\documentclass[12pt,a4paper]{amsart}
\setcounter{tocdepth}{2}
\usepackage{amsfonts}
\usepackage{amsthm}
\usepackage{amsmath}
\usepackage{amssymb}
\usepackage{mathabx}
\usepackage{bm}
\usepackage{amscd}
\usepackage[latin2]{inputenc}
\usepackage{t1enc}
\usepackage[mathscr]{eucal}
\usepackage{indentfirst}
\usepackage{graphicx}
\usepackage{graphics}
\usepackage{pict2e}
\usepackage{epic}
\numberwithin{equation}{section}
\usepackage[margin=2.9cm]{geometry}
\usepackage{epstopdf} 
\usepackage[colorlinks,linkcolor=blue]{hyperref}
\usepackage[capitalise,noabbrev]{cleveref}
\usepackage{todonotes}

\crefformat{equation}{(#2#1#3)}
\crefrangeformat{equation}{(#3#1#4) to~(#5#2#6)}

\setlength{\marginparwidth}{2.5cm}
\usepackage[normalem]{ulem}

\allowdisplaybreaks

\theoremstyle{plain}
\newtheorem{Thm}{Theorem}[section]
\newtheorem*{Thm*}{Theorem}
\newtheorem{Lem}[Thm]{Lemma}

\newtheorem{Prop}[Thm]{Proposition}

\theoremstyle{definition}

\newtheorem{Rem}[Thm]{Remark}
\newtheorem{?}[Thm]{Problem}

\newcommand{\ovl}{\overline}
\newcommand{\p}{\partial}
\newcommand{\R}{\mathbb{R}}
\newcommand{\e}{\varepsilon}

\newcommand{\ovlul}{\ovl{u}_l}
\newcommand{\ovlur}{\ovl{u}_r}

\let\oldtocsection=\tocsection

\let\oldtocsubsection=\tocsubsection

\let\oldtocsubsubsection=\tocsubsubsection

\renewcommand{\tocsection}[2]{\hspace{0em}\oldtocsection{#1}{#2}}
\renewcommand{\tocsubsection}[2]{\hspace{1em}\oldtocsubsection{#1}{#2}}
\renewcommand{\tocsubsubsection}[2]{\hspace{2em}\oldtocsubsubsection{#1}{#2}}

\begin{document}


\title[Shock profiles and rarefaction waves under periodic perturbations]{Asymptotic stability of shock profiles and rarefaction waves under periodic perturbations for 1-d convex scalar viscous conservation laws}

\author[Z. Xin]{Zhouping XIN}
\thanks{This research is partially supported by Zheng Ge Ru Foundation, Hong Kong RGC Earmarked Research Grants, CUHK-14300917, CUHK-14305315, and CUHK-14302917. }
\address[Z. Xin]{The Institute of Mathematical Sciences \&  Department of Mathematics, The Chinese University of Hong Kong, Shatin, N.T., Hong Kong}
\email{zpxin@ims.cuhk.edu.hk}

\author[Q. Yuan]{Qian YUAN}
\address[Q. Yuan]{The Institute of Mathematical Sciences \&  Department of Mathematics, The Chinese University of Hong Kong, Shatin, N.T., Hong Kong}
\email{qyuan103@link.cuhk.edu.hk}

\author[Y. Yuan]{Yuan YUAN}
\thanks{The research of Yuan Yuan is also supported by the Start-up Research Grant of South China Normal University (8S0328). }
\address[Y. Yuan]{South China Research Center for Applied Mathematics and Interdisciplinary Studies, South China Normal University, Guangzhou, Guangdong, China}
\email{yyuan2102@m.scnu.edu.cn}


\maketitle

\begin{abstract} 
	This paper studies the asymptotic stability of shock profiles and rarefaction waves under space-periodic perturbations for one-dimensional convex scalar viscous conservation laws. For the shock profile, we show that the solution approaches the background shock profile with a constant shift in the $ L^\infty(\R) $ norm at exponential rates. 
	The new phenomena contrasting to the case of localized perturbations is that the constant shift cannot be determined by the initial excessive mass in general, which indicates that the periodic oscillations at infinities make contributions to this shift. And the vanishing viscosity limit for the shift is also shown. 
	The key elements of the poof consist of the construction of an ansatz which tends to two periodic solutions as $ x \rightarrow \pm\infty, $ respectively, and the anti-derivative variable argument, and an elaborate use of the maximum principle.
	For the rarefaction wave, we also show the stability in the $ L^\infty(\R) $ norm.
\end{abstract}

\tableofcontents

\section{Introduction and main results}
We consider the Cauchy problem for one-dimensional convex scalar viscous conservation laws
\begin{align}
\p_t u^\nu + \p_x f(u^\nu) & = \nu \p_x^2 u^\nu, \qquad x \in \R, ~t>0, \label{CL} \\
u^\nu(x,0) & = u_0(x), \qquad x\in\R, \label{ic}
\end{align}
where $ u^\nu(x,t) \in \R $ is the unknown, the flux $f$ is smooth and strictly convex, and $ \nu\in (0,1] $ denotes the viscosity. 
This paper is concerned with the asymptotic behavior of the solution $ u^\nu(x,t) $ to \eqref{CL}, \eqref{ic} with $ u_0(x) \in L^\infty(\R) $ satisfying
\begin{equation}\label{ic-2-per}
\begin{cases}
|u_0(x) - \ovlul - w_{0l}(x)| \leq C_0 e^{\beta_0 x}, &  \\
|u_0(x) - \ovlur - w_{0r}(x)| \leq C_0 e^{-\beta_0 x}, & 
\end{cases} \quad x\in\R,
\end{equation}
where $ \ovlul, \ovlur, C_0>0 $ and $ \beta_0>0 $ are constants, and $w_{0l}, w_{0r} \in L^\infty(\R)$ are two arbitrary periodic functions.

It is well known from \cite{Oleinik1957,Kruzkov1970} that for $ \nu>0, $ the equation \eqref{CL} generates a semi-group $ \{S_t^\nu: L^\infty(\R) \rightarrow L^\infty(\R); t\geq 0 \} $ to ensure that, for any initial data $u_0\in L^{\infty}(\R)$, the function $ u^\nu(x,t) := S_t^\nu u_0 $ is the unique bounded solution to \eqref{CL}, \cref{ic}, which is smooth for $ t>0, $ and satisfies the initial condition in the weak sense: for any continuous functions $\varphi(x,t)$ compactly supported in $\R  \times [0, +\infty), $ there holds
\begin{equation*}
\int_{\R}[ \varphi(x,t)u^\nu(x,t)-\varphi(x,0)u_0(x)]\ dx \rightarrow 0 \quad\text{as}~t\rightarrow0.
\end{equation*}
The semi-group $ S_t^\nu $ satisfies the following classical Co-properties:
\begin{itemize}
	\item (Comparison) If $u_0, v_0 \in L^{\infty}(\R)$ and $u_0\leq v_0$ almost everywhere, then $S_t^\nu u_0\leq S_t^\nu v_0$ for any $ x \in \R, ~ t>0. $
	
	\item (Contraction) If $u_0, v_0 \in L^{\infty}(\R)$ and $u_0-v_0\in L^{1}(\R)$, then $S_t^\nu u_0- S_t^\nu v_0\in L^{1}(\R)$ and 
	$\|S_t^\nu u_0 - S_t^\nu v_0\|_{L^1(\R)} $ is non-increasing with respect to $ t. $
	
	\item (Conservation) If $u_0, v_0 \in L^{\infty}(\R)$ and $u_0-v_0\in L^{1}(\R)$, then 
	$$\int_{\R} (S_t^\nu u_0-S_t^\nu v_0) dx=\int_{\R}(u_0-v_0) dx  \qquad \forall ~t \geq 0. $$
\end{itemize}
Moreover, there exists a constant $ E>0, $ depending only on $ f$ and $ \|u_0\|_{L^\infty}, $ such that
\begin{equation}\label{entropy}
\partial_x u^\nu(x,t) \leq \frac{E}{t}  \qquad \forall x\in \R,~ t>0.
\end{equation}

Shocks and rarefaction waves are most important nonlinear solutions to conservation laws. A viscous shock profile $ \phi^\nu(x-st) $ is a classical traveling wave solution to the viscous conservation law \eqref{CL}, solving the problem:
\begin{equation}\label{ode1}
\begin{cases}
& \nu (\phi^\nu)'' = f'(\phi^\nu)(\phi^\nu)'-s(\phi^\nu)',\\
& \lim\limits_{x\rightarrow -\infty}\phi^\nu(x)=\ovlul,~ \lim\limits_{x\rightarrow +\infty}\phi^\nu(x)=\ovlur,
\end{cases}
\end{equation}
where $ \ovlul>\ovlur $ and $ s $ is the shock speed defined by the Rankine-Hugoniot condition: 
\begin{equation*}
s =\dfrac{f(\ovlul)-f(\ovlur)}{\ovlul-\ovlur}.
\end{equation*}
The existence of the shock profile follows from a simple phase plane analysis, and can also follow from the center-manifold theorem in Kopell-Howard \cite{Kopell1975}. 
For $ \ovlul<\ovlur, $ a centered rarefaction wave
\begin{equation*}
u^R(x,t)= 
\begin{cases}
\ovl{u}_l, &\quad \frac{x}{t} < f'(\ovl{u}_l), \\
(f')^{-1}(\frac{x}{t}), &\quad f'(\ovl{u}_l)\leq \frac{x}{t}\leq f'(\ovl{u}_r), \\
\ovl{u}_r, &\quad \frac{x}{t}> f'(\ovl{u}_r),
\end{cases}
\end{equation*}	
is an entropy weak solution to the Riemann problem for the inviscid conservation law \cref{CL} with $ \nu=0. $

\vspace{0.3cm}

When the initial data $ u_0 $ approaches constant states as $ |x|\rightarrow+\infty, $ i.e. $ w_{0l}=w_{0r} = 0 $ in \cref{ic-2-per}, the asymptotic behaviors of solutions to \cref{CL}, \cref{ic} have been studied widely so far. The pioneering work of Hopf \cite{Hopf1950} showed the $ L^\infty $ stability of constants with a decay rate $ 1/\sqrt{t}, $ by using an explicit representation of the solution to the Burgers' equation.
Later, Il'in-Ole\v{\i}nik \cite{Oleinik1960} applied the maximum principle on the anti-derivative variables to prove the $ L^\infty $ stability of constants, shocks and rarefaction waves for general convex conservation laws. Moreover, Freist{\"{u}}hler-Serre \cite{Freistuhler1998} combined a lap number argument and maximum principle to prove the $ L^1 $ stability of viscous shock profiles. 
For more results, we refer to \cite{Nishihara1985,Harabetian1988,Howard1999} for the one-dimensional scalar case, \cite{Goodman1989,Xin1990,Hoff2000, Hoff2002,Kenig2006} for the multi-dimensional scalar case, and \cite{Matsumura1985,Goodman1986,Liu1985, Liu1997,Xin1988,Liu1988,Szepessy1993} for the important one-dimensional system case.

For the initial data which keeps oscillating at infinities, Lax \cite{Lax1957} was the first one to study the periodic data. He showed that the entropy periodic solutions to inviscid scalar conservation laws approach their constant averages in the $ L^\infty $ norm at algebraic rates. And then Glimm-Lax \cite{Glimm1970} and Dafermos \cite{Dafermos1995} used the generalized characteristics to extend the results to some $ 2\times 2 $ systems.
Besides the constants, 
Xin-Yuan-Yuan \cite{Xin2019} and Yuan-Yuan \cite{Yuan2019} proved the $ L^\infty $ stability of shocks and rarefaction waves under periodic perturbations for the scalar inviscid conservation laws, by using the generalized characteristics.  
One can also see  \cite{Dalibard2010,Dalibard2017} for the $ L^1 $ stability of the stationary viscous `` shock profile '' which connects two periodic functions as end states, where the flux in the equation \cref{CL} is $ f(x,u) $ that is periodic with respect to $ x. $
 
In this paper, we deal with the initial data \eqref{ic-2-per}, where $ w_{0l}, w_{0r} $ are arbitrary periodic functions, in order to see how the initial oscillations at infinities influence the stability of these two nonlinear waves in the viscous case.
Throughout this paper, we let $ \phi(x) $ denote any fixed shock profile solving \cref{ode1} with $ \nu =1, $ and then for any $ \nu>0, $ we define
\begin{equation}\label{def-phi-nu}
\phi^\nu(x) := \phi\left( \frac{x}{\nu}\right).
\end{equation}
Then $\phi^\nu$ solves \cref{ode1} for any $ \nu>0 $ and tends to the inviscid Lax-shock
\begin{equation*}
u^S(x,t) = \begin{cases}
\ovlul,  &\quad  x<st, \\
\ovlur,  &\quad  x>st,
\end{cases}
\end{equation*}
almost everywhere as the viscosity $ \nu\rightarrow 0+. $
With the constants $ \ovlul, \ovlur $ and periodic perturbations $ w_{0l}, w_{0r} \in L^\infty(\R) $ in \cref{ic-2-per}, we let $ u_l^\nu,$ $ u_r^\nu $ denote the periodic solutions to \cref{CL} with the respective initial data
\begin{equation}\label{def-ulur-2}
u_l^\nu(x,0) =\ovlul+w_{0l}(x), \quad u_r^\nu(x,0) =\ovlur+w_{0r}(x).
\end{equation}

\vspace{0.3cm}

Now the main results of this paper are stated as follows:

\begin{Thm}\label{Thm-2-periodic}
	Assume that $ \nu>0 $ and the initial data $ u_0\in L^\infty(\R) $ satisfies \cref{ic-2-per} with $ \ovlul>\ovlur $ and the periodic functions $ w_{0l}, w_{0r} \in L^\infty $ with the respective periods $ p_l, p_r>0, $ satisfying
	\begin{equation}\label{zero-ave-2-period}
	\frac{1}{p_l} \int_0^{p_l} w_{0l}(x) dx = \frac{1}{p_r} \int_0^{p_r} w_{0r}(x) dx = 0.
	\end{equation}
	Then the unique bounded solution $ u^\nu $ to \cref{CL}, \cref{ic} satisfies 
	\begin{equation}\label{ineq-Thm-shock}
	\sup_{x\in\R} \left| u^\nu(x,t) - \phi^\nu(x-st-X_\infty^\nu) \right| \leq Ce^{-\mu t}, \quad t>0,
	\end{equation}
	with the constant shift $ X_\infty^\nu = \frac{1}{\ovlul-\ovlur} \left( X_{\infty,1}^\nu + X_{\infty,2}^\nu\right) $ given by
	\begin{align}
	X_{\infty,1}^\nu = & \int_{-\infty}^{0} (u_0-\phi^\nu-w_{0l})(x)dx + \int_{0}^{+\infty} (u_0-\phi^\nu-w_{0r})(x) dx, \label{shift-local-2-periodic}\\
	X_{\infty,2}^\nu = & \int_{0}^{+\infty}\frac{1}{p_l}\int_{0}^{p_l} [f(u_l^\nu(x,t))-f(\ovlul)] dxdt -\frac{1}{p_l}\int_{0}^{p_l}\int_{0}^{x} w_{0l}(y) dydx \label{def-X-2} \\
	&  - \int_{0}^{+\infty}\frac{1}{p_r} \int_{0}^{p_r} [f(u_r^\nu(x,t)) -f(\ovlur)] dxdt+\frac{1}{p_r}\int_{0}^{p_r} \int_{0}^{x} w_{0r}(y) dydx. \notag
	\end{align}
\end{Thm}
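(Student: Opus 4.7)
The plan is to combine the classical anti-derivative approach of Il'in--Ole\v{\i}nik with an ansatz that, unlike in the localized case, interpolates between the two background periodic solutions $u_l^\nu(x,t)$ and $u_r^\nu(x,t)$ through a correctly shifted viscous shock profile. The shift $X_\infty^\nu$ is forced upon us by conservation: once the ansatz is fixed, $X_\infty^\nu$ must be chosen so that $u^\nu-U^\nu$ has zero total mass for every $t\geq 0$, and tracking this condition from $t=0$ back through the flux balance at both infinities yields precisely the formula $X_\infty^\nu=\tfrac{1}{\ovlul-\ovlur}(X_{\infty,1}^\nu+X_{\infty,2}^\nu)$.

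First I would prove a preliminary decay estimate for the periodic solutions, namely $|u_l^\nu(x,t)-\ovlul|+|u_r^\nu(x,t)-\ovlur|\leq Ce^{-\mu t}$ together with analogous decay for $\partial_x u_l^\nu,\partial_x u_r^\nu$. This follows from the $L^\infty$ stability of constants for periodic data (an argument in the spirit of Lax \cite{Lax1957} combined with the parabolic smoothing and the Oleinik-type estimate \eqref{entropy}), and it guarantees that the time integrals in \eqref{def-X-2} converge absolutely, making $X_{\infty,2}^\nu$ well defined. Next I would define the ansatz
\begin{equation*}
U^\nu(x,t) = \phi^\nu(x-st-X_\infty^\nu) + (u_l^\nu(x,t)-\ovlul)\,(1-\chi(x-st)) + (u_r^\nu(x,t)-\ovlur)\,\chi(x-st),
\end{equation*}
where $\chi$ is a smooth cut-off jumping from $0$ to $1$ across a moving window. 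The exponential tails of $\phi^\nu$ and the bound \eqref{ic-2-per} then ensure $u^\nu(\cdot,t)-U^\nu(\cdot,t)\in L^1(\R)$ for $t=0$, hence, by the contraction and conservation properties of $S_t^\nu$ applied to $u^\nu$ and a modified evolution of $U^\nu$, for all $t\geq 0$.

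The core of the argument is the anti-derivative variable $\Phi(x,t):=\int_{-\infty}^x(u^\nu-U^\nu)(y,t)\,dy$. Differentiating the equations satisfied by $u^\nu, u_l^\nu, u_r^\nu$ and integrating in $x$, one obtains an equation of the form
\begin{equation*}
\partial_t \Phi + a^\nu(x,t)\,\partial_x\Phi - \nu\,\partial_x^2\Phi = -\tfrac{1}{2}f''(\tilde u)\,(\partial_x\Phi)^2 + R^\nu(x,t),
\end{equation*}
with $a^\nu\approx f'(U^\nu)$ and a remainder $R^\nu$ coming from the fact that $U^\nu$ does not solve \eqref{CL} exactly; the remainder is supported essentially in the transition region of $\chi$ and decays exponentially in $t$ thanks to the preliminary decay of $u_l^\nu-\ovlul$ and $u_r^\nu-\ovlur$. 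Here the choice of $X_\infty^\nu$ is exactly what forces $\Phi(\pm\infty,t)=0$ for all $t\geq 0$, i.e., exactly what closes the boundary conditions needed to apply the maximum principle.

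The final step is to run a two-stage comparison argument on $\Phi$. Because the background shock profile is compressive (so $a^\nu$ has the stabilizing sign relative to the shock position and $f''>0$), one can construct explicit exponentially decaying super- and sub-solutions $\pm C e^{-\mu t}$ that dominate $R^\nu$ and the quadratic term (small by a bootstrap, once a preliminary $L^\infty$ smallness of $\Phi$ is established). Applying the comparison principle gives $\|\Phi(\cdot,t)\|_{L^\infty}\leq Ce^{-\mu t}$, and interpolating with the parabolic gradient estimate on $\partial_x\Phi=u^\nu-U^\nu$ then yields \eqref{ineq-Thm-shock}. The main obstacle I anticipate is the maximum-principle step: the coefficient $a^\nu$ is time-dependent and not monotone, the remainder $R^\nu$ is not integrable in a naive norm, and the quadratic term requires a smallness bootstrap, so careful barrier constructions (probably pieced together separately on the regions $x<st+X_\infty^\nu$ and $x>st+X_\infty^\nu$, exploiting the one-sided exponential decay of $\phi^\nu-\ovlul$ and $\phi^\nu-\ovlur$) will be needed to absorb $R^\nu$ uniformly and to produce the exponential rate $\mu$ independent of $\nu$.
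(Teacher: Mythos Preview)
Your overall architecture—interpolating ansatz, anti-derivative variable, Il'in--Ole\v{\i}nik maximum principle—matches the paper's, but the claim that a \emph{constant} shift $X_\infty^\nu$ forces $\Phi(\pm\infty,t)=0$ for all $t\geq 0$ is wrong, and this is the heart of the matter. Since your $U^\nu$ does not solve \eqref{CL}, one has $\tfrac{d}{dt}\int_\R(u^\nu-U^\nu)\,dx=-\int_\R h\,dx$ where $h$ is the residual of $U^\nu$, and with a fixed shift this integral does not vanish identically: the formula for $X_{\infty,2}^\nu$ contains $\int_0^\infty(\cdots)\,dt$, an \emph{accumulated} flux contribution, so at any finite time the ``correct'' shift is not yet $X_\infty^\nu$. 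The paper resolves this by making the shift time-dependent: $X^\nu(t)$ solves the ODE \eqref{ode-shift} designed precisely so that $\int_\R h_{X^\nu}^\nu\,dx=0$, which keeps the total mass of $u^\nu-\psi_{X^\nu}^\nu$ equal to zero for $t\geq T_0$. The limit $X^\nu(t)-st\to X_\infty^\nu$ is then a \emph{consequence} (\cref{Prop-shift}), and deriving the explicit formula for $X_\infty^\nu$ requires a nontrivial divergence-theorem computation on moving periodic domains together with an extra averaging in the period variable (Section~\ref{sec-shift-infinity}) that your ``tracking the flux balance'' sentence does not cover.

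Two further points where your sketch diverges from what actually works. First, the interpolation weight in the paper is not a generic cutoff $\chi$ but the normalized profile $g^\nu=(\phi^\nu-\ovlur)/(\ovlul-\ovlur)$ itself; this matters because the identity $\nu(g^\nu)''=(f'(\phi^\nu)-s)(g^\nu)'$ is what makes the anti-derivative of the residual decay exponentially in both $|x-X^\nu(t)|$ and $t$ (\cref{prop-U}), and a generic $\chi$ would not produce this cancellation. Second, the maximum-principle step requires a preliminary \emph{time-independent} estimate showing $u^\nu\to u_l^\nu$ (resp.\ $u_r^\nu$) uniformly for $x-st\ll 0$ (resp.\ $x-st\gg 0$) and all large $t$ (\cref{prop-unidecay}), itself obtained by building explicit super- and sub-solutions; this is what fixes the sign of the drift $\ovl{a}-(X^\nu)'$ outside a bounded window and lets the Il'in--Ole\v{\i}nik auxiliary function $\Theta$ succeed. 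Your proposal does not identify this step, and the ``bootstrap on the quadratic term'' you anticipate is not how the argument runs—the equation \eqref{equ-U2} for the anti-derivative is linear in $\partial_x\Phi$ once written with the integrated coefficient $a(u^\nu,\psi_{X^\nu}^\nu)$, so no smallness bootstrap is needed.
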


\vspace{0.3cm}

\cref{Thm-2-periodic} shows that in contrast to the case of localized perturbations, besides the localized part of the initial perturbation, the periodic oscillations at infinities generate another shift $ X_{\infty,2}^\nu $ to the background viscous shock profile. The next theorem shows that this shift is non-zero in general even in the case the periods of the perturbations at $ x=\pm \infty $ are the same. 

\begin{Thm}\label{Thm-1-periodic}
	Under the assumptions of \cref{Thm-2-periodic}, if $ w_{0l}=w_{0r} = w_0, $
	the constant $ X_{\infty,2}^\nu $ defined in \cref{def-X-2} may be non-zero in general.
	More precisely, 
	\begin{enumerate}
		\item[(1) ] for the Burgers' equation, i.e. $ f(u) = u^2/2, $ it holds that $X_{\infty,2}^\nu=0;$ 
	
		\item[(2) ] for any periodic perturbation $w_0$ with zero average, if $$ 0<\|w_0\|_{L^{\infty}(\R)}< (\ovlul-\ovlur)/2, $$ 
		there exists a smooth and strictly convex flux $f$ such that $ X_{\infty,2}^\nu \neq 0. $ 
	\end{enumerate}
\end{Thm}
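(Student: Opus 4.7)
For Part~(1), the plan is to exploit the Galilean invariance of viscous Burgers. Taking $w_{0l}=w_{0r}=w_0$ and $p_l=p_r=:p$, the two boundary terms $\pm\tfrac1p\int_0^p\!\int_0^x w_0(y)\,dy\,dx$ in \cref{def-X-2} cancel immediately. Write $v_l:=u_l^\nu-\ovlul$ and $v_r:=u_r^\nu-\ovlur$. For $f(u)=u^2/2$, the identity $f(\bar u+v)-f(\bar u)=\tfrac12 v^2+\bar u\,v$ together with the zero-mean property $\int_0^p v_l\,dx=\int_0^p v_r\,dx=0$ reduces the remaining terms to $\tfrac12\int_0^p(v_l^2-v_r^2)\,dx$. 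The Galilean transformation $u(x,t)\mapsto u(x-ct,t)+c$ preserves Burgers solutions, and matching initial data with $c=\ovlul-\ovlur$ forces $v_l(x,t)=v_r(x-(\ovlul-\ovlur)t,\,t)$ by uniqueness of the periodic Cauchy problem; since this is a pure spatial translation, $p$-periodicity gives $\int_0^p v_l^2\,dx=\int_0^p v_r^2\,dx$ for every $t$, whence $X_{\infty,2}^\nu=0$.

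For Part~(2), the plan is to construct a strictly convex $f$ whose second derivative takes distinct constant values on the invariant ranges of $u_l^\nu$ and $u_r^\nu$, so that each periodic problem reduces, after a Galilean shift and rescaling, to standard viscous Burgers with rescaled data. The hypothesis $\|w_0\|_{L^\infty}<(\ovlul-\ovlur)/2$ lets us fix a small $\delta>0$ so that
\begin{equation*}
N_l:=[\ovlul-\|w_0\|_{L^\infty}-\delta,\,\ovlul+\|w_0\|_{L^\infty}+\delta],\quad
N_r:=[\ovlur-\|w_0\|_{L^\infty}-\delta,\,\ovlur+\|w_0\|_{L^\infty}+\delta]
\end{equation*}
are disjoint. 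For parameters $a,b>0$, we construct a smooth strictly convex $f_{a,b}$ with $f_{a,b}''\equiv a$ on $N_l$, $f_{a,b}''\equiv b$ on $N_r$, and strictly positive smooth interpolation elsewhere. By the maximum principle applied to the periodic Cauchy problem, $u_l^\nu(\cdot,t)\subset N_l$ and $u_r^\nu(\cdot,t)\subset N_r$ for all $t\geq 0$, so on those sets $f_{a,b}$ is affine-plus-quadratic. A Galilean shift $\xi=x-f_{a,b}'(\ovlul)\,t$ followed by the rescaling $V_a:=a\,v_l$ turns the periodic problem for $u_l^\nu$ into viscous Burgers with viscosity $\nu$ and initial datum $a\,w_0$; analogously $V_b:=b\,v_r$ solves viscous Burgers with data $b\,w_0$. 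A direct computation then yields
\begin{equation*}
X_{\infty,2}^\nu\;=\;\tfrac12\bigl(\psi(a)-\psi(b)\bigr),\qquad
\psi(c)\;:=\;\frac{1}{c}\int_0^\infty\!\frac{1}{p}\int_0^p V_c(\xi,t)^2\,d\xi\,dt,
\end{equation*}
where $V_c$ is the viscous Burgers solution with data $c\,w_0$.

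It remains to show that $\psi$ is non-constant on $(0,\infty)$. Setting $W_c:=V_c/c$, one has $W_c(\cdot,0)=w_0$ and $\partial_t W_c+c\,W_c\,\partial_\xi W_c=\nu\,\partial_\xi^2 W_c$; the $L^2$ energy identity combined with the Poincar\'e inequality for zero-mean $p$-periodic functions gives the $c$-uniform exponential decay $\|W_c(\cdot,t)\|_{L^2(0,p)}\leq e^{-\lambda t}\|w_0\|_{L^2(0,p)}$ with $\lambda=2\nu(2\pi/p)^2$. As $c\to 0^+$ the nonlinear term drops out and $W_c\to w^{\mathrm{heat}}$ (the heat-equation solution with data $w_0$), so dominated convergence yields $\psi(c)=c\int_0^\infty\!\int_0^p W_c^2/p\,d\xi\,dt\to 0$, whereas $\psi(1)>0$ since $w_0\not\equiv 0$ forces $V_1\not\equiv 0$. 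Hence $\psi$ is non-constant, and for any $a,b>0$ with $\psi(a)\neq\psi(b)$ the corresponding flux $f_{a,b}$ gives $X_{\infty,2}^\nu\neq 0$. The main technical obstacle is to couple the vanishing-nonlinearity limit with the long-time behavior uniformly in $c$ so as to pass to the limit under the improper time integral; the uniform $L^2$ exponential decay together with standard parabolic smoothing handles this.
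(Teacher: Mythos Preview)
Your proof is correct and follows essentially the same approach as the paper. Part~(1) is identical: the paper uses the same Galilean relation $w_l^\nu(x,t)=w_r^\nu(x-(\ovlul-\ovlur)t,t)$ and the same reduction to $\int_0^p\bigl((w_l^\nu)^2-(w_r^\nu)^2\bigr)dx=0$. For Part~(2) the paper makes the specific choice $f''\equiv 1$ on the invariant range of $u_l^\nu$ and $f''\equiv 1/n$ on that of $u_r^\nu$, then sends $n\to\infty$; your function $\psi(c)$ with $a=1$ and $b\to 0^+$ is exactly this argument in different notation, and both rest on the same key fact---the flux-independent exponential $L^2$ decay $\|W_c(\cdot,t)\|_{L^2(0,p)}\le e^{-\lambda t}\|w_0\|_{L^2(0,p)}$ coming from the periodic Poincar\'e inequality (this is the paper's estimate \cref{app2}). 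One small remark: your digression about $W_c\to w^{\mathrm{heat}}$ and dominated convergence is unnecessary, since the uniform bound $\int_0^\infty\|W_c\|_{L^2}^2\,dt\le \|w_0\|_{L^2}^2/(2\lambda)$ already gives $\psi(c)\le Cc\to 0$ directly.
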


However, for the inviscid conservation law \cref{CL} with $ \nu=0, $ it is shown in \cite{Xin2019} that if $ w_{0l} = w_{0r}, $ the entropy solution $ u^0 $ tends to the background shock $ u^S $ with no shift, i.e. $ X_{\infty,2}^0 = 0 $ in this case.  The vanishing viscosity limit for the shift $ X_{\infty,2}^\nu $ is presented in the next theorem,  which agrees with the results in \cite{Xin2019,Yuan2019}.

\begin{Thm}\label{Thm-vanishing}
	Under the assumptions of \cref{Thm-2-periodic}, as the viscosity $ \nu\rightarrow 0+, $
	\begin{equation}\label{vanishing limit}
	X_{\infty,2}^\nu \rightarrow X_{\infty,2}^0 := - \min_{x\in\R} \int_{0}^{x} w_{0l}(y)dy + \min_{x\in\R} \int_{0}^{x} w_{0r}(y)dy.
	\end{equation}
	Furthermore, if both $ w_{0l} $ and $ w_{0r} $ have bounded total variations on their respective periodic domains, there exists a constant $ C>0, $ independent of the viscosity $ \nu, $ such that
	\begin{equation}\label{rate-2}
	\left| X_{\infty,2}^\nu - X_{\infty,2}^0 \right| \leq C \nu^{1/5}.
	\end{equation}
\end{Thm}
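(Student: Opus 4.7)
The plan is to recast $X_{\infty,2}^\nu$ as a difference of long-time spatial averages of Hamilton--Jacobi anti-derivatives, and then compare these averages with those of the inviscid problem via a three-term decomposition.

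\textbf{Step 1 (anti-derivative reformulation).} For each $* \in \{l,r\}$, introduce the anti-derivative
\[
V_*^\nu(x,t) := \int_0^x \big(u_*^\nu(y,t) - \ovl{u}_*\big)\,dy + \int_0^t \big[\nu \partial_x u_*^\nu(0,\tau) - (f(u_*^\nu(0,\tau)) - f(\ovl{u}_*))\big]\,d\tau.
\]
A direct compatibility check shows $\partial_x V_*^\nu = u_*^\nu - \ovl{u}_*$ and that $V_*^\nu$ solves the viscous Hamilton--Jacobi equation
\[
\partial_t V_*^\nu + f(\partial_x V_*^\nu + \ovl{u}_*) - f(\ovl{u}_*) = \nu \partial_x^2 V_*^\nu, \qquad V_*^\nu(x,0) = \int_0^x w_{0*}(y)\,dy.
\]
By the zero-mean assumption \cref{zero-ave-2-period}, $V_*^\nu$ is $p_*$-periodic in $x$, so its spatial average $\bar V_*^\nu(t) := p_*^{-1}\int_0^{p_*} V_*^\nu(x,t)\,dx$ obeys
\[
\frac{d}{dt}\bar V_*^\nu(t) = -\frac{1}{p_*}\int_0^{p_*} \tilde H_*(u_*^\nu - \ovl{u}_*)\,dx \leq 0, \quad \tilde H_*(v) := f(v+\ovl{u}_*) - f(\ovl{u}_*) - f'(\ovl{u}_*)v,
\]
with $\tilde H_* \geq 0$ by the strict convexity of $f$ and the linear Taylor term killed by zero-mean conservation. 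Combined with the exponential $L^\infty$-decay of periodic solutions $u_*^\nu \to \ovl{u}_*$, the limit $\bar V_*^\nu(\infty) := \lim_{t\to\infty}\bar V_*^\nu(t)$ exists. Substituting back into \cref{def-X-2} yields $X_{\infty,2}^\nu = -\bar V_l^\nu(\infty) + \bar V_r^\nu(\infty)$, so the theorem reduces to showing $\bar V_*^\nu(\infty) \to m_* := \min_x \int_0^x w_{0*}\,dy$ as $\nu \to 0+$.

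\textbf{Step 2 (three-term decomposition and rate).} The lower bound $\bar V_*^\nu(\infty) \geq m_*$ is immediate by the parabolic comparison principle against the constant sub-solution $V\equiv m_*$. For the upper bound, fix $T>0$ and use the monotonicity from Step~1:
\[
\bar V_*^\nu(\infty) - m_* \leq [\bar V_*^\nu(T) - \bar V_*^0(T)] + [\bar V_*^0(T) - m_*],
\]
where $V_*^0$ is the inviscid viscosity solution with the same initial data. The inviscid term is controlled by the Hopf--Lax formula: since the Legendre transform of $\tilde H_*$ has its unique zero at the characteristic speed $f'(\ovl{u}_*)$ and $V_*^0(\cdot,0)$ is $p_*$-periodic, the infimum picks up $m_*$ with error $O(p_*^2/T)$. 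The viscous--inviscid term is handled by differentiation: using $|\tilde H_*(v_1) - \tilde H_*(v_2)| \leq C\max(|v_1|,|v_2|)|v_1-v_2|$ together with the Oleinik estimate \cref{entropy}, which forces the $\nu$-uniform bound $\|u_*^\nu(\cdot,t)-\ovl{u}_*\|_{L^\infty} \leq Cp_*/t$ (from $\partial_x u_*^\nu \leq E/t$ and zero mean over one period), one obtains
\[
\left|\frac{d}{dt}[\bar V_*^\nu - \bar V_*^0](t)\right| \leq \frac{C}{t}\,\|u_*^\nu(t)-u_*^0(t)\|_{L^1([0,p_*])},
\]
which under the BV hypothesis combined with Kruzkov's vanishing-viscosity estimate $\|u_*^\nu - u_*^0\|_{L^1} \leq C\sqrt{\nu t}$ yields $|\bar V_*^\nu(T)-\bar V_*^0(T)| \leq C\sqrt{\nu T}$. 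Balancing $\sqrt{\nu T}$, $1/T$, and the residual monotone tail $\bar V_*^\nu(T)-\bar V_*^\nu(\infty) \leq C/T$ (again from the Oleinik bound applied to the tail of the decay integral) produces the rate $\nu^{1/5}$ after optimizing $T$. Without BV, approximate $w_{0*}$ by BV periodic data and pass to the limit using the $L^1$-contraction of the semi-group together with the continuity of $m_*$ in the initial data, giving the unconditional limit \cref{vanishing limit}.

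\textbf{Main obstacle.} The principal technical difficulty is the simultaneous control \emph{uniformly in $\nu$} of three distinct error sources — the residual monotone decay $\bar V_*^\nu(T)-\bar V_*^\nu(\infty)$, the viscous--inviscid gap $\bar V_*^\nu(T)-\bar V_*^0(T)$, and the inviscid relaxation $\bar V_*^0(T)-m_*$ — and their careful balance against one another to produce the sharp exponent $\nu^{1/5}$. The Oleinik one-sided estimate \cref{entropy} is essential as the only tool providing a $\nu$-independent decay rate of $u_*^\nu - \ovl{u}_*$, yielding simultaneously the required uniform control of the monotone tail and of the Lipschitz behaviour of $\tilde H_*(u_*^\nu - \ovl{u}_*)$ that enters the Kruzkov differentiation step.
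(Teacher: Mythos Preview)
Your approach is correct and genuinely different from the paper's. The paper works directly with the flux integrals in \cref{def-X-2}: it uses the $\nu$-uniform Ole\v{\i}nik bound $\|u_*^\nu-\ovl{u}_*\|_{L^\infty}\le C/(1+t)$ to get $\int_0^{p_*}[f(u_*^\nu)-f(\ovl{u}_*)]\,dx\le C/(1+t)^2$ uniformly in $\nu$, applies dominated convergence to pass to $\nu=0$, and then identifies the inviscid limit by integrating the equation for $u_*^0$ over a characteristic parallelogram (Dafermos' result that $u_*^0\equiv\ovl{u}_*$ along the line through the minimiser of the anti-derivative). For the rate, the paper bounds $\int_0^T\!\!\int_0^{p_*}|f(u_*^\nu)-f(u_*^0)|$ by $C\int_0^T\sqrt{\nu t}\,dt=C\nu^{1/2}T^{3/2}$ (no $1/t$ factor) and the tail by $C/T$, and the balance $T=\nu^{-1/5}$ gives the stated exponent. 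Your Hamilton--Jacobi reformulation replaces the characteristics argument by the one-line comparison $V_*^\nu\ge m_*$ and the Hopf--Lax upper bound, which is conceptually cleaner; and your use of the extra factor $\max(|v_1|,|v_2|)\le C/t$ in the Lipschitz estimate for $\tilde H_*$ sharpens the viscous--inviscid term to $C\sqrt{\nu T}$ rather than $C\nu^{1/2}T^{3/2}$.

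One slip: balancing $\sqrt{\nu T}$ against $1/T$ (and the monotone tail, also $C/T$) gives $T=\nu^{-1/3}$ and hence the rate $\nu^{1/3}$, not $\nu^{1/5}$. So your argument in fact \emph{improves} on the exponent claimed in the theorem; the statement $|X_{\infty,2}^\nu-X_{\infty,2}^0|\le C\nu^{1/5}$ certainly follows, but you should record the correct optimisation. Finally, your treatment of the unconditional limit via BV approximation is workable but unnecessarily indirect: for fixed $T$ the convergence $\bar V_*^\nu(T)\to\bar V_*^0(T)$ already follows from dominated convergence (a.e.\ convergence of $u_*^\nu$ to $u_*^0$ plus the uniform $C/(1+t)^2$ bound on $\tilde H_*(u_*^\nu-\ovl{u}_*)$), and then $\limsup_{\nu\to0}\bar V_*^\nu(\infty)\le\bar V_*^0(T)\to m_*$ as $T\to\infty$, matching the lower bound from comparison.
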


\vspace{0.2cm}

At last, we state the result for rarefaction waves.

\begin{Thm}\label{Thm-rare}
	Assume that $ \nu>0 $ and the initial data $ u_0 $ satisfies \cref{ic-2-per} with $ \ovlul<\ovlur $ and the periodic functions $ w_{0l}, w_{0r} \in L^\infty $ with the respective periods $ p_l, p_r>0, $ satisfying \cref{zero-ave-2-period}. Then the unique bounded solution $ u^\nu $ to \eqref{CL}, \eqref{ic} satisfies
	$$
	\sup_{x\in\R} | u^\nu(x,t)-u^R(x,t) | \rightarrow 0 ~ \quad \text{as}~t\rightarrow \infty.
	$$
\end{Thm}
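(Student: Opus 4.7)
The plan is to reduce the periodic-perturbation setting to a near-classical one by constructing an ansatz that carries the non-decaying oscillations at $\pm\infty$, and then to apply the maximum principle on the antiderivative variable in the spirit of Il'in--Ole\v{\i}nik.

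First I would show that the two periodic solutions $u_l^\nu,u_r^\nu$ with zero-mean initial perturbations decay exponentially in $L^\infty(\R)$:
\[
\|u_l^\nu(\cdot,t)-\ovlul\|_{L^\infty(\R)}+\|u_r^\nu(\cdot,t)-\ovlur\|_{L^\infty(\R)}\le Ce^{-\mu t},\qquad t\ge 0.
\]
This is standard for scalar viscous conservation laws on a periodic cell: since the mean of $u_{l,r}^\nu-\ovl{u}_{l,r}$ is conserved (hence zero), the $L^2$ energy identity together with Poincar\'e's inequality gives exponential decay in $L^2$, which parabolic smoothing upgrades to $L^\infty$.

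Next I would construct an ansatz interpolating between $u_l^\nu$ at $-\infty$, a smoothed rarefaction profile $u^{R,\nu}$ in the middle, and $u_r^\nu$ at $+\infty$. Choose space-time cutoffs $\chi_l,\chi_r\in C^\infty$ supported outside the rarefaction fan, with $\chi_l\equiv 1$ for $x\le f'(\ovlul)t-\delta t$ and $\chi_l\equiv 0$ for $x\ge f'(\ovlul)t$, and symmetrically for $\chi_r$, and set
\[
U^\nu(x,t):=u^{R,\nu}(x,t)+\chi_l(x,t)\bigl(u_l^\nu(x,t)-\ovlul\bigr)+\chi_r(x,t)\bigl(u_r^\nu(x,t)-\ovlur\bigr),
\]
where $u^{R,\nu}$ is a $C^2$ smoothed version of $u^R$, e.g.\ a viscous self-similar rarefaction of Matsumura--Nishihara type. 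By the first step, $\|U^\nu(\cdot,t)-u^R(\cdot,t)\|_{L^\infty(\R)}\to 0$. Setting $v:=u^\nu-U^\nu$, the initial datum $v(\cdot,0)$ decays exponentially at $\pm\infty$ by \cref{ic-2-per}, so the antiderivative
\[
\Phi(x,t):=\int_{-\infty}^x v(y,t)\,dy
\]
is well defined and bounded, and solves
\[
\p_t\Phi+b(x,t)\,\p_x\Phi=\nu\,\p_x^2\Phi-\tilde R(x,t),
\]
with $b:=\int_0^1 f'(U^\nu+\theta v)\,d\theta$ and $\tilde R(x,t):=\int_{-\infty}^x\bigl[\p_t U^\nu+\p_x f(U^\nu)-\nu\p_x^2 U^\nu\bigr](y,t)\,dy$.

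I would then apply the maximum principle to $\Phi$ with barriers adapted to the spreading rarefaction fan. The residual $\tilde R$ is supported either in the thin transition strips (where it is small by the first step) or inside the fan (where $\p_x u^R=O(1/t)$), so one checks that it decays in a time-integrable way, yielding $\|\Phi(\cdot,t)\|_{L^\infty(\R)}\to 0$. Finally, the classical Il'in--Ole\v{\i}nik interpolation combining this with the one-sided bound $\p_x u^\nu\le E/t$ from \cref{entropy} (and the analogous bound for $U^\nu$) converts $L^\infty$ decay of $\Phi$ into $L^\infty$ decay of $v$, and together with $\|U^\nu-u^R\|_{L^\infty}\to 0$ from the second step this gives $\sup_x|u^\nu-u^R|\to 0$. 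The main obstacle I expect is the control of $\tilde R$ near the two corners $x=f'(\ovlul)t$ and $x=f'(\ovlur)t$ of $u^R$: the Lipschitz corners of the rarefaction profile and the merely $1/t$ decay of $\p_x u^R$ in the fan have to be balanced against the exponential decay from the first step, while maintaining barriers for the maximum principle that spread linearly with the fan yet vanish in $L^\infty$ as $t\to\infty$. A secondary difficulty, absent in the shock case, is that no natural shift is available, so the antiderivative $\Phi$ must be shown to go outright to zero rather than to a constant.
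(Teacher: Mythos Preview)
Your strategy transplants the shock-case machinery (ansatz plus antiderivative plus maximum principle on $\Phi$) to the rarefaction setting, but this is precisely where the two cases diverge, and the obstacles you flag at the end are real and unresolved. The paper's proof does \emph{not} use an antiderivative variable at all for the rarefaction; it works directly with $u^\nu$ and with $u^\nu-u^R$, in two stages.

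First, outside the fan the paper proves a time-independent estimate: for every $\varepsilon>0$ there exist $N_\varepsilon,T_\varepsilon$ with $|u^\nu-\ovlul|\le\varepsilon$ for $x<f'(\ovlul)t-N_\varepsilon$ and symmetrically on the right. The upper bound comes from an explicit supersolution $Me^{\frac{B\varepsilon}{2}(x-f'(\ovlul)t)}+\tfrac{\varepsilon\nu}{2}+\ovlul$ and the maximum principle applied to $u^\nu$ itself. The lower bound uses an idea you did not anticipate: it invokes the already-proved \emph{shock} stability theorem, comparing $u^\nu$ from below with a solution whose data connect $\ovlul$ to $\ovlul-\varepsilon/2$ and which therefore converges to a small viscous shock lying above $\ovlul-\varepsilon$.

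Second, inside the fan the paper sets $Z=(t+t_0)^\kappa\bigl(u^\nu-\tilde u^R\bigr)$ with a shifted rarefaction $\tilde u^R(x,t)=u^R(x-x_0,t+t_0)$ whose edges sit on the lines $x=f'(\ovl u_{l,r})t\mp N_\varepsilon$. The equation for $Z$ carries the zeroth-order coefficient $f''(v)\,\partial_x\tilde u^R-\kappa/(t+t_0)$, and since $f''(v)\,\partial_x\tilde u^R\sim c/(t+t_0)>0$ in the fan, this \emph{good sign} forces $|Z|$ to stay bounded by a maximum-principle argument on the bounded-in-$x$ fan region, with boundary data supplied by the first step. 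No integrability of any residual is needed.

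Your antiderivative route loses exactly this good sign: passing to $\Phi$ converts the helpful zeroth-order coefficient $f''\,\partial_x u^R>0$ into a first-order one, and you are left with a source $\tilde R$ of order $1/t$ across a fan of width of order $t$, which is not time-integrable; a naive barrier then only gives $\|\Phi\|_{L^\infty}=O(\log t)$. Moreover, since $\int_\R(u^\nu-U^\nu)\,dx$ is not conserved (your $U^\nu$ is not a solution) and there is no shift mechanism in the rarefaction case, $\Phi(+\infty,t)$ will drift. These are not merely technical wrinkles; they are the reason the paper abandons antiderivatives here and exploits the expansivity of the rarefaction directly on $u^\nu$.
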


\vspace{0.3cm}

Consequently, different from the localized perturbations, i.e.,  the background shock can absorb all localized perturbations on the two sides and finally tends to itself with no shift (if the perturbation has zero mass), the periodic perturbations at infinities produce infinite perturbations onto the background shock, ended up with a constant shift, which cannot be determined explicitly by the initial perturbations in the viscous case.

\vspace{0.3cm}

The \textbf{main difficulty} to prove \cref{Thm-2-periodic} is that the perturbation $ u^\nu-\phi^\nu $ is not integrable anymore, which makes it difficult to use an anti-derivative argument as before that plays an important role in the previous study of the stability under localized perturbations. In fact, if one considers the equation of the perturbation, the coefficient of the zero-order term is $ f''(\phi^\nu)(\phi^\nu)'<0, $ which makes it harder to use either the maximum principle or the energy method.
\textbf{One of the key elements} of our proof is that we find an ansatz $ \psi_{X^\nu(t)}^\nu(x,t) $, where $ X^\nu(t) $ is a shift function (see \eqref{ode-shift}) such that the difference $ u^\nu - \psi_{X^\nu(t)}^\nu $ is integrable and has zero mass for large time.
Therefore, it is plausible to study the equation of the anti-derivative variable of the difference  $ u^\nu - \psi_{X^\nu(t)}^\nu $
(for the Burgers' equation, the ansatz actually coincides with the solution $ u^\nu $ at an arithmetic sequence of time $ \{t_k\}; $ see \cref{Prop-Burgers}).
We show that the limit of $ X^\nu(t) -st $  as $ t\rightarrow +\infty $ is actually the constant shift $ X_\infty^\nu $ in \cref{Thm-2-periodic}. And although the ansatz $ \psi_{X^\nu(t)}^\nu $ is not a solution to \cref{CL}, the error (see \cref{source}) decays exponentially both in space and in time. Then following the idea of Il'in-Ole\v{\i}nik \cite{Oleinik1960}, one can construct auxiliary functions and use the maximum principle to obtain our main results. 

\vspace{0.5cm}


\section{Preliminaries and ansatz}

We first present some useful lemmas and introduce some notations, and then construct the ansatz. 
Important properties of the ansatz will be stated as propositions, which will be proved in the rest of the paper.
In the end of this section, we outline the organizations of the proof.

\begin{Lem}\label{Lem-periodic}
	Assume that $ u_0(x) \in L^{\infty}(\R) $ is periodic with period $ p>0 $ and average $ \ovl{u} = \frac{1}{p} \int_{0}^{p} u_0(x) dx. $ Then there exists a constant $ \alpha>0, $ depending only on $ p $ and $ \nu, $ such that for any integers $ k, l\geq 0, $ the periodic solution $ u^\nu(x,t) $ to \eqref{CL},\cref{ic} satisfies that
	\begin{equation}\label{expdecay0}
		\| \p_t^k \p_x^l (u^\nu-\ovl{u}) \|_{L^{\infty}(\R)} \leq C e^{-\alpha t},  \quad t\geq 1,
	\end{equation}
	where $ C>0 $ is independent of time $ t. $
\end{Lem}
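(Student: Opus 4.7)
The plan is to first verify that spatial periodicity and the mean value $\ovl{u}$ are preserved by the viscous flow, then establish exponential decay in $L^2$ by an energy estimate, and finally bootstrap to the desired $C^k$ bounds via parabolic smoothing.

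First I would check that $u^\nu(\cdot,t)$ remains $p$-periodic. Since \cref{CL} is translation invariant, both $u^\nu(x,t)$ and $u^\nu(x+p,t)$ are bounded solutions with the same initial datum, so the uniqueness statement from the introduction forces them to coincide. Integrating \cref{CL} over $[0,p]$ and using the periodicity of $f(u^\nu)-\nu\p_x u^\nu$ yields $\tfrac{d}{dt}\int_0^p u^\nu\,dx=0$, so the spatial mean is preserved and equals $\ovl{u}$ for all $t\geq 0$.

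Next, I would set $w:=u^\nu-\ovl{u}$ and derive an $L^2$ energy identity over a period. Writing $g(w):=f(w+\ovl{u})-f(\ovl{u})$, the equation becomes $\p_t w+\p_x g(w)=\nu\p_x^2 w$. Multiplying by $w$ and integrating on $[0,p]$, the convective contribution $\int_0^p w\,g'(w)\p_x w\,dx=\int_0^p \p_x H(w)\,dx$ with $H'(s)=s\,g'(s)$ vanishes by periodicity, giving $\tfrac12\tfrac{d}{dt}\|w\|_{L^2(0,p)}^2=-\nu\|\p_x w\|_{L^2(0,p)}^2$. Since $w$ has zero spatial mean, the Poincar\'e--Wirtinger inequality $\|w\|_{L^2(0,p)}^2\leq (p/2\pi)^2\|\p_x w\|_{L^2(0,p)}^2$ combined with Gronwall then yields $\|w(\cdot,t)\|_{L^2(0,p)}\leq \|w(\cdot,0)\|_{L^2(0,p)}\,e^{-\alpha_0 t}$ with $\alpha_0:=\nu(2\pi/p)^2$.

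To upgrade to \cref{expdecay0}, I would invoke parabolic smoothing. Since $\|u^\nu\|_{L^\infty}$ is uniformly bounded by the Comparison principle, the coefficient $f'(u^\nu)$ in the equation for $w$ is uniformly bounded, and standard interior Schauder or $L^p$ parabolic estimates on unit-length cylinders give, for every $m$ and every $t\geq 1$, a bound of the form $\|w(\cdot,t)\|_{C^m(\R)}\leq C_m\,\|w(\cdot,t-\tfrac12)\|_{L^2(0,p)}$ with $C_m$ independent of $t$. Combined with the $L^2$ decay of the previous step, this yields exponential decay of $\p_x^l w$ in $L^\infty$ for every $l$; time derivatives $\p_t^k$ are then handled by converting them into spatial derivatives through repeated differentiation of the equation, establishing the full estimate \cref{expdecay0}.

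The main obstacle I expect is ensuring that the parabolic-regularity constant $C_m$ in the last step is truly uniform in $t$; this relies on the uniform $L^\infty$ bound of $u^\nu$, which makes the coefficients of the linearized operator bounded. If one prefers to avoid invoking Schauder theory, a self-contained alternative is to perform higher-order energy estimates directly: differentiate the equation in $x$, test against $\p_x^k w$, and close the estimates via Gagliardo--Nirenberg interpolation in one dimension (for instance $\|\p_x w\|_{L^\infty}^2\leq C\|\p_x w\|_{L^2}\|\p_x^2 w\|_{L^2}$), bootstrapping from the exponential $L^2$ decay already established in the second step.
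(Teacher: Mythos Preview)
Your proposal is correct, and the $L^2$ decay step via the energy identity and Poincar\'e is identical to the paper's starting point. The difference lies in the upgrade to higher derivatives. The paper does not invoke Schauder or $L^p$ parabolic theory; instead it carries out precisely the self-contained alternative you sketch at the end: an induction on the order $k$ of spatial derivatives, establishing first that $\|\p_x^k u\|_{L^2(0,p)}$ is uniformly bounded for $t\geq 1$ (via time-weighted energy estimates with cutoffs $\zeta_k(t)$ to absorb the initial rough data), and then a second induction showing $\|\p_x^k(u-\ovl{u})\|_{L^2(0,p)}\leq Ce^{-\alpha t}$; the $L^\infty$ bounds follow by Sobolev, and time derivatives by the equation. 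Your primary route via parabolic smoothing is shorter if one is willing to cite the regularity theory as a black box, but note that to obtain a $C^m$ bound with a time-uniform constant you need $f'(u^\nu)$ to be uniformly controlled in $C^{m-1,\alpha}$, which itself requires a bootstrap---so the full argument is less of a one-liner than your write-up suggests. The paper's hands-on induction avoids this circularity at the cost of more bookkeeping, and has the side benefit of being entirely elementary.
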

The proof of \cref{Lem-periodic} can be obtained by standard energy estimates and the Poincar\'{e} inequality, which is given in the Appendix A. Thus, there exists an $ \alpha>0, $ depending on $ p_l,p_r $ and $ \nu, $ such that $ u_l^\nu $ and $ u_r^\nu $ satisfy \cref{expdecay0} with $ \ovl{u} = \ovlul $ and $ \ovlur, $ respectively.

\vspace{0.3cm}

\begin{Lem}\label{Lem-decay}
	Assume that $u_0, \tilde{u}_0 \in L^{\infty}(\R)$ and there exist constants $ C>0 $ and $ \delta \in \R $ such that
	$$|u_0(x)-\tilde{u}_0(x)| \leq Ce^{\delta x}, \quad  x \in \R. $$ 
	Then it holds that
	\begin{equation*}
	|S_t^\nu u_0-S_t^\nu \tilde{u}_0|\leq C(t) e^{\delta x},  
	\quad x\in\R, ~t> 0,
	\end{equation*}
	where the constant $ C(t)>0 $ is bounded on any compact subset of $ [0,+\infty). $
\end{Lem}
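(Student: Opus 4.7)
The plan is to linearize the equation about one solution and to dominate the difference by an exponential barrier via the maximum principle. Set $v^\nu := S_t^\nu u_0$, $\tilde v^\nu := S_t^\nu \tilde u_0$, $w := v^\nu - \tilde v^\nu$. By the mean value theorem, $w$ satisfies the linear conservation equation
\begin{equation*}
  \p_t w + \p_x(b w) = \nu\, \p_x^2 w, \qquad b(x,t) := \int_0^1 f'\bigl(\tau v^\nu + (1-\tau) \tilde v^\nu\bigr)\, d\tau.
\end{equation*}
Since the semigroup preserves $L^\infty$ bounds, $|b| \le B$ uniformly, where $B$ depends only on $f$ and $\max(\|u_0\|_{L^\infty}, \|\tilde u_0\|_{L^\infty})$.

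Next I would seek a supersolution of the form $\Phi(x,t) := M(t)\, e^{\delta x}$ for the operator $L u := \p_t u + \p_x(b u) - \nu\, \p_x^2 u$. A direct computation yields
\begin{equation*}
  L\Phi = M(t)\, e^{\delta x}\, \bigl[ M'(t)/M(t) + b(x,t)\,\delta + \p_x b(x,t) - \nu \delta^2 \bigr],
\end{equation*}
so $\Phi$ is a supersolution, and hence both $\Phi - w$ and $\Phi + w$ satisfy $L(\cdot) \ge 0$, provided $M'(t) \ge M(t)\,\lambda(t)$ where
\begin{equation*}
  \lambda(t) := \sup_{x \in \R}\bigl[\nu \delta^2 - b(x,t)\,\delta - \p_x b(x,t)\bigr] \leq \nu\delta^2 + B|\delta| + \sup_{x \in \R} |\p_x b(x,t)|.
\end{equation*}
Setting $M(0) := C$ (the constant from the hypothesis) and $M(t) := C\, \exp\bigl(\int_0^t \lambda(s)\, ds\bigr)$ produces the required barrier, provided $\lambda$ is locally integrable on $[0,+\infty)$.

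The main obstacle is to control $\p_x b$ near $t = 0$: this coefficient is a linear combination with bounded $f''$-weights of $\p_x v^\nu$ and $\p_x \tilde v^\nu$, and for merely bounded initial data these derivatives are not uniformly bounded as $t \to 0^+$. However, standard parabolic smoothing for the viscous conservation law yields $\|\p_x S_t^\nu u_0\|_{L^\infty(\R)} \leq C/\sqrt{t}$ for $t \in (0,T]$, so that $\lambda(t) \leq C(1 + t^{-1/2})$ is integrable at $0$, and therefore $M(t)$ is bounded on every compact subset of $[0,+\infty)$.

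Finally, I would apply the parabolic maximum principle to $\Phi \pm w$: by hypothesis $\Phi(\cdot,0) \ge |w(\cdot,0)|$, and since $w$ is bounded in $L^\infty$ while $\Phi$ grows only exponentially in $x$, the Phragm\'en--Lindel\"of version of the maximum principle for linear parabolic equations applies and yields $|w(x,t)| \le M(t)\, e^{\delta x}$ throughout $\R \times [0,T]$. Taking $C(t) := M(t)$ completes the proof.
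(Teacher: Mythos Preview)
Your approach is genuinely different from the paper's. The paper (Appendix~B) avoids linearization entirely: it writes both solutions via the Picard--Duhamel iteration $u^{(n+1)}=K^t*u_0-\int_0^t\partial_xK^\tau*f(u^{(n)}(\cdot,t-\tau))\,d\tau$, bounds $|u^{(n)}-\tilde u^{(n)}|$ by a geometric series in $C_0\sqrt{t}$ using only the Lipschitz bound on $f$, obtains the estimate on a short interval $[0,t_0]$, and then iterates. No derivative estimates on the solutions are used.

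Your barrier argument is natural, but there is a gap at $t=0$. The equation $L(\Phi\pm w)\ge 0$ that you reach involves the zero-order coefficient $\partial_x b$, and the Phragm\'en--Lindel\"of maximum principle you invoke is stated for \emph{bounded} coefficients. You correctly note that $\sup_x|\partial_x b(\cdot,t)|\le C t^{-1/2}$ is integrable and absorb it into $M(t)$, but that alone does not license the maximum principle on $\R\times[0,T]$: running the usual proof on $[\epsilon,T]$ requires knowing $|w(x,\epsilon)|\le M(\epsilon)e^{\delta x}$ at the new initial time, which is exactly the short-time instance of what you are proving. As written, the argument is circular near $t=0$.

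The cleanest fix is to mollify the data: set $u_0^\eta=\rho_\eta*u_0$, $\tilde u_0^\eta=\rho_\eta*\tilde u_0$; then the corresponding $b^\eta$ has $\partial_x b^\eta$ uniformly bounded on $[0,T]$ (with a bound depending on $\eta$), so the standard maximum principle applies and yields $|w^\eta(x,t)|\le M_\eta(t)e^{\delta x}$. The key point is that the smoothing estimate $\|\partial_x S_t^\nu u_0^\eta\|_{L^\infty}\le C t^{-1/2}$ holds \emph{uniformly in $\eta$} (it depends only on $\|u_0\|_{L^\infty}$), so $\int_0^t\lambda_\eta\le C(t+\sqrt t)$ uniformly, and hence $M_\eta(t)$ is bounded independently of $\eta$. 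Passing to the limit $\eta\to 0$ then gives the claim. If you add this step, your proof is complete; without it, the maximum-principle step is not justified. The paper's iteration argument trades this subtlety for a slightly longer but entirely elementary computation.
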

The proof of \cref{Lem-decay}, based on approximate solutions solving linear parabolic equations, is given in the Appendix B. 

\vspace{0.3cm}

For the periodic solutions $ u^\nu_l $ and $ u_r^\nu $ defined in \cref{def-ulur-2}, the following result can follow from \cref{Lem-decay}.

\begin{Lem}\label{Lem-decay-ulr}
	Assume that the initial data $ u_0 \in L^\infty(\R) $ satisfies \cref{ic-2-per}. Then the unique solution $ u^\nu $ to \cref{CL}, \cref{ic} satisfies that
	\begin{equation}\label{end-states}
	\begin{cases}
	|u^\nu(x,t)-u_l^\nu(x,t)| \leq C(t) e^{\beta_0 x}, & \\
	|u^\nu(x,t)-u_r^\nu(x,t)| \leq C(t) e^{-\beta_0 x}, &
	\end{cases} \quad x\in\R, t > 0,
	\end{equation}
	where $ C(t) $ is bounded on any compact subset of $ [0,+\infty). $
\end{Lem}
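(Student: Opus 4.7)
The plan is to deduce this lemma as a direct application of \cref{Lem-decay}, viewing $u_l^\nu$ and $u_r^\nu$ as the semigroup orbits $S_t^\nu \tilde u_0$ for two specific choices of initial data $\tilde u_0$.

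First, I would set $\tilde u_0^{(l)}(x) := \ovlul + w_{0l}(x)$, which is precisely the initial datum of $u_l^\nu$ in \cref{def-ulur-2}. The first line of \cref{ic-2-per} can then be reinterpreted as the pointwise bound
\begin{equation*}
| u_0(x) - \tilde u_0^{(l)}(x) | = | u_0(x) - \ovlul - w_{0l}(x) | \leq C_0 e^{\beta_0 x}, \qquad x \in \R.
\end{equation*}
Applying \cref{Lem-decay} with $\delta = \beta_0 > 0$ then yields
\begin{equation*}
| u^\nu(x,t) - u_l^\nu(x,t) | = | S_t^\nu u_0 - S_t^\nu \tilde u_0^{(l)} | \leq C(t) e^{\beta_0 x}, \qquad x\in\R,\ t>0,
\end{equation*}
with $C(t)$ bounded on compact subsets of $[0,+\infty)$, which is the first estimate of \cref{end-states}.

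Next, I would repeat the argument with $\tilde u_0^{(r)}(x) := \ovlur + w_{0r}(x)$, which is the initial datum of $u_r^\nu$. The second line of \cref{ic-2-per} gives
\begin{equation*}
| u_0(x) - \tilde u_0^{(r)}(x) | \leq C_0 e^{-\beta_0 x}, \qquad x\in\R,
\end{equation*}
and invoking \cref{Lem-decay} with $\delta = -\beta_0 < 0$ produces the second estimate of \cref{end-states}. The only (mild) care needed is to verify that \cref{Lem-decay} is genuinely stated for arbitrary real $\delta$, since in our two applications the sign of $\delta$ is different; this is why the lemma is formulated with $\delta \in \R$ rather than with a fixed sign.

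There is essentially no obstacle here, as the whole content of the proof lies in \cref{Lem-decay}; the role of the present lemma is just to package that general comparison principle for the two specific pairs relevant to the sequel. No further structural properties of $w_{0l}, w_{0r}$ (e.g.\ periodicity or zero average) are used in this step — those will enter later through \cref{Lem-periodic} when the asymptotic behaviors of $u_l^\nu$ and $u_r^\nu$ themselves are needed.
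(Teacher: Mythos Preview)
Your proposal is correct and matches the paper's approach exactly: the paper states that \cref{Lem-decay-ulr} ``can follow from \cref{Lem-decay}'' and provides no further argument, so the content is precisely the two applications of \cref{Lem-decay} with $\delta=\beta_0$ and $\delta=-\beta_0$ that you spell out.
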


\vspace{0.2cm}

To prove \cref{Thm-2-periodic}, we now construct the ansatz.
For the shock profile $ \phi^\nu $ defined in \cref{def-phi-nu}, we first define the function:
\begin{equation}\label{def-g}
g^\nu(x):=\frac{\phi^\nu(x)-\ovlur}{\ovlul-\ovlur}, \quad x\in\R.
\end{equation}

\begin{Lem}\label{Lem-g}
	The function $g^\nu\in C^{\infty}(\R)$ satisfies that
	\begin{itemize}
		\item[(i)] there exist positive constants $\beta_1 $ and $ \beta_2,$ depending on $\nu, f, \ovlul $ and  $\ovlur,$ such that
		\begin{equation}\label{g1}
		\beta_1\leq \frac{-(g^\nu)'(x)}{(\ovlul-\ovlur)g^\nu(x)(1-g^\nu(x))}\leq \beta_2, \qquad x\in \R;
		\end{equation}
		\item[(ii)] with the inequality \eqref{g1}, there exists a constant $C>0,$ depending on $\nu, f, \ovlul$ and $ \ovlur,$ such that
		\begin{equation}\label{g2}
		\begin{aligned}
		&\frac{1}{C}e^{-\beta_2 x}\leq  g^\nu(x) \leq Ce^{-\beta_1 x}, &\quad x>0,\\
		&\frac{1}{C}e^{\beta_2 x}\leq  1-g^\nu(x) \leq C e^{\beta_1 x}, &\quad x<0,\\ 
		&\frac{1}{C}e^{-\beta_2|x|}\leq  -(g^\nu)'(x)\leq C e^{-\beta_1|x|}, &\quad x\in \R.
		\end{aligned}
		\end{equation}
	\end{itemize}
\end{Lem}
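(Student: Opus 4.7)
The plan is to integrate the traveling-wave ODE once to reduce it to an autonomous first-order equation, and then exploit the convex structure of $f$ together with Rankine-Hugoniot to factor the nonlinear term cleanly.

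First I would integrate \eqref{ode1} from $-\infty$ to $x$, using that $\phi^\nu \to \ovlul$ and $(\phi^\nu)' \to 0$ as $x \to -\infty$, to arrive at the first-order reduction
$$\nu (\phi^\nu)'(x) = h(\phi^\nu(x)), \qquad h(u) := f(u) - f(\ovlul) - s(u-\ovlul).$$
Rankine-Hugoniot gives $h(\ovlul) = h(\ovlur) = 0$, and strict convexity of $f$ (and hence of $h$) makes these the only zeros, with $h < 0$ on $(\ovlur,\ovlul)$ and $h'(\ovlul) > 0 > h'(\ovlur)$. Therefore I can factor $h(u) = -(u-\ovlur)(\ovlul-u)\,q(u)$ with $q \in C([\ovlur,\ovlul])$ strictly positive up to and including the endpoints (the endpoint values being given by L'Hopital, and positive by the sign of $h'$ there). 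Substituting $\phi^\nu = \ovlur + (\ovlul-\ovlur)\,g^\nu$ in the reduction yields
$$-(g^\nu)'(x) = \frac{q(\phi^\nu(x))}{\nu}\,(\ovlul-\ovlur)\,g^\nu(x)\bigl(1-g^\nu(x)\bigr),$$
so part (i) follows with $\beta_1 := \nu^{-1}\min_{[\ovlur,\ovlul]} q$ and $\beta_2 := \nu^{-1}\max_{[\ovlur,\ovlul]} q$.

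For (ii) I would separate variables. Using the identity $\frac{d}{dx}\ln\frac{1-g^\nu}{g^\nu} = \frac{-(g^\nu)'}{g^\nu(1-g^\nu)}$ and integrating the inequality from (i) on $[0,x]$ gives, after exponentiating,
$$\frac{1-g^\nu(0)}{g^\nu(0)}\,e^{\beta_1(\ovlul-\ovlur)x} \;\leq\; \frac{1-g^\nu(x)}{g^\nu(x)} \;\leq\; \frac{1-g^\nu(0)}{g^\nu(0)}\,e^{\beta_2(\ovlul-\ovlur)x}.$$
For $x > 0$, since $g^\nu$ is decreasing and valued in $(0,1)$, the factor $1-g^\nu(x)$ sits in $[1-g^\nu(0),1)$ and is bounded away from zero, so solving for $g^\nu(x)$ from the two sides produces the claimed two-sided exponential bound on $g^\nu$ (up to the purely cosmetic relabelling $\beta_i \mapsto \beta_i(\ovlul-\ovlur)$). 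The bound for $1-g^\nu$ on $x<0$ follows symmetrically. Finally, the two-sided estimate on $-(g^\nu)'$ is read off from (i) by reinserting these bounds: on $x>0$ the factor $1-g^\nu$ stays in $[1-g^\nu(0),1]$, so $-(g^\nu)' \sim g^\nu \sim e^{-\beta_i x}$; on $x<0$ symmetrically $g^\nu$ is bounded away from $0$, so $-(g^\nu)' \sim 1-g^\nu \sim e^{-\beta_i|x|}$.

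There is no genuine obstacle in this argument; the only point that deserves care is verifying that the factor $q(u)$ stays \emph{strictly positive} all the way up to the endpoints $\ovlul$ and $\ovlur$, since otherwise the ratio in (i) could degenerate. This is precisely where the strict convexity of $f$ (via $h'(\ovlul)>0>h'(\ovlur)$) is used in an essential way, and once secured, the rest of the lemma is a clean separation-of-variables computation.
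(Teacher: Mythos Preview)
Your proof is correct and follows essentially the same route as the paper's. Both integrate the profile ODE once to obtain the first-order reduction, extract the bound \eqref{g1} from strict convexity of $f$, and then separate variables to deduce \eqref{g2}. The only cosmetic difference is in the bookkeeping for part (i): you factor $h(u)=-(u-\ovlur)(\ovlul-u)\,q(u)$ abstractly and bound $q$ by compactness, whereas the paper writes an explicit second-order Taylor identity for $f(\rho\ovlul+(1-\rho)\ovlur)-\rho f(\ovlul)-(1-\rho)f(\ovlur)$ that identifies the analogous factor as a double integral of $f''$, giving $\beta_1=\tfrac{1}{2\nu}\min f''$ and $\beta_2=\tfrac{1}{2\nu}\max f''$ directly.
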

The proof of \cref{Lem-g} can be found in \cite{Goodman1992}. And we will give a simplified proof for the scalar viscous conservation laws in the Appendix C.
For convenience, in the following part of this paper we define
\begin{equation}\label{def-beta}
\beta := \min\{ \beta_0, \beta_1, \beta_2 \}>0.
\end{equation}

\vspace{0.2cm}

Then for any $ C^1 $ curve $ \xi(t): [0,+\infty)\rightarrow\R, $ we set
\begin{equation}\label{def-shift-g}
g_{\xi}^\nu(x) := g^\nu(x-\xi(t)), \quad x\in\R, t\geq 0,
\end{equation}
with the derivatives:
\begin{equation*}
(g_{\xi}^\nu)^{(k)}(x) := (g^\nu)^{(k)}(x-\xi(t)), \quad k\geq 1.
\end{equation*}
Motivated by \cref{Lem-decay-ulr} and the formula of the viscous shock profile
$$ \phi_\xi^\nu(x) := \phi^\nu(x-\xi(t)) = \ovlul g_\xi^\nu(x) + \ovlur (1-g_\xi^\nu(x)), $$
we construct the ansatz as
\begin{equation}\label{def-psi}
\psi_{\xi}^\nu(x,t) := u_l^\nu(x,t) g_{\xi}^\nu(x) + u_r^\nu(x,t)(1-g_{\xi}^\nu(x)).	
\end{equation}
It is noted that the shift $\xi(t)$ appears only in $g^\nu$.
Thus $\psi_{\xi}^\nu$ satisfies
\begin{equation}\label{psi-equ}
\p_t \psi_{\xi}^\nu + \p_x f(\psi_{\xi}^\nu) - \nu \p_x^2 \psi_{\xi}^\nu=h_{\xi}^\nu,
\end{equation}
where the source term $h_\xi^\nu$ is
\begin{equation}\label{source}
\begin{aligned}
h_{\xi}^\nu =~ & \p_x f(\psi_\xi^\nu) -\p_x f(u_l^\nu)g_\xi^\nu - \p_x f(u_r^\nu)(1-g_\xi^\nu) - 2 \nu \p_x(u_l^\nu-u_r^\nu) (g_\xi^\nu)' \\
& - (u_l^\nu - u_r^\nu) \left( (g_\xi^\nu)' ~\xi' + \nu (g_\xi^\nu)'' \right),
\end{aligned}
\end{equation}
which can be rewritten as
\begin{equation}\label{source-1}
\begin{aligned}
h_\xi^\nu= ~& \p_x \left[ \left(f(\psi_\xi^\nu)-f(u_l^\nu)\right)g_\xi^\nu + \left(f(\psi_\xi^\nu)-f(u_r^\nu)\right)(1-g_\xi^\nu) - 2\nu (u_l^\nu-u_r^\nu)(g_\xi^\nu)' \right] \\ 
& + \left(f(u_l^\nu)-f(u_r^\nu)\right) (g_\xi^\nu)' - (u_l^\nu-u_r^\nu) (g_\xi^\nu)' \xi' + \nu (u_l^\nu-u_r^\nu) (g_\xi^\nu)''.
\end{aligned}	
\end{equation}
This, together with the fact $ \nu (g_\xi^\nu)''= f'(\phi_\xi^\nu)(g_\xi^\nu)' - s (g_\xi^\nu)', $ yields that
\begin{equation}\label{source-2}
\begin{aligned}
h_\xi^\nu = & \left( f'(\psi_\xi^\nu) - f'(u_l^\nu) \right)\p_x u_l^\nu g_\xi^\nu +  \left( f'(\psi_\xi^\nu) - f'(u_r^\nu) \right) \p_x u_r^\nu (1-g_\xi^\nu) \\
& - 2 \nu \p_x(u_l^\nu-u_r^\nu) (g_\xi^\nu)' - (u_l^\nu-u_r^\nu)\left(\xi'(t)- s + f'(\phi_\xi^\nu)-f'(\psi_\xi^\nu) \right) (g_\xi^\nu)'.
\end{aligned}
\end{equation}
The formulas \eqref{source-1} and \eqref{source-2} will be used later. 

The ansatz $ \psi_{\xi}^\nu $ is expected to satisfy $\int_\R \left( u^\nu - \psi_{\xi}^\nu \right)(x,t) dx \equiv 0, $ so that the anti-derivative variable can vanish at both infinities $ |x| \rightarrow\infty. $
Under the assumptions of \cref{Thm-2-periodic}, it follows from \cref{Lem-periodic} that there exists a large $ T_0>0 $ such that
\begin{equation}\label{small}
0<\frac{\ovlul-\ovlur}{2} < \left( u_l^\nu-u_r^\nu \right)(x,t) < 2 (\ovlul-\ovlur), \quad x\in\R, t\geq T_0.
\end{equation}
The time $ T_0 $ is chosen to guarantee $ \int_\R (u_l^\nu-u_r^\nu)(x,t) (g^\nu_\xi)' dx < 0 $ for all $ t\geq T_0. $ 
It is noted that the terms appearing in the square brackets of \eqref{source-1} vanish as $ |x| \rightarrow \infty. $ For the equation of the perturbation,
\begin{equation}\label{equ-u-psi}
\p_t (u^\nu-\psi_\xi^\nu) + \p_x \left( f(u^\nu)-f(\psi_\xi^\nu) \right) = \nu \p_x^2 (u^\nu -\psi_\xi^\nu) - h_\xi^\nu,
\end{equation}
we aim to choose a curve $ \xi = X^\nu(t) $ such that $ \int_{\R} h_{X^\nu}^\nu(x,t) dx =0. $ Integrating \eqref{source-1} with respect to $ x, $ we require that the curve $ X^\nu(t) $ solves the problem:
\begin{equation}\label{ode-shift}
\begin{cases}
(X^\nu)'(t) = F^\nu(X^\nu(t),t), & \quad t>T_0, \\
X^\nu(T_0)=X_0^\nu, & 
\end{cases}
\end{equation}
where 
\begin{equation}\label{def-F-nu}
F^\nu(\xi,t) :=
\dfrac{\int_\R \left[ \nu (u_l^\nu-u_r^\nu) (g_\xi^\nu)''  +\left(f(u_l^\nu)-f(u_r^\nu)\right) (g_\xi^\nu)' \right] dx}{\int_\R (u_l^\nu-u_r^\nu) (g_\xi^\nu)' dx},
\end{equation}
and the initial data $ X^\nu(T_0) = X_0^\nu $ is chosen so that
\begin{equation}\label{def-X-0}
\int_\R (u^\nu-\psi_{X_0^\nu}^\nu)(x,T_0) dx =0.
\end{equation}

\begin{Prop}\label{Prop-shift}
	Under the assumptions of \cref{Thm-2-periodic},
	there exists a unique $ X_0^\nu\in\R $ such that \cref{def-X-0} holds, and	the problem \cref{ode-shift} admits a unique $ C^\infty $ solution $ X^\nu(t) $ with
	\begin{equation}\label{shift-infty} 
	|X^\nu(t)-st-X_\infty^\nu| \leq C e^{-\alpha t}, \quad t\geq T_0,
	\end{equation}	
	where $ X_\infty^\nu $ is the constant defined in \cref{Thm-2-periodic} and the constant $C>0$ is independent of time $ t. $
\end{Prop}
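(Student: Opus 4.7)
The plan is to establish \cref{Prop-shift} in three stages: well-posedness of the initial value and the ODE, exponential convergence of the shift, and identification of the limit with the formula in \cref{Thm-2-periodic}.

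For the first stage, define $I(\xi) := \int_\R (u^\nu - \psi_\xi^\nu)(x, T_0)\,dx$. Via the identity $u^\nu - \psi_\xi^\nu = (u^\nu - u_l^\nu)g_\xi^\nu + (u^\nu - u_r^\nu)(1 - g_\xi^\nu)$, the exponential decays furnished by \cref{Lem-decay-ulr} and \cref{Lem-g} make the integrand absolutely integrable. Differentiating,
\[
I'(\xi) = \int_\R (u_l^\nu - u_r^\nu)(g_\xi^\nu)'\,dx,
\]
which, by \eqref{small} together with $\int_\R (g_\xi^\nu)'\,dx = -1$, lies in $[-2(\ovlul - \ovlur),\,-(\ovlul - \ovlur)/2]$ uniformly in $\xi$. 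Thus $I: \R \to \R$ is a strictly decreasing diffeomorphism, and $X_0^\nu$ exists and is unique. The same lower bound keeps the denominator of \eqref{def-F-nu} away from zero on $\R \times [T_0, \infty)$; combined with the smoothness and uniform boundedness of $F^\nu$ (from the $L^\infty$ bounds on $u_l^\nu, u_r^\nu$ and the fixed $L^1$ norms of $(g^\nu)', (g^\nu)''$), the standard local existence theorem for ODEs together with this a priori bound yields a unique global $C^\infty$ solution $X^\nu(t)$.

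For the asymptotic rate, \cref{Lem-periodic} provides $\|u_l^\nu - \ovlul\|_{L^\infty} + \|u_r^\nu - \ovlur\|_{L^\infty} \leq Ce^{-\alpha t}$. Inserting $u_l^\nu = \ovlul + O(e^{-\alpha t})$ and $u_r^\nu = \ovlur + O(e^{-\alpha t})$ into \eqref{def-F-nu}, and using $\int_\R (g_\xi^\nu)'\,dx = -1$, $\int_\R (g_\xi^\nu)''\,dx = 0$ together with the Rankine--Hugoniot relation $f(\ovlul) - f(\ovlur) = s(\ovlul - \ovlur)$, the numerator reduces to $-(f(\ovlul) - f(\ovlur)) + O(e^{-\alpha t})$ and the denominator to $-(\ovlul - \ovlur) + O(e^{-\alpha t})$, both uniformly in $\xi$. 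Hence $F^\nu(\xi, t) - s = O(e^{-\alpha t})$, and integrating the ODE yields $X^\nu(t) - st = X_0^\nu - sT_0 + \int_{T_0}^t (F^\nu - s)\,d\tau$, which converges to some constant $\tilde X_\infty^\nu$ at the claimed exponential rate $Ce^{-\alpha t}$.

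The main obstacle is to identify $\tilde X_\infty^\nu$ with the explicit value $X_\infty^\nu$ of \cref{Thm-2-periodic}. By the very construction of $F^\nu$, the source term $h_{X^\nu}^\nu$ in \eqref{equ-u-psi} has zero spatial mean, so $V(t) := \int_\R (u^\nu - \psi_{X^\nu}^\nu)(x, t)\,dx$ is conserved on $[T_0, \infty)$ and vanishes identically by the choice of $X_0^\nu$. I would compute $V(t)$ in two complementary ways. On the one hand, the Co-conservation of $S_t^\nu$ applied to $u_0$ and the glued reference state $v_0$ equal to $\ovlul + w_{0l}(x)$ for $x<0$ and $\ovlur + w_{0r}(x)$ for $x>0$ (which differs from $u_0$ by an $L^1$ function thanks to \eqref{ic-2-per}) converts the localized mass deficit between $u^\nu(\cdot,t)$ and the translated profile $\phi^\nu(\cdot - X^\nu(t))$ into the contribution $X_{\infty,1}^\nu/(\ovlul - \ovlur)$. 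On the other hand, integrating the periodic conservation laws for $u_l^\nu - \ovlul$ and $u_r^\nu - \ovlur$ on the half lines $(-\infty, 0]$ and $[0, +\infty)$ over $[0, \infty)$, together with the initial anti-derivative averages $\tfrac{1}{p_{l/r}}\int_0^{p_{l/r}}\int_0^x w_{0,l/r}(y)\,dy\,dx$ arising from the boundary flux bookkeeping at $x = 0$, yields precisely the contribution $X_{\infty,2}^\nu/(\ovlul - \ovlur)$. Matching the two expressions as $t \to \infty$, using the exponential decays already established, identifies $\tilde X_\infty^\nu = X_\infty^\nu$; careful tracking of the boundary flux terms and anti-derivative averages is the delicate technical core of the argument.
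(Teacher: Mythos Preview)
Your first two stages are correct and essentially identical to the paper's: the monotonicity of $I(\xi)$ gives the unique $X_0^\nu$, the nondegeneracy of the denominator in \eqref{def-F-nu} on $[T_0,\infty)$ plus Cauchy--Lipschitz gives the global smooth solution, and plugging $u_l^\nu=\ovlul+O(e^{-\alpha t})$, $u_r^\nu=\ovlur+O(e^{-\alpha t})$ into $F^\nu$ yields $|(X^\nu)'(t)-s|\le Ce^{-\alpha t}$, hence convergence of $X^\nu(t)-st$ to \emph{some} constant at rate $e^{-\alpha t}$.

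The gap is in Stage~3, the identification of the limit. Your scheme rests on two operations that are not well-defined as stated. First, the quantities you want to manipulate---$\int_\R(u^\nu-\phi^\nu(\cdot-X^\nu(t)))\,dx$ and $\int_{-\infty}^0 (u_l^\nu-\ovlul)\,dx$, $\int_0^{+\infty}(u_r^\nu-\ovlur)\,dx$---are \emph{not} absolutely convergent, since the integrands are asymptotically periodic with zero mean rather than $L^1$. The Co-conservation property applied to $u_0$ and your glued state $v_0$ gives $\int_\R(S_t^\nu u_0 - S_t^\nu v_0)\,dx=\int_\R(u_0-v_0)\,dx$, but $S_t^\nu v_0$ is an unknown solution unrelated to either $\psi_{X^\nu}^\nu$ or $\phi^\nu(\cdot-X^\nu(t))$, so this identity does not connect to $V(t)$. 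Second, and more structurally, the equation $V(t)\equiv 0$ only holds on $[T_0,\infty)$; relating $X_0^\nu$ to the initial data at $t=0$ requires bridging the interval $[0,T_0]$, on which the denominator of $F^\nu$ may vanish. Your sketch has no mechanism for this.

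The paper handles both issues with specific devices absent from your outline. It extends $X^\nu$ to $[0,T_0]$ by a \emph{regularized} ODE (subtracting a large constant $M$ from the denominator of $F^\nu$), which yields the relation $M(X_0^\nu-\hat X_0^\nu)+\int_\R(u_0-\psi_{\hat X_0^\nu}^\nu)(x,0)\,dx=0$ linking the shift to the initial data. Then, rather than integrating divergent half-line integrals, it integrates the weighted equations $(\partial_t u_l^\nu+\cdots)g_{\tilde X^\nu}^\nu+(\partial_t u_r^\nu+\cdots)(1-g_{\tilde X^\nu}^\nu)=0$ over a \emph{bounded} space-time domain $\Omega_y^N$ with lateral boundaries along $x=\tilde X^\nu(\tau)+(-N+y)p_l$ and $x=\tilde X^\nu(\tau)+(N+y)p_r$, obtains explicit boundary terms $A^N, B_l^N, B_r^N$, \emph{averages over $y\in(0,1)$} to tame the periodic oscillations, and only then sends $N\to\infty$. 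This averaging step is what produces the anti-derivative averages $\tfrac{1}{p_i}\int_0^{p_i}\int_0^x w_{0i}\,dy\,dx$ in $X_{\infty,2}^\nu$; it is the genuine technical core, not a bookkeeping afterthought.
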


\begin{Rem}
	The choice of $ X^\nu(t) $ can make the source term $ h_{X^\nu}^\nu( x,t)$ and its anti-derivative variable $ \int_{-\infty}^{x} h_{X^\nu}^\nu( y,t) dy $ decay exponentially fast both in space and in time (see Proposition \ref{prop-U} for details), which plays an important role in the proof of \cref{Thm-2-periodic}. 
\end{Rem}

With the shift curve $ X^\nu(t) $ so determined, we have the following results.

\begin{Prop}\label{Prop-ansatz}
	Under the assumptions of \cref{Thm-2-periodic}, there exist constants $C>0$ and $0<\mu\leq \alpha,$ independent of time $ t, $ such that the unique bounded solution $ u^\nu $ to \eqref{CL}, \eqref{ic} satisfies
	\begin{equation}\label{ineq-ansatz}
		\sup_{x \in \R}|u^\nu(x,t)-\psi_{X^\nu(t)}^\nu(x,t)|\leq C e^{-\mu t},  \quad t\geq T_0.
	\end{equation}
\end{Prop}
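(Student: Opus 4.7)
The plan is to combine the anti-derivative variable method with a maximum-principle argument of Il'in--Ole\v{\i}nik type, adapted to the time-dependent ansatz and to the presence of a small source term.

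First, set $v^\nu := u^\nu - \psi_{X^\nu(t)}^\nu$ and define the anti-derivative
\[
U^\nu(x,t) := \int_{-\infty}^x v^\nu(y,t)\,dy.
\]
By \cref{Lem-decay-ulr} and the exponential tails of $g^\nu, 1-g^\nu$ in \cref{Lem-g}, one has $|v^\nu(x,t)| \leq C(t)\, e^{-\beta|x-X^\nu(t)|}$, so $U^\nu$ is well-defined and vanishes as $x \to -\infty$. The choice of $X_0^\nu$ via \cref{def-X-0} together with the ODE \cref{ode-shift} (designed so that $\int_\R h_{X^\nu}^\nu(x,t)\,dx = 0$) forces $\int_\R v^\nu(\cdot,t)\,dx \equiv 0$ for every $t \geq T_0$, so $U^\nu$ also vanishes as $x \to +\infty$. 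Integrating \cref{equ-u-psi} in $x$ and writing $f(u^\nu) - f(\psi^\nu) = f'(\psi^\nu)\, v^\nu + \tfrac12 f''(\theta^\nu)(v^\nu)^2$ yields the viscous Hamilton--Jacobi equation
\[
\partial_t U^\nu + f'(\psi_{X^\nu}^\nu)\,\partial_x U^\nu + \tfrac12 f''(\theta^\nu)(\partial_x U^\nu)^2 = \nu\,\partial_x^2 U^\nu - H^\nu,
\]
where $H^\nu(x,t) := \int_{-\infty}^x h_{X^\nu(t)}^\nu(y,t)\,dy$.

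Second, I would establish the pointwise decay estimate
\[
|h_{X^\nu}^\nu(x,t)| + |H^\nu(x,t)| \leq C\, e^{-\mu_0 t}\, e^{-\beta|x-X^\nu(t)|},
\]
which is essentially the forthcoming proposition alluded to in the remark following \cref{Prop-shift}. This follows by inspection of \cref{source-2}: every summand carries either a factor $\partial_x(u_l^\nu - \ovlul)$ or $\partial_x(u_r^\nu - \ovlur)$ (exponentially small in $t$ by \cref{Lem-periodic}), or the factor $(g_{X^\nu}^\nu)'$ (exponentially small in $|x-X^\nu(t)|$ by \cref{Lem-g}), or the factor $(X^\nu)'-s+f'(\phi_{X^\nu}^\nu)-f'(\psi_{X^\nu}^\nu)$, which is exponentially small in $t$ by \cref{shift-infty} and \cref{Lem-periodic}.

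Third, I would apply the maximum principle to $U^\nu$ on $\R \times [T_0,+\infty)$ by comparison with barriers of the form
\[
W^\pm(x,t) = \pm\bigl( A\, e^{-\mu t} + B\, e^{-\mu t} G(x - X^\nu(t))\bigr),
\]
where $G$ is a carefully chosen bounded, smooth, positive weight (for instance a smooth combination of $g^\nu$ and $1-g^\nu$, or a tempered moving Gaussian at speed $s$), tuned so that the linear part of the PDE for $U^\nu$ acting on $W^\pm$ dominates $|H^\nu|$ pointwise. The quadratic term $\tfrac12 f''(\theta^\nu)(\partial_x U^\nu)^2 \geq 0$ has the favourable sign for a supersolution and vanishes at interior maxima of $U^\nu - W^+$; for the subsolution the wrong-signed quadratic is absorbed via the one-sided Ole\v{\i}nik bound \cref{entropy}, which combined with the smoothness of the ansatz yields a one-sided bound on $\partial_x U^\nu = v^\nu$. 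Comparison then gives $\|U^\nu(\cdot,t)\|_{L^\infty} \leq C\, e^{-\mu t}$ for some $0<\mu\leq\alpha$. Combining this with a uniform bound on $\partial_x^2 U^\nu = \partial_x v^\nu$ (again from \cref{entropy} together with smoothness of $\psi^\nu$) and the interpolation $\|\partial_x U^\nu\|_{L^\infty} \leq C\|U^\nu\|_{L^\infty}^{1/2}\|\partial_x^2 U^\nu\|_{L^\infty}^{1/2}$ yields \cref{ineq-ansatz} after halving $\mu$.

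The hard part will be the explicit construction of the weight $G$ in the third step. In the classical Il'in--Ole\v{\i}nik setting the source is absent and the advection coefficient $f'(\psi^\nu)$ is stationary, whereas here $H^\nu$, $X^\nu(t)-st-X_\infty^\nu$, and the advection coefficient itself all carry their own exponential time scales coming from the periodic-to-average relaxation in \cref{Lem-periodic}. The profile $G$ has to simultaneously control $|H^\nu|$ pointwise, match the correct spatial decay rates on each side of the shock (dictated by $\beta_1,\beta_2$ in \cref{Lem-g}), and be compatible with the shock-like drift of $f'(\psi^\nu)$ across the transition layer.
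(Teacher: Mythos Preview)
Your overall architecture---anti-derivative $U^\nu$, maximum principle with a moving weight, then convert back to $v^\nu$---is exactly what the paper does. The substantive divergence is in how the nonlinearity is handled, and this is where your outline has a gap.

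The paper does \emph{not} keep the quadratic term. It writes $f(u^\nu)-f(\psi^\nu_{X^\nu})=\bar a\,\partial_xU^\nu$ with $\bar a(x,t)=\int_0^1 f'\bigl(\psi^\nu_{X^\nu}+\rho v^\nu\bigr)\,d\rho$, so the equation for $U^\nu$ is \emph{linear}: $\partial_tU^\nu-\nu\partial_x^2U^\nu+\bar a\,\partial_xU^\nu=H^\nu$. The price is that $\bar a$ depends on the unknown $u^\nu$, and to verify the crucial sign condition $\bar a-(X^\nu)'>\varepsilon_1$ for $x-X^\nu<-N_0$ (and the symmetric one on the right) the paper first proves a substantial time-independent estimate: for every $\varepsilon>0$ there exist $T,N$ with $|u^\nu-u_l^\nu|<\varepsilon$ for $x<st-N,\ t>T$, and similarly on the right. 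This is obtained by constructing explicit super/subsolutions and is a nontrivial four-step argument that your outline omits entirely.

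Your alternative---drift $f'(\psi^\nu_{X^\nu})$ plus quadratic $\tfrac12 f''(\theta^\nu)(\partial_xU^\nu)^2$---could in principle avoid that lemma since $\psi^\nu_{X^\nu}$ is explicit, but your stated mechanism for the subsolution side is wrong. You claim the wrong-signed quadratic is ``absorbed via the one-sided Ole\v{\i}nik bound \eqref{entropy}, which \ldots\ yields a one-sided bound on $\partial_xU^\nu=v^\nu$.'' But \eqref{entropy} bounds $\partial_xu^\nu$, i.e.\ essentially $\partial_x^2U^\nu$, not $\partial_xU^\nu$; it gives no direct control of $(\partial_xU^\nu)^2$ at an extremum. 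If instead you simply use boundedness $|v^\nu|\le C$ to write $\tfrac12 f''(\theta^\nu)(v^\nu)^2=\bigl[\tfrac12 f''(\theta^\nu)v^\nu\bigr]v^\nu$ and absorb this into the drift, you are back to a coefficient depending on $u^\nu$---i.e.\ back to the paper's $\bar a$ and its need for the time-independent estimate.

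Two smaller points. The paper's weight is $\Theta(x)=e^{-\delta\theta(x)}$ with $\theta=\cosh(\gamma x)$ on $|x|\le N_0$ and linear outside; the verification $L\Theta\ge 2\mu\Theta$ uses exactly the sign structure of $\bar a-(X^\nu)'$ established above. And for the last step the paper does not use Landau interpolation (which would require a two-sided bound on $\partial_xv^\nu$); it uses only the one-sided $\partial_xv^\nu\le M_3$ together with the elementary argument ``if $|v^\nu(x_0,t_0)|>3\sqrt{M_2M_3}\,e^{-\mu t_0/2}$, integrate over an interval of length $3\sqrt{M_2/M_3}\,e^{-\mu t_0/2}$ and contradict $|U^\nu|\le M_2e^{-\mu t_0}$.''
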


\begin{Prop}\label{Prop-Burgers}
	Assume that $ f(u) = u^2/2 $ in \cref{CL}, and the initial data satisfies
	\begin{equation}\label{ic-Burgers}
	u_0(x) = \phi^\nu(x) + w_0(x),
	\end{equation}
	where $ w_0(x) $ is periodic with period $ p>0 $ and zero average. Then at each time $ t_k = kp/(\ovlul-\ovlur), ~k= 0, 1, 2, \cdots, $ the solution $ u^\nu $ to \cref{CL}, \cref{ic} satisfies that
	\begin{equation}\label{burgers-sol}
	\begin{aligned}
	& u^\nu(x,t_k) =  \psi_{st_k}^\nu(x,t_k),  \quad x \in \R.
	\end{aligned}
	\end{equation}
	And the shift function $ X^\nu $ in \cref{Prop-shift}
	satisfies that for $ t_k \geq T_0, $
	\begin{equation}\label{X-k}
	X^\nu(t_k) = s t_k.
	\end{equation}
\end{Prop}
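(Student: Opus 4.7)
The plan is to exploit the Cole--Hopf linearization, which is special to $f(u)=u^2/2$. Set $u=-2\nu h_x/h$ whenever $u$ solves \cref{CL}, with $h$ satisfying the heat equation $h_t=\nu h_{xx}$, and let $h,h_l,h_r$ denote the Cole--Hopf images of $u^\nu,u_l^\nu,u_r^\nu$ respectively.

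The crux is an additive decomposition at $t=0$. A direct integration against the explicit Burgers shock profile shows that the Cole--Hopf image of $\phi^\nu$ is a linear combination $c_l e^{-\ovlul x/(2\nu)}+c_r e^{-\ovlur x/(2\nu)}$ with positive constants $c_l,c_r$ depending only on the fixed translation of $\phi^\nu$; in particular the identification $\phi^\nu=-2\nu(h_\phi)_x/h_\phi$ recovers $g^\nu(x)=c_l e^{-\ovlul x/(2\nu)}/(c_l e^{-\ovlul x/(2\nu)}+c_r e^{-\ovlur x/(2\nu)})$. Writing $W(x):=\int_0^x w_0(y)\,dy$, which is $p$-periodic because $w_0$ has zero average, the factor $e^{-W(x)/(2\nu)}$ coming from the perturbation is common to $h(\cdot,0)$, $h_l(\cdot,0)$ and $h_r(\cdot,0)$. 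Hence $h(\cdot,0)=c_l h_l(\cdot,0)+c_r h_r(\cdot,0)$, and by linearity of the heat equation this additive decomposition persists for all $t\ge 0$. Substituting into $u^\nu=-2\nu h_x/h$ and using $(h_l)_x=-u_l^\nu h_l/(2\nu)$ and $(h_r)_x=-u_r^\nu h_r/(2\nu)$ yields the pointwise formula
$$
u^\nu(x,t)=\frac{c_l u_l^\nu h_l+c_r u_r^\nu h_r}{c_l h_l+c_r h_r}(x,t).
$$

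To identify this with $\psi_{st_k}^\nu$ at the discrete times $t=t_k$, I will further write $h_l(x,t)=e^{-\ovlul x/(2\nu)+\ovlul^2 t/(4\nu)}\,q(x-\ovlul t,t)$ and, analogously, $h_r(x,t)=e^{-\ovlur x/(2\nu)+\ovlur^2 t/(4\nu)}\,q(x-\ovlur t,t)$; a direct substitution into the heat equation shows that the same function $q$ appears in both, solving $q_t=\nu q_{zz}$ with the $p$-periodic initial datum $q(z,0)=e^{-W(z)/(2\nu)}$, hence $q(\cdot,t)$ is $p$-periodic for every $t$. At $t=t_k$ we have $(\ovlul-\ovlur)t_k=kp$, so $q(x-\ovlul t_k,t_k)=q(x-\ovlur t_k,t_k)$; this common factor cancels in the ratio $c_l h_l/(c_l h_l+c_r h_r)$, and a short algebraic simplification using $2s=\ovlul+\ovlur$ reduces this ratio to $g^\nu(x-st_k)$ exactly. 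Inserting into the formula above gives \cref{burgers-sol}.

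Finally, \cref{X-k} is a consequence of \cref{burgers-sol} and the uniqueness of the shift. Indeed, \cref{burgers-sol} trivially implies $\int_\R(u^\nu-\psi_{st_k}^\nu)(x,t_k)\,dx=0$, while the ODE \cref{ode-shift} is designed precisely so that $\int_\R(u^\nu-\psi_{X^\nu(t)}^\nu)(x,t)\,dx\equiv 0$ for $t\ge T_0$. Under \cref{small}, the map $\xi\mapsto\int_\R(u^\nu-\psi_\xi^\nu)(x,t_k)\,dx$ has derivative $\int_\R(u_l^\nu-u_r^\nu)(g^\nu)'(x-\xi)\,dx<0$ and is therefore strictly monotone, so both vanishing conditions must be attained at the same $\xi$, forcing $X^\nu(t_k)=st_k$. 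The only genuinely clever step is the additive Cole--Hopf identity $h=c_l h_l+c_r h_r$; everything else is bookkeeping, and this special structure is precisely what underlies the conclusion $X_{\infty,2}^\nu=0$ in \cref{Thm-1-periodic}(1).
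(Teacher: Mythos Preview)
Your proof is correct and, for \cref{burgers-sol}, is essentially a repackaging of the paper's argument: the paper writes the Hopf integral formula as $u^\nu=(P_l+P_r)/(Q_l+Q_r)$ and verifies $Q_r(x,t_k)=e^{2\lambda(x-st_k)}Q_l(x,t_k)$ by a direct change of variables, which is exactly your cancellation of the common periodic factor $q(\cdot,t_k)$ after the substitution $h_l=e^{-\ovlul x/(2\nu)+\ovlul^2 t/(4\nu)}q(x-\ovlul t,t)$. Your additive identity $h=c_l h_l+c_r h_r$ is the conceptual content behind the paper's splitting into $P_l,P_r,Q_l,Q_r$, stated more cleanly.

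Where you genuinely diverge is in the derivation of \cref{X-k}. The paper proves \cref{X-k} \emph{before} \cref{burgers-sol}, by showing that in this special situation one may take $M=0$ in \cref{add-M}, that $\hat{X}^\nu_0=0$, and that $J^N(y,t_k)\equiv 0$ in \cref{J-N}, and then reading off $\tilde{X}^\nu(t_k)=st_k$ from the integral identity \cref{equality-shift}. You instead deduce \cref{X-k} \emph{after} \cref{burgers-sol}, from the uniqueness of the zero--mass shift under \cref{small}: since both $\xi=st_k$ and $\xi=X^\nu(t_k)$ annihilate $\int_\R(u^\nu-\psi_\xi^\nu)(x,t_k)\,dx$ and the latter is strictly monotone in $\xi$, they must coincide. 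Your route is shorter and avoids the machinery of \cref{sec-shift-infinity}; the paper's route has the advantage of fitting \cref{X-k} into the same framework that computes $X_\infty^\nu$ in general, and does not need the a priori knowledge of \cref{burgers-sol}.
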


\begin{Rem}
	It can follow from \cref{burgers-sol} and \cref{X-k} that for $ t_k \geq T_0, $ $ u^\nu(x,t_k) = \psi^\nu_{X^\nu}(x,t_k), $ which is compatible with \cref{ineq-ansatz}. And it also implies that the ansatz $ \psi_{X^\nu}^\nu $ is a suitable choice to approach the actual solution $ u^\nu. $
\end{Rem}

\vspace{0.3cm}

This paper proceeds as follows: We first prove Theorems \ref{Thm-2-periodic}, \ref{Thm-1-periodic} and \ref{Thm-vanishing} for viscous shock profiles in \cref{Sec-shock}. 
More precisely, it is shown that \cref{Thm-2-periodic} follows from Propositions \ref{Prop-shift} and \ref{Prop-ansatz} easily.  
\cref{Prop-shift} for the shift function $ X^\nu(t)$ is proved in \cref{Sec-shift}.
\cref{Prop-ansatz} is proved in \cref{Sec-ansatz-proof}, which is independent of \cref{Prop-shift}.
In \cref{Sec-Burgers}, we prove the result (1) in \cref{Thm-1-periodic} and \cref{Prop-Burgers} for the Burgers' equation.
The proof of \cref{Thm-1-periodic} is completed in \cref{Sec-counter-ex}, where a strictly convex flux $ f $ is constructed such that $X_{\infty,2}^\nu \neq 0. $
In \cref{Sec-vanishing}, we prove \cref{Thm-vanishing} for the vanishing viscosity limit for $ X_{\infty,2}^\nu. $ At last, Theorem \ref{Thm-rare} for rarefaction waves is proved in \cref{Sec-rare}. 

\vspace{0.5cm}


\section{Stability of shock profiles}\label{Sec-shock}

\cref{Thm-2-periodic} can follow from \cref{Lem-periodic}, Propositions \ref{Prop-shift} and \ref{Prop-ansatz}. In fact, it holds that
\begin{align*}
| u^\nu(x,t) - \phi^\nu(x-st-X_\infty^\nu) | & \leq | u^\nu(x,t) - \psi_{X^\nu}^\nu(x,t)| + \left| \psi_{X^\nu}^\nu(x,t) - \phi^\nu(x-X^\nu(t)) \right| \\
& \quad + | \phi^\nu(x-X^\nu(t)) - \phi^\nu(x-st-X_\infty^\nu) | \\
& \leq C e^{-\mu t} + |u_l^\nu(x,t)-\ovlul| + |u_r^\nu(x,t)-\ovlur| + Ce^{-\alpha t} \\
& \leq C e^{-\mu t},
\end{align*}
which proves \cref{ineq-Thm-shock}. Thus, it remains to prove Propositions \ref{Prop-shift} and \ref{Prop-ansatz} to finish the proof of \cref{Thm-2-periodic}.


\subsection{Shift function} \label{Sec-shift}~\\

\subsubsection{Existence and uniqueness of the shift function}~\\

For any $ \xi\in\R, $ 
$$ \int_\R (u^\nu - \psi^\nu_\xi)(x,T_0) dx = \int_\R \left( u^\nu - u_l^\nu g^\nu_\xi - u_r^\nu (1-g^\nu_\xi) \right)(x,T_0) dx. $$ 
As $ \xi \rightarrow -\infty $ (resp., $ +\infty $), $ g^\nu_\xi(x) = g^\nu(x-\xi) \rightarrow 0 $ (resp., $ 1 $), then due to \cref{end-states} and \cref{small}, one has that $ \int_\R (u^\nu - \psi^\nu_\xi)(x,T_0) dx \rightarrow +\infty $ (resp., $ -\infty $) as $ \xi \rightarrow -\infty $ (resp., $ +\infty $). Thus, there exists an $ X_0^\nu \in \R $ such that \cref{def-X-0} holds. And the uniqueness follows from \cref{small} and the strict monotonicity of $ g^\nu. $

Now we will prove the existence and uniqueness of $ X^\nu(t) $ solving the problem \eqref{ode-shift}.
\begin{Lem}\label{Lem-shift}
	The problem \eqref{ode-shift} has a unique $ C^\infty $ solution $X^\nu(t):[T_0,+\infty)\rightarrow\R,$ satisfying
	\begin{equation}\label{X-prime}
	\lvert (X^\nu)'(t) - s \rvert \leq C e^{-\alpha t},  \quad  t\geq T_0,
	\end{equation}
	where $ C>0 $ is independent of time $ t. $
\end{Lem}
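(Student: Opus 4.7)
The plan is to treat \eqref{ode-shift} as a non-autonomous ODE and establish three properties of the vector field $F^\nu(\xi,t)$: (i) smoothness and uniform Lipschitz continuity in $\xi$, (ii) uniform boundedness on $\R \times [T_0,+\infty)$, and (iii) the decay estimate $|F^\nu(\xi,t) - s| \leq Ce^{-\alpha t}$ uniformly in $\xi \in \R$. Properties (i) and (ii) deliver local existence and uniqueness via Picard--Lindel\"of together with global extension to $[T_0,+\infty)$, since boundedness precludes finite-time blow-up; property (iii) specialised to $\xi = X^\nu(t)$ immediately yields \eqref{X-prime}. The $C^\infty$-regularity of $X^\nu$ then comes from bootstrapping the ODE using smoothness of $F^\nu$.

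The first concrete step is the change of variable $y = x - \xi$ in every integral in \eqref{def-F-nu}, which transfers the $\xi$-dependence off $g^\nu$ and onto the uniformly bounded periodic functions $u_l^\nu, u_r^\nu$. The exponential decay of $(g^\nu)'$ and $(g^\nu)''$ from \cref{g2} makes all integrals absolutely convergent and $C^\infty$ in $(\xi,t)$; meanwhile, \eqref{small} together with $(g^\nu)' < 0$ and $\int_\R (g^\nu)'(y)\, dy = -1$ yields the two-sided bound
\begin{equation*}
-2(\ovlul - \ovlur) \leq \int_\R (u_l^\nu - u_r^\nu)(y+\xi,t)\,(g^\nu)'(y)\, dy \leq -(\ovlul - \ovlur)/2
\end{equation*}
for $t \geq T_0$, so that $F^\nu$ is $C^\infty$ with $\partial_\xi F^\nu$ uniformly bounded, establishing (i) and (ii).

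For (iii), I would use the travelling-wave identity $\nu (g^\nu)'' = (f'(\phi^\nu) - s)(g^\nu)'$, obtained from \eqref{ode1} and \eqref{def-g}, to rewrite the numerator of $F^\nu$ as
\begin{equation*}
\int_\R \bigl[ (u_l^\nu - u_r^\nu)(f'(\phi^\nu_\xi) - s) + (f(u_l^\nu) - f(u_r^\nu)) \bigr] (g^\nu_\xi)' \, dx.
\end{equation*}
Applying \cref{Lem-periodic} to $u_l^\nu, u_r^\nu$ after the change of variable gives $\|u_l^\nu(\cdot+\xi,t) - \ovlul\|_{L^\infty} + \|u_r^\nu(\cdot+\xi,t) - \ovlur\|_{L^\infty} \leq Ce^{-\alpha t}$ uniformly in $\xi$. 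Replacing $u_l^\nu, u_r^\nu$ by $\ovlul, \ovlur$ in both the rewritten numerator and the denominator, then invoking the Rankine--Hugoniot relation $f(\ovlul) - f(\ovlur) = s(\ovlul - \ovlur)$ and the identity $\int_\R f'(\phi^\nu(y))(g^\nu)'(y)\, dy = -s$ (a direct consequence of $(\phi^\nu)' = (\ovlul - \ovlur)(g^\nu)'$ and the fundamental theorem of calculus), the limiting numerator is $-s(\ovlul - \ovlur)$ and the limiting denominator is $-(\ovlul - \ovlur)$, so the quotient tends to $s$; the uniform lower bound on the denominator transfers the $O(e^{-\alpha t})$ error to the quotient, proving (iii). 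The main obstacle, I anticipate, is maintaining uniformity in $\xi \in \R$ throughout (since $X^\nu(t)$ itself grows linearly in $t$), and this is precisely what the change-of-variable trick secures: it reduces all $\xi$-dependence to translation in the periodic, $L^\infty$-bounded functions $u_l^\nu, u_r^\nu$, against which the $L^\infty$-decay from \cref{Lem-periodic} is $\xi$-independent.
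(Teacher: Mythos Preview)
Your proposal is correct and follows the same overall architecture as the paper: bound the denominator of $F^\nu$ away from zero via \eqref{small}, invoke Cauchy--Lipschitz (Picard--Lindel\"of) for global existence and uniqueness, and then estimate $|F^\nu - s|$ by replacing $u_l^\nu, u_r^\nu$ with $\ovlul, \ovlur$ using \cref{Lem-periodic}. The change of variable $y = x - \xi$ to secure $\xi$-uniformity is a sound observation, though the paper leaves this implicit.

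The one methodological difference lies in how the $(g^\nu)''$ term in the numerator is dispatched. You invoke the travelling-wave identity $\nu (g^\nu)'' = (f'(\phi^\nu) - s)(g^\nu)'$ and then the integral identity $\int_\R f'(\phi^\nu)(g^\nu)'\,dy = -s$, which works but is a slight detour. The paper takes the more direct route: since $(g^\nu)' \to 0$ at both infinities, one has $\int_\R (g^\nu)''\,dx = 0$, so $\int_\R (u_l^\nu - u_r^\nu)(g^\nu_\xi)''\,dx = \int_\R (u_l^\nu - \ovlul - u_r^\nu + \ovlur)(g^\nu_\xi)''\,dx = O(e^{-\alpha t})$ immediately, and the remaining two integrals are evaluated the same way. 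Your route and the paper's reach the same destination; the paper's is marginally shorter because it avoids the travelling-wave ODE altogether for this lemma.
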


\begin{proof}
	By \cref{Lem-g} and \cref{small}, the denominator of $ F^\nu(\xi,t) $ satisfies that
	\begin{align*}
	\int_\R (u_l^\nu-u_r^\nu)(x,t)(g_\xi^\nu)'(x) dx & \leq
	\frac{\ovlul-\ovlur}{2} \int_\R (g_\xi^\nu)'(x) dx \\
	& = -\frac{\ovlul-\ovlur}{2} <0.	
	\end{align*}
	Since for any $ k\geq 1, $ $ (g^\nu)^{(k)} $ is integrable and $ u_l^\nu, u_r^\nu $ are bounded and smooth, thus $ F^\nu $ is smooth and all the derivatives are bounded. Then the existence and uniqueness of $ X^\nu $ can follow from the Cauchy-Lipschitz theorem.
	
	Now we prove \eqref{X-prime}.
	By Lemmas \ref{Lem-periodic} and \ref{Lem-g}, one can get that
	\begin{align*}
	\int_{-\infty}^{+\infty} (u_l^\nu-u_r^\nu) (g_{X^\nu}^\nu)'' dx &  = \int_{-\infty}^{+\infty} (u_l^\nu- \ovlul-u_r+\ovlur) (g_{X^\nu}^\nu)'' dx = O(e^{-\alpha t}), \\
	\int_{-\infty}^{+\infty}(f(u_l^\nu)-f(u_r^\nu)) (g_{X^\nu}^\nu)' dx & = - f(\ovlul)+f(\ovlur) + O(e^{-\alpha t}), \\
	\int_{-\infty}^{+\infty} (u_l^\nu-u_r^\nu) (g_{X^\nu}^\nu)' dx & = -\ovlul + \ovlur + O(e^{-\alpha t}).
	\end{align*}
	Therefore, \eqref{X-prime} holds true.
\end{proof}

\vspace{0.2cm}

\subsubsection{The limit of the shift function as $ t\rightarrow +\infty $}\label{sec-shift-infinity}~\\

In order to compute $ \lim\limits_{t\rightarrow +\infty} X^\nu(t)-st $ in terms of the initial data $ u_0(x), $ the information of the solution $ u^\nu(x,t) $ for $ t \in [0,T_0] $ should be used. However, 
when $t\in[0,T_0]$, it may fail to find a unique $\xi$ such that $ \int_\R \left( u^\nu - \psi_{\xi}^\nu \right)(x,t) dx = 0 $ or fail to ensure the denominator of $ F^\nu, $ $ \int_\R (u_l^\nu-u_r^\nu)(x,t)(g_\xi^\nu)'(x) dx, $ is non-zero. Therefore, we need the following modifications to extend the definition of $X^\nu(t)$ on $ [0,T_0]. $

For the bounded periodic solutions $ u_l^\nu $ and $ u_r^\nu, $ one can first choose a large number $ M > 0 $ such that for all $ t \geq 0, $
\begin{equation}\label{add-M}
\begin{aligned}
\int_\R (u_l^\nu-u_r^\nu)(x,t)(g_\xi^\nu)'(x) dx - M & \leq C \int_\R \left| (g_\xi^\nu)'(x) \right|dx - M \\
& = C - M < 0.
\end{aligned}
\end{equation}
Thanks to \cref{add-M}, there exists a unique solution $ \hat{X}^\nu(t) \in C^1[0,T_0] $ to the problem
\begin{equation}\label{ode-shift-2}
\begin{cases}
(\hat{X}^\nu)'(t) = \hat{F}^\nu(\hat{X}^\nu(t),t), & \quad t \leq T_0, \\
\hat{X}^\nu(T_0)=X_0^\nu, & 
\end{cases}
\end{equation}
where 
\begin{equation}\label{def-F-nu-2}
\hat{F}^\nu(\xi,t) :=
\dfrac{\int_\R \left[ \nu (u_l^\nu-u_r^\nu) (g_\xi^\nu)''  +(f(u_l^\nu)-f(u_r^\nu)) (g_\xi^\nu)' \right] dx}{\int_\R (u_l^\nu-u_r^\nu) (g_\xi^\nu)' dx - M}.
\end{equation}
The proof of existence and uniqueness of $ \hat{X}^\nu $ is similar to that in \cref{Lem-shift}.
Now we claim that
\begin{equation}\label{def-X-hat-0}
M (X_0^\nu - \hat{X}^\nu_0) + \int_\R \left(u_0(x)-\psi_{\hat{X}^\nu_0}^\nu(x,0) \right) dx = 0,
\end{equation}
where $ \hat{X}^\nu_0 := \hat{X}^\nu(0). $

Indeed, for any $ N>0, $ one can choose a cut-off function $ \varphi_N(x) \in C^\infty_0(\R) $ satisfying $ \varphi_N(x) = 1, $ if $ |x|<N, $ and $ \varphi_N(x)=0, $ if $ |x|>N+1. $ Then by multiplying $ \varphi_N(x) $ on each side of \eqref{equ-u-psi} and integrating by parts, one can get that for any $ t>0, $
\begin{align}
& \int_\R (u^\nu-\psi_{\hat{X}^\nu}^\nu)(x,t) \varphi_N(x) dx \label{equ-11} \\
= & \int_\R (u^\nu-\psi_{\hat{X}^\nu}^\nu)(x,0) \varphi_N(x) dx  + \nu \int_0^{t} \int_\R (u^\nu-\psi_{\hat{X}^\nu}^\nu) \varphi_N'' dx d\tau \notag \\
& + \int_0^{t} \int_\R \left( f(u^\nu)-f(\psi^\nu_{\hat{X}^\nu}) \right) \varphi_N' dxd\tau - \int_0^{t} \int_\R h_{\hat{X}^\nu}^\nu \varphi_N dxd\tau, \notag
\end{align}
Thus, by applying Lemmas \ref{Lem-g} and \ref{Lem-psi}, 
one can take limit $ N \rightarrow +\infty $ and use the dominated convergence theorem in \cref{equ-11} to get that
\begin{equation}\label{equality-1}
\int_\R (u^\nu-\psi_{\hat{X}^\nu}^\nu)(x,t) dx = \int_\R \left(u_0(x)-\psi_{\hat{X}^\nu_0}^\nu(x,0)\right) dx - \int_0^t \int_\R h_{\hat{X}^\nu}^\nu(x,\tau) dx d\tau.
\end{equation}
It follows from \cref{source-1} and \cref{ode-shift-2} that 
\begin{align*}
- \int_\R h_{\hat{X}^\nu}^\nu(x,t) dx = M \frac{d}{dt} \hat{X}^\nu(t).
\end{align*}
This, together with \cref{def-X-0} and \cref{equality-1}, yields \cref{def-X-hat-0}.

\vspace{0.2cm}

Now, we define 
\begin{equation}\label{def-X-tilde}
\tilde{X}^\nu(t) := \begin{cases}
X^\nu(t), \quad t>T_0, & \\
\hat{X}^\nu(t), \quad 0\leq t \leq T_0, &
\end{cases}
\end{equation}
then $ \tilde{X}^\nu $ is a Lipschitz continuous curve on $ [0,+\infty). $
For $ y \in (0,1), N\in \mathbb{N}^* $ and $ t>T_0, $ we define the domain
\begin{align*}
\Omega^N_y & := \{(x,\tau):~ \tilde{X}^\nu(\tau) +(-N+y) p_l \leq x\leq \tilde{X}^\nu(\tau)+ (N+y) p_r,~ 0 \leq \tau\leq t \};
\end{align*}
see Figure \ref{Fig-domain}. 
\begin{figure}[htbp!]
	\scalebox{0.5}{\includegraphics{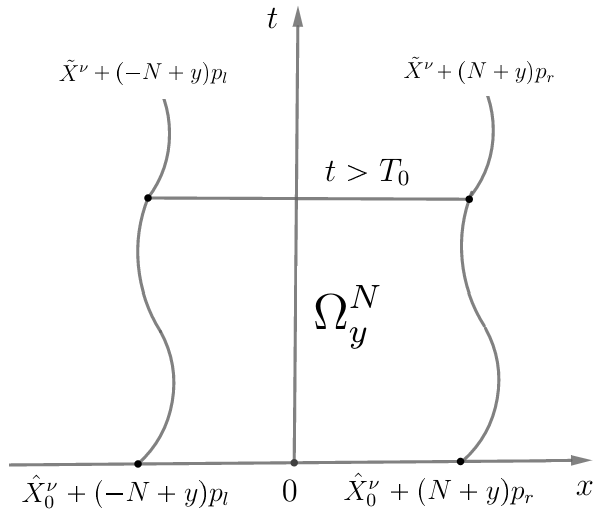}}
	\caption{}\label{Fig-domain}
\end{figure}


It follows from the equations of $ u_l^\nu $ and $ u_r^\nu $ that
$$ \iint_{\Omega^N_y} \left\lbrace \left( \p_t u_l^\nu + \p_x f(u_l^\nu) - \nu \p_x^2 u_l^\nu \right) g_{\tilde{X}^\nu}^\nu+ \left( \p_t u_r^\nu + \p_x f(u_r^\nu) - \nu \p_x^2 u_r^\nu \right)(1-g_{\tilde{X}^\nu}^\nu) \right\rbrace dx d\tau = 0. $$
Then integration by parts yields that
\begin{equation}\label{00}
\begin{aligned}
& \iint_{\Omega^N_y} \left\lbrace -(\tilde{X}^\nu)'(t) (u_l^\nu-u_r^\nu) + \left(f(u_l^\nu)- \nu\partial_x u_l^\nu\right)- (f(u_r^\nu)- \nu\partial_x u_r^\nu) \right\rbrace (g_{\tilde{X}^\nu}^\nu)' dx d\tau \\
= ~& A^N(y,t) - A^N(y,0) - B_l^N(y,t) + B_r^N(y,t),
\end{aligned}
\end{equation}
where 
\begin{equation}\label{terms-div}
\begin{aligned}
A^N(y,t) &:= \int_{\tilde{X}^\nu(t)+(-N+y)p_l}^{\tilde{X}^\nu(t)+(N+y)p_r}\Big[u_l^\nu(x,t)g_{\tilde{X}^\nu}^\nu(x)+u_r^\nu(x,t)(1-g_{\tilde{X}^\nu}^\nu(x))\Big] dx, \\
A^N(y,0) &:= \int_{\hat{X}_0^\nu + (-N+y)p_l}^{\hat{X}_0^\nu + (N+y)p_r} \left[u_l^\nu(x,0)g_{\hat{X}_0^\nu}^\nu(x)+u_r^\nu(x,0) \left(1-g_{\hat{X}_0^\nu}^\nu(x)\right)\right] dx, \\
B_l^N(y,t) &:= \int_{0}^{t} \Big\{ \left(f(u_l^\nu)- \nu \p_x u_l^\nu \right) g_{\tilde{X}^\nu}^\nu + \left(f(u_r^\nu)- \nu\p_x u_r^\nu \right) (1-g_{\tilde{X}^\nu}^\nu) \\
& \qquad \qquad - (\tilde{X}^\nu)'(\tau) \left[ u_l^\nu g_{\tilde{X}^\nu}^\nu + u_r^\nu (1-g_{\tilde{X}^\nu}^\nu) \right] 
\Big\} (\tilde{X}^\nu(\tau)+(-N+y)p_l,\tau) d\tau, \\
B_r^N(y,t) &:= \int_{0}^{t} \Big\{ \left(f(u_l^\nu)- \nu \p_x u_l^\nu \right) g_{\tilde{X}^\nu}^\nu + \left(f(u_r^\nu)- \nu\p_x u_r^\nu \right) (1-g_{\tilde{X}^\nu}^\nu) \\
& \qquad \qquad - (\tilde{X}^\nu)'(\tau) \left[ u_l^\nu g_{\tilde{X}^\nu}^\nu + u_r^\nu (1-g_{\tilde{X}^\nu}^\nu) \right] 
\Big\} (\tilde{X}^\nu(\tau)+(N+y)p_r,\tau) d\tau.
\end{aligned}	
\end{equation}

It follows from \cref{ode-shift} and \cref{ode-shift-2} that the left hand side of \eqref{00}
\begin{align*}
& \iint_{\Omega^N_y} \quad \cdots ~ dxd\tau = \iint_{\Omega^N_y \cap \{ 0<\tau<T_0\}} \cdots ~dxd\tau \quad + \iint_{\Omega^N_y \cap \{ T_0<\tau<t\}} \cdots ~dxd\tau \\
\rightarrow \quad & - M \int_{0}^{T_0} (\hat{X}^\nu)'(t) dt + 0 =-  M(X_0^\nu - \hat{X}_0^\nu) \qquad \text{ as } N\rightarrow +\infty.
\end{align*}
Thus it remains to evaluate the right hand side of \eqref{00} as $N\rightarrow +\infty$.
\vspace{0.6cm}

\textbf{(i) The integrals on $ \{\tau =0\} $ and $ \{\tau = t \} $.}

\vspace{0.2cm}

Set
\begin{equation}\label{def-wlr}
w_l^\nu(x,t):=u_l^\nu(x,t)-\ovlul, \quad w_r^\nu(x,t):=u_r^\nu(x,t)-\ovlur. 
\end{equation}
Then it follows from \cref{Lem-periodic} that $ \| w_l^\nu \|_{L^\infty} + \| w_r^\nu \|_{L^\infty} \leq Ce^{-\alpha t}. $
Since $ w_{0l} $ and $ w_{0r} $ are both of zero average, one has that
\begin{equation}\label{J-N}
\begin{aligned}
& J^N(y,t) =:~  A^N(y,t) - A^N(y,0) \\ 
=~ & \int_{(-N+y)p_l}^{(N+y)p_r} \Big[ w_l^\nu(x+\tilde{X}^\nu(t),t) g^\nu(x) + w_r^\nu(x+\tilde{X}^\nu(t),t) \left(1-g^\nu(x)\right) \Big] dx  \\
& -\int_{\hat{X}^\nu_0+(-N+y)p_l}^{\hat{X}^\nu_0+(N+y)p_r} \Big[ w_{0l}(x) g^\nu (x-\hat{X}^\nu_0 ) + w_{0r}(x)\left(1-g^\nu (x-\hat{X}^\nu_0 )\right)\Big] dx \\
= ~& O(e^{-\alpha t}) +  \int_{\hat{X}^\nu_0+(-N+y)p_l}^{\hat{X}^\nu_0+yp_l} (w_{0l}-w_{0r})(x) \left(1-g^\nu(x-\hat{X}^\nu_0)\right) dx \\
& - \int_{\hat{X}^\nu_0+yp_l}^{\hat{X}^\nu_0+yp_r} \Big[ w_{0l}(x) g^\nu(x-\hat{X}^\nu_0) + w_{0r}(x)\left(1-g^\nu(x-\hat{X}^\nu_0)\right)\Big] dx \\
& - \int_{\hat{X}^\nu_0+yp_r}^{\hat{X}^\nu_0+(N+y)p_r} (w_{0l}-w_{0r})(x) g^\nu(x-\hat{X}^\nu_0) dx.
\end{aligned}
\end{equation}
Then one can get that
\begin{equation}\label{J-def}
\begin{aligned}
J(y,t) := & \lim\limits_{N\rightarrow \infty} J^N(y,t) \\
= ~& O(e^{-\alpha t}) + \int_{-\infty}^{\hat{X}^\nu_0+yp_l} (w_{0l}-w_{0r})(x) \left(1-g^\nu(x-\hat{X}^\nu_0)\right) dx \\
&  - \int_{\hat{X}^\nu_0+yp_l}^{\hat{X}^\nu_0+yp_r} \Big[ w_{0l}(x) g^\nu(x-\hat{X}^\nu_0) + w_{0r}(x)\left(1-g^\nu(x-\hat{X}^\nu_0)\right)\Big] dx \\
& - \int_{\hat{X}^\nu_0+yp_r}^{+\infty} (w_{0l}-w_{0r})(x) g^\nu(x-\hat{X}^\nu_0) dx.
\end{aligned}
\end{equation}
By \cref{def-X-hat-0}, it holds that
\begin{align}
M( X_0^\nu & - \hat{X}^\nu_0) = -\int_\R \Big[ u_0(x) - \phi^\nu(x-\hat{X}^\nu_0) - w_{0l}(x) g^\nu(x-\hat{X}^\nu_0)  \label{M-equality}\\ 
& \qquad \qquad \qquad \quad - w_{0r}(x) \left(1-g^\nu(x-\hat{X}^\nu_0)\right) \Big] dx \notag \\
& = -\int_{-\infty}^{0} \left( u_0-\phi^\nu-w_{0l} \right)(x)dx - \int_0^{+\infty} \left( u_0-\phi^\nu-w_{0r} \right)(x)dx \notag \\
& \quad + (\ovlul-\ovlur) \hat{X}^\nu_0 - \int_{-\infty}^{0} (w_{0l}-w_{0r})(x) \left(1-g^\nu(x-\hat{X}^\nu_0)\right) dx \notag \\
& \quad + \int_{0}^{+\infty} (w_{0l}-w_{0r})(x) g^\nu(x-\hat{X}^\nu_0) dx. \notag
\end{align}
This, together with \cref{J-def}, yields that
\begin{align}
J(y,t) = ~& O(e^{-\alpha t}) - M(X_0^\nu - \hat{X}^\nu_0) + (\ovlul-\ovlur) \hat{X}^\nu_0 \notag \\
& - \int_{-\infty}^{0} \left( u_0-\phi^\nu-w_{0l} \right)(x)dx - \int_0^{+\infty} \left( u_0-\phi^\nu-w_{0r} \right)(x)dx \notag \\
& + \int_0^{\hat{X}^\nu_0+yp_l} w_{0l}(x)dx - \int_0^{\hat{X}^\nu_0+yp_r} w_{0r}(x)dx. \notag
\end{align}
Note that for $ i = l, r, $ since $ w_{0i} $ has zero average, $ \int_0^y w_{0i}(x) dx $ is periodic with respective to $ y $ with period $ p_i$. Therefore,
\begin{align*}
\int_0^1 \int_0^{\hat{X}^\nu_0+yp_i} w_{0i}(x)dx dy & = \frac{1}{p_i} \int_0^{p_i} \int_0^{\hat{X}^\nu_0+y} w_{0i}(x)dx dy  \\
& = \frac{1}{p_i} \int_0^{p_i} \int_0^{y} w_{0i}(x)dx dy.
\end{align*}
Thus one can get that
\begin{align}
\int_{0}^{1} J(y,t) dy = ~& O(e^{-\alpha t}) - M(X_0^\nu - \hat{X}^\nu_0) + (\ovlul-\ovlur) \hat{X}^\nu_0 \label{J}  \\
& - \int_{-\infty}^{0} \left( u_0-\phi^\nu-w_{0l} \right)(x)dx - \int_0^{+\infty} \left( u_0-\phi^\nu-w_{0r} \right)(x)dx \notag \\
& + \frac{1}{p_l} \int_0^{p_l} \int_0^{y} w_{0l}(x)dx dy  - \frac{1}{p_r} \int_0^{p_r} \int_0^{y} w_{0r}(x)dx dy . \notag
\end{align}
\vspace{0.5cm}

\textbf{(ii) The integrals on two sides.} 

\vspace{0.2cm}

Since $ u_l^\nu $ is periodic, it holds that
\begin{equation}\label{B-l-N}
\begin{aligned}
B_l^N(y,t) = & \int_{0}^{t} \Big\{ \left(f(u_l^\nu)- \nu \p_x u_l^\nu \right)(\tilde{X}^\nu(\tau)+yp_l,\tau)~ g^\nu\left((-N+y)p_l\right) \\
& \qquad -  (\tilde{X}^\nu)'(\tau)~ u_l^\nu(\tilde{X}^\nu(\tau)+yp_l,\tau) \\
& \qquad + \left[ \cdots \right] (1-g_{\tilde{X}^\nu}^\nu)(\tilde{X}^\nu(\tau)+(-N+y)p_l,\tau) 
\Big\} d\tau,
\end{aligned}
\end{equation}
where $ [\cdots] $ denotes the remaining terms which are bounded. Then taking the limit $N\rightarrow +\infty$ in \cref{B-l-N} and using \cref{Lem-g}, one can get that
\begin{align*}
\lim\limits_{N\rightarrow +\infty} \int_{0}^{1} B_l^N(y,t) dy & =  \int_{0}^{t} \frac{1}{p_l} \int_{0}^{p_l} f(u_l^\nu)(\tilde{X}^\nu + x,\tau) dxd\tau - \int_{0}^{t} (\tilde{X}^\nu)'(\tau) \ovlul d\tau  \\
& = \int_{0}^{t} \frac{1}{p_l} \int_{0}^{p_l} f(u_l^\nu)(x,\tau) dx d\tau - \ovlul (\tilde{X}^\nu(t) - \hat{X}_0^\nu).
\end{align*}
Similarly, it holds that
\begin{align*}
\lim\limits_{N\rightarrow +\infty} \int_{0}^{1} B_r^N(y,t) dy = \int_{0}^{t} \frac{1}{p_r} \int_{0}^{p_r} f(u_r^\nu)(x,\tau)  dx d\tau - \ovlur (\tilde{X}^\nu(t) - \hat{X}_0^\nu).
\end{align*}

\vspace{0.3cm}

Now, with the calculations in \textbf{(i)} and \textbf{(ii)}, one can integrate the equation \eqref{00} with respect to $y$ over $(0,1),$ and then let $ N \rightarrow + \infty, $ to get that for any $ t>T_0, $
\begin{equation}\label{equality-shift}
\begin{aligned}
& \int_{0}^{1} J(y,t) dy + (\ovlul-\ovlur)(\tilde{X}^\nu(t) - \hat{X}_0^\nu) \\
= & \int_{0}^{t} \Big[ \frac{1}{p_l} \int_{0}^{p_l} f(u_l^\nu)(x,\tau) dx - \frac{1}{p_r} \int_{0}^{p_r} f(u_r^\nu)(x,\tau) dx \Big] d\tau - M(X_0^\nu - \hat{X}^\nu_0).
\end{aligned}
\end{equation}
Note also that for $ i = l,r, $
\begin{align*}
\int_{0}^{t} \frac{1}{p_i} \int_{0}^{p_i} f(u_i^\nu) dy d\tau = & \int_{0}^{t} \frac{1}{p_i} \int_{0}^{p_i} \left( f(u_i^\nu)- f(\ovl{u}_i) \right)  dyd\tau + f(\ovl{u}_i) t \\
& = \int_{0}^{+\infty} \frac{1}{p_i} \int_{0}^{p_i} \left( f(u_i^\nu)- f(\ovl{u}_i) \right) dyd\tau + O(e^{-\alpha t}) + f(\ovl{u}_i) t,
\end{align*}
where \cref{Lem-periodic} is used. Then it can follow from \cref{J} and \cref{equality-shift} that for $ t>T_0, $
\begin{equation}\label{equ}
X^\nu(t)-st = \tilde{X}^\nu(t)-st = O(e^{-\alpha t}) + X_\infty^\nu,
\end{equation}
where $ X_\infty^\nu $ is defined in \cref{Thm-2-periodic}. The proof of \cref{Prop-shift} is finished.

\vspace{0.6cm}


\subsection{Decay to ansatz}\label{Sec-ansatz-proof}~\\

In this section, we will prove \cref{Prop-ansatz}. First, $ u^\nu(x,t) $ will be proved to be close to $ u^\nu_l(x,t) $ (resp., $ u^\nu_r(x,t) $) in the region $ x<st-N_{\e} $ (resp. $ x>st-N_{\e}$) with large enough $t$ and $N_{\e}$ (independent of time); see \cref{prop-unidecay}.
Then motivated by \cite{Oleinik1960}, the equation of the anti-derivative variable of $u^\nu-\psi_{X^\nu}^\nu$ is studied and the comparison principle is applied to prove \cref{Prop-ansatz}.

\vspace{0.3cm}

\subsubsection{Time-independent estimates}~\\

The following result follows directly from \cref{Lem-decay-ulr}.

\begin{Lem}\label{Lem-psi}
	Under the assumptions of \cref{Thm-2-periodic}, there holds that
	\begin{align*}
	|u^\nu(x,t)-\psi_\xi^\nu(x,t)| \leq C(t)e^{\beta |\xi|} e^{-\beta |x-\xi|},  \qquad \xi, x \in\R, t \geq 0,
	\end{align*}
	where $ C(t)>0$ is bounded on any compact subset of $ [0,+\infty). $
\end{Lem}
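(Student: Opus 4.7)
The plan is to start from the decomposition
\[
u^\nu - \psi_\xi^\nu = (u^\nu - u_l^\nu)\, g_\xi^\nu + (u^\nu - u_r^\nu)\,(1 - g_\xi^\nu),
\]
which is immediate from the definition \cref{def-psi} of $\psi_\xi^\nu$, and to bound each of the four factors pointwise by a one-sided exponential. Writing $(a)_+ := \max\{a,0\}$ and recalling $\beta = \min\{\beta_0,\beta_1,\beta_2\}$, the combination of \cref{Lem-decay-ulr} with the trivial $L^\infty$-boundedness of $u^\nu$, $u_l^\nu$, $u_r^\nu$ yields
\[
|u^\nu - u_l^\nu|(x,t)\leq C(t)\,e^{-\beta(-x)_+}, \qquad |u^\nu - u_r^\nu|(x,t)\leq C(t)\,e^{-\beta x_+},
\]
while \cref{Lem-g} yields
\[
g_\xi^\nu(x)\leq C\,e^{-\beta(x-\xi)_+}, \qquad 1-g_\xi^\nu(x)\leq C\,e^{-\beta(\xi-x)_+}.
\]
In each case the inequality is just the union of the trivial bound on the half-line where the exponential exceeds $1$ with the genuine exponential decay on the other half-line.

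Multiplying the two pairs of factors and summing then gives
\[
|u^\nu - \psi_\xi^\nu|(x,t) \leq C(t)\Big[ e^{-\beta\left((-x)_+ + (x-\xi)_+\right)} + e^{-\beta\left(x_+ + (\xi-x)_+\right)} \Big].
\]
The proof then reduces to the elementary geometric inequalities
\[
(-x)_+ + (x-\xi)_+ \geq |x-\xi| - |\xi|, \qquad x_+ + (\xi - x)_+ \geq |x-\xi| - |\xi|, \qquad \forall\, x,\xi\in\R,
\]
each of which I would verify by a short case analysis on the signs of $x$, $\xi$, and $x-\xi$ (equivalently, on which of the four quadrants the pair $(x,\xi)$ lies in). Plugging these into the previous display produces the claimed bound $|u^\nu-\psi_\xi^\nu|\leq 2C(t)\,e^{\beta|\xi|}e^{-\beta|x-\xi|}$.

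There is no analytic obstacle here: the whole argument is a bookkeeping exercise that combines the decay estimates of \cref{Lem-decay-ulr} and \cref{Lem-g} with the elementary inequalities above, and the factor $e^{\beta|\xi|}$ on the right-hand side is exactly the one needed to absorb the offset between the origin (which anchors the decay of $u^\nu - u_l^\nu$ and $u^\nu - u_r^\nu$) and the shifted origin $\xi$ (which anchors the decay of $g_\xi^\nu$ and $1-g_\xi^\nu$). The boundedness of $C(t)$ on compact subsets of $[0,+\infty)$ is inherited directly from the corresponding property in \cref{Lem-decay-ulr}.
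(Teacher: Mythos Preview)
Your argument is correct and is exactly the route the paper has in mind: the paper states only that the lemma ``follows directly from \cref{Lem-decay-ulr},'' and your decomposition $u^\nu-\psi_\xi^\nu=(u^\nu-u_l^\nu)g_\xi^\nu+(u^\nu-u_r^\nu)(1-g_\xi^\nu)$ together with the one-sided exponential bounds from \cref{Lem-decay-ulr} and \cref{Lem-g} is precisely how one makes that remark rigorous. The elementary inequalities $(-x)_++(x-\xi)_+\geq |x-\xi|-|\xi|$ and $x_++(\xi-x)_+\geq |x-\xi|-|\xi|$ are the only nontrivial detail, and your case analysis handles them.
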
 


Then by Lemmas \ref{Lem-decay-ulr} and \ref{Lem-psi}, for any $ x\in\R, t\geq 0, $ one can define the anti-derivative variables:
\begin{align}
U_l^\nu(x,t)& :=\int_{-\infty}^{x} (u^\nu-u_l^\nu)(y,t) dy, \label{def-Ul} \\
U_r^\nu(x,t)& :=\int_{x}^{+\infty} (u^\nu-u_r^\nu)(y,t) dy, \label{def-Ur} \\
\check{U}_\xi^\nu(x,t)& :=\int_{-\infty}^{x} (u^\nu-\psi_\xi^\nu)(y,t) dy,\label{def-U-check} \\
\hat{U}_\xi^\nu(x,t)& :=\int_{x}^{+\infty} (u^\nu-\psi_\xi^\nu)(y,t) dy. \label{def-U-hat}
\end{align}

\begin{Lem}\label{Lem-anti}
	For any $ C^\infty $ curve $ \xi(t):[0,+\infty) \rightarrow \R $ with bounded derivatives, the functions $ U_l^\nu, U_r^\nu, \check{U}_\xi^\nu $ and $ \hat{U}_\xi^\nu $ defined above satisfy the following equations:
	\begin{align}
	\p_t U_l^\nu - \nu \p_x^2 U_l^\nu & = f(u_l^\nu) - f(u^\nu), \qquad x\in\R, t>1, \label{U-l} \\
	\p_t U_r^\nu - \nu \p_x^2 U_r^\nu & = f(u^\nu) - f(u_r^\nu), \qquad x\in\R, t>1, \label{U-r} \\
	\p_t \check{U}_\xi^\nu - \nu \p_x^2 \check{U}_\xi^\nu & = f(\psi_\xi^\nu) - f(u^\nu) -\int_{-\infty}^{x} h_\xi^\nu(y,t) dy, \quad x\in\R, t>1, \label{U-check} \\
	\p_t \hat{U}_\xi^\nu - \nu \p_x^2 \hat{U}_\xi^\nu & = f(u^\nu) -f(\psi_\xi^\nu) -\int_{x}^{+\infty} h_\xi^\nu(y,t) dy, \quad x\in\R, t>1, \label{U-hat}
	\end{align}
	where the derivatives appearing in these equations are all continuous in $ \R\times [1,+\infty). $
\end{Lem}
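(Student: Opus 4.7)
The plan is to derive each of the four equations by subtracting the PDEs for the relevant pair (e.g.\ $u^\nu$ and $u_l^\nu$, or $u^\nu$ and $\psi_\xi^\nu$) to obtain a conservation law for the difference, and then integrating in $x$ from an infinite endpoint. For $U_l^\nu$, the difference satisfies
\begin{equation*}
\partial_t(u^\nu - u_l^\nu) + \partial_x \bigl( f(u^\nu) - f(u_l^\nu) \bigr) = \nu \partial_x^2 (u^\nu - u_l^\nu),
\end{equation*}
and integrating from $-\infty$ to $x$ yields \cref{U-l} once the boundary terms $f(u^\nu)-f(u_l^\nu)$ and $\nu \partial_x(u^\nu - u_l^\nu)$ are shown to vanish as $y \to -\infty$. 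The equations \cref{U-r}, \cref{U-check}, \cref{U-hat} follow by the same pattern, starting instead from the difference of the equations for $u^\nu$ and $u_r^\nu$, or from \cref{equ-u-psi}; in the last two cases the antiderivative of $h_\xi^\nu$ appears as an additional source term.

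The main technical hurdle is to promote the pointwise exponential decay provided by \cref{Lem-decay-ulr} into decay of the first-order spatial derivative, since only the former is supplied by hypothesis. I propose to write the difference as the solution of a linear parabolic Cauchy problem with bounded coefficients — for instance, $v := u^\nu - u_l^\nu$ satisfies $\partial_t v + \partial_x(a^\nu v) = \nu \partial_x^2 v$ with $a^\nu(x,t) = \int_0^1 f'(\theta u^\nu + (1-\theta) u_l^\nu)\, d\theta$ bounded uniformly in the $L^\infty$ bound on $u^\nu$ and $u_l^\nu$ — represent it through the associated fundamental solution, and use standard Gaussian heat-kernel bounds to convert the pointwise exponential decay of the data at the appropriate infinity into exponential decay of $\partial_x v$, uniformly for $t \geq 1$. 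The same argument applies to $u^\nu - \psi_\xi^\nu$. With these derivative bounds in hand, the boundary contributions at $\pm\infty$ vanish and the fundamental theorem of calculus produces each of \cref{U-l} through \cref{U-hat}.

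For \cref{U-check} and \cref{U-hat} one must additionally check that $h_\xi^\nu$ is integrable at the infinite endpoint. This is straightforward from the representation \cref{source-2}: every summand carries either a factor $(g_\xi^\nu)'$ or $(g_\xi^\nu)''$, which decays exponentially in $|x - \xi|$ by \cref{Lem-g}, or a factor $\psi_\xi^\nu - u_l^\nu = (u_r^\nu - u_l^\nu)(1 - g_\xi^\nu)$ or $\psi_\xi^\nu - u_r^\nu = -(u_r^\nu - u_l^\nu) g_\xi^\nu$ that decays exponentially on the relevant side, while all remaining factors (including $\partial_x u_l^\nu$ and $\partial_x u_r^\nu$) are bounded for $t \geq 1$ by \cref{Lem-periodic}. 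Finally, the continuity of the displayed space and time derivatives on $\R \times [1, +\infty)$ follows by differentiation under the integral sign, justified by dominated convergence with the above exponential majorants, together with standard interior parabolic regularity for $u^\nu$ and for the periodic solutions $u_l^\nu, u_r^\nu$.
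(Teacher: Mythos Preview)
Your approach is correct but differs from the paper's in a notable way. You propose the direct route: integrate the difference equation from an infinite endpoint, and separately verify that the boundary contributions $f(u^\nu)-f(u_l^\nu)$ and $\nu\,\partial_x(u^\nu-u_l^\nu)$ vanish there, the latter by representing the difference through the fundamental solution of a linear parabolic equation with bounded smooth coefficients and invoking Gaussian derivative bounds (or, equivalently, interior parabolic estimates). This is sound; the coefficients are smooth for $t>0$ and uniformly bounded, so the required derivative decay follows.

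The paper instead works ``backwards'': it writes down a candidate $V$ for $U_l^\nu$ as the Duhamel solution of the \emph{free} heat equation $\partial_t V - \nu\partial_x^2 V = f(u_l^\nu)-f(u^\nu)$ with data $U_l^\nu(\cdot,1)$, observes from the explicit heat-kernel formula that $V$ vanishes as $x\to-\infty$, and then checks that $\partial_x V = u^\nu - u_l^\nu$ by comparing with the Duhamel representation of $u^\nu-u_l^\nu$ itself (which solves $\partial_t w - \nu\partial_x^2 w = \partial_x(f(u_l^\nu)-f(u^\nu))$ and hence matches after one integration by parts in the convolution). This forces $V = U_l^\nu$, and the equation for $U_l^\nu$ comes for free. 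The advantage is that one never needs to prove decay of $\partial_x(u^\nu-u_l^\nu)$ or justify $\partial_t$ under the integral directly; everything is read off from the explicit heat-kernel convolutions. Your approach is more transparent about where each term comes from, at the cost of an extra regularity step. For the integrability of $h_\xi^\nu$ and the continuity of the displayed derivatives, your treatment is essentially the same as the paper's.
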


\begin{proof}
	Here we prove only \eqref{U-l} and \eqref{U-check}, since the proofs of the other two are similar.
	
	\begin{itemize}
		\item[(1) ] To prove \eqref{U-l}, for any $ T>1, $ one considers the following problem:
		\begin{equation}\label{heat}
		\begin{cases}
		& \p_t V - \nu \p_x^2 V = f(u_l^\nu)-f(u^\nu),  \quad x\in \R, 1<t\leq T,\\
		& V(x,1) = U_l^\nu(x,1).
		\end{cases}
		\end{equation}
		It follows from \cref{Lem-psi} that
		\begin{equation}\label{inequ1}	
		\begin{cases}
		| f(u_l^\nu)-f(u^\nu) | \leq C(T) e^{\beta x}  &\forall x\in \R, 1<t\leq T,  \\
		| U_l^\nu(x,1) | \leq C e^{\beta x}   & \forall x\in \R.
		\end{cases}	
		\end{equation}
		Note that 
		$$e^{\beta |x|}\leq C(T) e^{\frac{1}{4\nu T}|x|^2}, $$
		and the initial data $ U_l^\nu(x,1) $ and the source term $ f(u_l^\nu)-f(u^\nu) $ are smooth functions, then by the standard parabolic theories (see \cite[Chapter~1,~Theorem~12]{Friedman1964}), the function
		\begin{equation}\label{V}
		V(x,t) = \int_{0}^{t-1} K^\tau(\cdot) * (f(u_l^\nu)-f(u^\nu))(\cdot,t-\tau) d\tau +  K^{t-1}(\cdot)*U_l^\nu(\cdot,1)
		\end{equation}
		solves \eqref{heat} and all the derivatives of $ V $ appearing in the equation exist and are continuous in $ \R\times[1,T], $ where $ K^t(x) := (2\pi\nu  t)^{-\frac{1}{2}} e^{-\frac{x^2}{4\nu t}} $ denotes the heat kernel. It then follows from \eqref{inequ1} and \eqref{V} that $ V(x,t) $ vanishes as $ x \rightarrow -\infty. $ Then by \eqref{V} and the equations of $ u^\nu $ and $ u_l^\nu, $ it holds that
		\begin{equation*}
		\begin{aligned}
		\p_x V(x,t) & = \int_{0}^{t-1} \left[ \p_x K^\tau * (f(u_l^\nu)-f(u)) \right](x,t-\tau) d\tau \\
		& \quad +\left[ K^{t-1}*(u-u_l^\nu) \right](x,1)	\\
		& = (u-u_l^\nu)(x,t), \qquad\qquad  x\in\R, 1\leq t \leq T,
		\end{aligned}	
		\end{equation*}
		which implies that $ V(x,t) =U_l^\nu(x,t) $ for all $ x\in \R $ and $ 1\leq t \leq T. $ And since $ T>1 $ is arbitrary, \eqref{U-l} holds true.
		
		\item[(2) ] Now we prove \eqref{U-check}. By integrating \eqref{source-1} with the space variable on $ (-\infty,x), $ one can get from Lemmas \ref{Lem-periodic} and \ref{Lem-g} that 
		\begin{equation*}
		\begin{aligned}
		\int_{-\infty}^{x} h_\xi^\nu~dy =  & \left(f(\psi_\xi^\nu)-f(u_l^\nu) \right) g_\xi^\nu + \left(f(\psi_\xi^\nu)-f(u_r^\nu)\right) (1-g_\xi^\nu) \\
		& - 2\nu (u_l^\nu-u_r^\nu)(g_\xi^\nu)' + \int_{-\infty}^{x} (f(u_l^\nu)-f(u_r^\nu)) (g_\xi^\nu)' ~dy \\
		& - \xi'(t) \int_{-\infty}^{x} (u_l^\nu-u_r^\nu) (g_\xi^\nu)'~  dy + \nu \int_{-\infty}^{x} (u_l^\nu-u_r^\nu) (g_\xi^\nu)'' ~dy,
		\end{aligned}
		\end{equation*}
		is smooth. And it follows from Lemmas \ref{Lem-g}, \ref{Lem-psi} and $ \psi_\xi^\nu - u_l^\nu = -(u_l^\nu-u_r^\nu)(1-g_\xi^\nu) $ that for any $ T>1, $ 
		\begin{equation*}
		\Big|\int_{-\infty}^{x} h_\xi^\nu~dy\Big| \leq C(T)\Big[ 1-g_\xi^\nu(x) + |(g_\xi^\nu)'(x)| + \int_{-\infty}^x |(g_\xi^\nu)'| dy +  \int_{-\infty}^x |(g_\xi^\nu)''| dy \Big].
		\end{equation*}
		Note that $ \int_{-\infty}^x |(g_\xi^\nu)'| dy = 1-g_\xi^\nu. $ And by $ (g^\nu)''= \frac{1}{\ovlul-\ovlur}(f'(\phi^\nu)- s)(\phi^\nu)', $ there exists a unique point $ x_0 \in \R $ such that $ (g^\nu)''<0 $ for $ x < x_0, $ and $ (g^\nu)''>0 $ for $ x > x_0. $ Then $ \int_{-\infty}^x |(g_\xi^\nu)''| dy = -(g^\nu_\xi)' $ if $ x<x_0+\xi. $ 
		Hence, it follows from \cref{Lem-g} that 
		\begin{equation}\label{source3}
		\Big|\int_{-\infty}^{x} h_\xi^\nu~dy\Big| \leq C(T) e^{\beta x }  \qquad \forall x\in \R, 1\leq t\leq T.
		\end{equation}
		By \cref{Lem-psi}, one can verify easily that
		\begin{equation}\label{initial}
		| \check{U}^\nu_{\xi(1)}(x,1) | \leq C e^{\beta x}  \qquad \quad \forall x\in \R.	
		\end{equation}
		Now we consider the problem:
		\begin{equation}\label{heat1}
		\begin{cases}
		\p_t \check{V} - \nu \p_x^2 \check{V} = \check{H}^\nu:= f(\psi_\xi^\nu)-f(u^\nu)-\int_{-\infty}^{x} h^\nu_\xi(y,t)dy,  &  x\in \R, 1<t\leq T, \\
		\check{V}(x,1) = \check{U}^\nu_{\xi(1)}(x,1), & x\in \R.
		\end{cases}
		\end{equation}
		By \cref{Lem-psi} and \eqref{source3}, $ \check{H}^\nu $ satisfies that
		\begin{equation}\label{inequ2}
		| \check{H}^\nu(x,t) | \leq C(T) e^{\beta x}  \qquad \forall x\in \R, 1<t\leq T. 
		\end{equation}
		Then similar to the proof in (1), since the initial data $ \check{U}^\nu_{\xi(1)}(x,1) $ is smooth and satisfies \eqref{initial}, and the source term $ \check{H}^\nu $ is smooth and satisfies \eqref{inequ2}, one can obtain that the function
		\begin{equation}\label{V-check}
		\check{V}(x,t) = \int_{0}^{t-1} K^\tau(\cdot) * \check{H}^\nu(\cdot,t-\tau) d\tau +  K^{t-1}(\cdot)*\check{U}^\nu_{\xi(1)}(\cdot,1)
		\end{equation}
		solves \eqref{heat1} and all the derivatives of $ \check{V} $ appearing in the equation exist and are continuous in $ \R \times [1,T]. $ It follows from \eqref{inequ2} and \eqref{V-check} that $ \check{V}(x,t) $ vanishes as $ x \rightarrow -\infty. $ Then \eqref{V-check} and the equation of $ \psi_\xi^\nu $ yield 
		\begin{align*}
		\p_t (u^\nu-\psi_\xi^\nu) - \nu \p_x^2 (u^\nu-\psi_\xi^\nu) & = \p_x[ f(\psi_\xi^\nu)- f(u^\nu)- \int_{-\infty}^{x} h_\xi^\nu(y,t)dy ] \\
		& = \p_x \check{H}^\nu.
		\end{align*}
		And then
		\begin{equation*}
		\begin{aligned}
		\p_x \check{V}(x,t) & = \int_{0}^{t-1}  \p_x K^\tau(\cdot) *  \check{H}^\nu(\cdot,t-\tau) d\tau +  K^{t-1}(\cdot)*(u^\nu-\psi_\xi^\nu)(\cdot,1) \\
		& = (u^\nu-\psi_\xi^\nu)(x,t)  \qquad\qquad \forall x\in\R, 1< t \leq T,
		\end{aligned}	
		\end{equation*}
		which implies that $ \check{V} = \check{U} _\xi^\nu. $
	\end{itemize}
\end{proof}

\vspace{0.3cm}

Now we choose a fixed small number $ \e_0>0 $ such that 
\begin{equation}\label{eps0-1}
f'(\ovlul-2\e_0) -s >0~ \text{ and } ~f'(\ovlur+2\e_0)-s<0.
\end{equation}
Then it can follow from \cref{Lem-periodic} that there exists $ T_1>T_0 $ large enough such that
\begin{equation}\label{eps0-2}
u_l^\nu(x,t) > \ovlul-\e_0~ \text{ and } ~u_r^\nu(x,t) < \ovlur +\e_0  \qquad \forall x\in \R, ~t>T_1,
\end{equation}
where $ T_0 $ is the number chosen in \cref{small}.

\vspace{0.2cm}

For later use, we define
\begin{align}
a(v,w)&:=\int_{0}^{1} f'(w+\rho(v-w))\ d\rho, \label{def-a} \\
b(v,w)&:=\int_{0}^{1} f''(w+\rho(v-w))\ d\rho. \label{def-b}
\end{align}

\vspace{0.2cm}

Now, we give the results of the time-independent estimates of $ u^\nu- u_l^\nu $ and $ u^\nu- u_r^\nu. $

\begin{Prop}\label{prop-unidecay}
	Under the assumptions of \cref{Thm-2-periodic}. There exists $ T_2 > T_1 $ such that for any $\e>0$, there exists $N(\e)>0, $ independent of time $ t, $ such that 
	\begin{align}
		&  |u^\nu(x,t) - u_l^\nu(x,t)| \leq \e, \qquad t>T_2,  x<st-N(\e), \label{unidecay-l} \\
		&  |u^\nu(x,t)- u_r^\nu(x,t)| \leq \e,  \qquad t>T_2,  x>st+N(\e). \label{unidecay-r}
	\end{align}
\end{Prop}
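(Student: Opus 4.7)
Since the estimates \eqref{unidecay-l} and \eqref{unidecay-r} are symmetric (the latter uses $U_r^\nu$ and the Lax-type condition $f'(\ovlur + 2\e_0) < s$), I focus on \eqref{unidecay-l}. The plan is to work with the anti-derivative $U_l^\nu$ defined in \eqref{def-Ul}: by Lemma \ref{Lem-anti}, after rewriting $f(u_l^\nu) - f(u^\nu) = -a(u^\nu, u_l^\nu)\p_x U_l^\nu$ via \eqref{def-a}, it satisfies the linear parabolic equation $\p_t U_l^\nu + a(u^\nu, u_l^\nu)\p_x U_l^\nu = \nu \p_x^2 U_l^\nu$ with no zero-order term, so the maximum principle applies cleanly. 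From \eqref{ic-2-per} and Lemma \ref{Lem-decay-ulr}, one has $|U_l^\nu(x, t)| \leq C(t) e^{\beta_0 x}/\beta_0$ on the left half-line.

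\medskip

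Fix $\sigma \in (s,\, f'(\ovlul - 2\e_0))$---nonempty by \eqref{eps0-1}---and $\beta \in (0, \beta_0)$ small enough that $\sigma + \nu \beta < f'(\ovlul - 2\e_0)$. The traveling-wave barrier $W(x, t) := C_\ast e^{\beta(x - \sigma t)}$ satisfies $\p_t W + a\p_x W - \nu \p_x^2 W = \beta W(a - \sigma - \nu \beta)$, which is non-negative precisely when $a(u^\nu, u_l^\nu) \geq \sigma + \nu \beta$. By \eqref{def-a}, \eqref{eps0-2} and the strict monotonicity of $f'$, this holds whenever $u^\nu(x, t) \geq \ovlul - 2\e_0$ and $t \geq T_1$. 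Granting such a lower bound on a space-time wedge $\Omega := \{(x, t): x \leq st - N_0,\, t \geq T_1\}$ with $N_0$ sufficiently large, the comparison principle applied to $\pm U_l^\nu - W$ on $\Omega$ (choosing $C_\ast$ so large that $W$ dominates $|U_l^\nu|$ on the parabolic boundary of $\Omega$ via Lemma \ref{Lem-decay-ulr}) yields $|U_l^\nu(x, t)| \leq C_\ast e^{\beta(x - \sigma t)}$ in $\Omega$; thus $|U_l^\nu(x, t)| \leq C_\ast e^{-\beta N}$ for $x \leq st - N$ and $t \geq T_2$ with some $T_2 > T_1$.

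\medskip

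To pass from this $L^\infty$ bound on $U_l^\nu$ to a pointwise bound on $v := u^\nu - u_l^\nu$, I exploit the one-sided Lipschitz estimate $\p_x v \leq E/t + Ce^{-\alpha t} \leq L$ for $t \geq T_2$, which follows from \eqref{entropy} applied to $u^\nu$ and Lemma \ref{Lem-periodic} applied to $u_l^\nu$. An elementary argument then shows that if $K := \sup_{x \leq st - N - 1}|v(x, t)|$ is attained at some $x_0$, the one-sided Lipschitz bound forces $v$ to keep constant sign on an interval of length $K/L$ on one side of $x_0$, so $K^2/(2L) \leq |U_l^\nu(x_0 + K/L) - U_l^\nu(x_0)| \leq 2 C_\ast e^{-\beta N}$; hence $|v| \leq C e^{-\beta N/2}$ in this region, and \eqref{unidecay-l} follows upon choosing $N(\e)$ large. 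The principal obstacle is securing the pointwise lower bound $u^\nu \geq \ovlul - 2\e_0$ on the wedge $\Omega$: since the periodic perturbation $w_{0l}$ oscillates about zero, the shifted viscous shock profile $\phi^\nu(x - st - Y)$ cannot serve as a direct lower barrier, so one must instead compare $u^\nu$ with shock profiles connecting suitably modified states (traveling at slightly different speeds) and combine this with the exponential convergence $u_l^\nu \to \ovlul$ from Lemma \ref{Lem-periodic}.
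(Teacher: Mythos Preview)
Your overall strategy---control the anti-derivative $U_l^\nu$ by a comparison argument, then upgrade to a pointwise bound via the one-sided Lipschitz estimate (your Step~4 is essentially identical to the paper's)---is correct in spirit, but the barrier argument you sketch has a genuine gap that you yourself identify without closing.

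The circularity is the real problem. For the traveling barrier $W=C_\ast e^{\beta(x-\sigma t)}$ to be a super-solution for the equation $\p_t U_l^\nu + a(u^\nu,u_l^\nu)\p_x U_l^\nu=\nu\p_x^2 U_l^\nu$, you need $a(u^\nu,u_l^\nu)\geq \sigma+\nu\beta$, which forces $u^\nu\geq \ovlul-2\e_0$ on the wedge. But this lower bound on $u^\nu$ is essentially half of what the proposition asserts; your closing remark about comparing with shock profiles of modified states does not resolve it, because any such profile used as a lower barrier for $u^\nu$ must lie below the oscillating initial data $\ovlul+w_{0l}$, hence below $\ovlul-\|w_{0l}\|_{L^\infty}$, which need not exceed $\ovlul-2\e_0$. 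There is also a lateral-boundary issue: on $\{x=st-N_0\}$ your barrier $W$ decays exponentially in $t$ (since $\sigma>s$), whereas $|U_l^\nu(st-N_0,t)|$ is not known a priori to decay---Lemma~\ref{Lem-decay-ulr} only gives a bound $C(t)e^{\beta_0 x}$ with $C(t)$ possibly growing---so $W$ cannot dominate $|U_l^\nu|$ on the full parabolic boundary of the wedge.

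The paper sidesteps both difficulties by a different device: rather than using a barrier for the \emph{linear} equation satisfied by $U_l^\nu$ (which requires sign information on the transport coefficient $a$), it constructs explicit super- and sub-solutions to the \emph{nonlinear} equation~\eqref{CL} itself. For the upper bound on $U_l^\nu$ it takes $\tilde u(x,t)=M_1 e^{\tilde\beta(x-st)}+u_l^\nu(x,t)$ and checks directly that $\p_t\tilde u+\p_x f(\tilde u)-\nu\p_x^2\tilde u>0$ for $t\ge T_2$, with $T_2$ independent of $M_1$; then $\tilde U:=\int_{-\infty}^x(u^\nu-\tilde u)\,dy$ satisfies a \emph{strict} differential inequality with an arbitrary bounded coefficient $a(u^\nu,\tilde u)$, so the maximum principle on the whole line (no lateral boundary) gives $\tilde U\le\e$ once $M_1$ is chosen large enough to handle the single initial time $T_2$. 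For the lower bound it uses the ansatz $\psi_\xi^\nu$ with a shift curve $\xi'(t)=s-M_2 e^{-\alpha t}$ chosen so that the source term $h_\xi^\nu<0$, making $\psi_\xi^\nu$ a sub-solution; again the anti-derivative satisfies a strict inequality and no information on $u^\nu$ is needed. The key point is that the strict sign comes from the \emph{source term}, not from the coefficient $a$, so nothing circular is required of $u^\nu$.
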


\begin{proof} 
	We prove \eqref{unidecay-l} only, since the proof of \eqref{unidecay-r} is similar. And the proof will be divided into four steps.
	
	\vspace{0.1cm}
	
	$ Step ~1. $ We will prove that there exist $ T_2 > T_1, $ independent of $ \e, $ and  $ N_1=N_1(\e)>0 $ such that
	\begin{equation*}
	U_l^\nu(x,t) < \e, \quad t>T_2,~ x<st-N_1,
	\end{equation*}
	where $ U_l^\nu $ is defined in \eqref{def-Ul}.
	\vspace{0.2cm}
	
	By \eqref{eps0-1}, one can define a constant
	\begin{equation}\label{beta-tilde}
	\tilde{\beta} :=  f'(\ovlul-2\e_0) -s  >0.
	\end{equation}
	For a constant $ M_1>1 $ to be determined later (see \cref{def-M}), we define the function:
	\begin{equation}\label{u-tilde}
	\tilde{u}(x,t) := M_1 e^{\tilde{\beta} (x-st)} + u_l^\nu(x,t) , \quad x\in\R,  t>0.
	\end{equation}
	Then by the equation of $ u_l^\nu, $ one has that
	\begin{equation}\label{equ-u-tilde}
	\p_t \tilde{u} - \nu \p_x^2 \tilde{u} + \p_x f(\tilde{u}) = \tilde{h} := -\tilde{\beta} s M_1 e^{\tilde{\beta} (x-st)} - \nu \tilde{\beta}^2 M_1 e^{\tilde{\beta} (x-st)} + \p_x \left( f(\tilde{u})- f(u_l^\nu)\right).
	\end{equation}
    	Since $ f $ is strictly convex, \eqref{eps0-2} implies that
    	$$ f'(\tilde{u}) \geq f'(u_l^\nu) \geq f'(\ovlul-\e_0) \quad \forall x\in\R, t\geq T_1. $$
    	Hence, for the given constant $M_1,$ if $ t\geq T_1,$ the following two inequalities hold.
    	\begin{enumerate}
    	\item[(1) ] 
    		If $x$ satisfies $M e^{\tilde{\beta} (x-st)}\geq 1$, then it holds that 
    		\begin{align*}
		\p_x \left( f(\tilde{u})-f(u_l^\nu) \right) & = \left( f'(\tilde{u})-f'(u_l^\nu) \right) \left( \tilde{\beta}M_1 e^{\tilde{\beta}(x-st)}+\p_x u_l^\nu \right) + f'(u_l^\nu) \tilde{\beta}M_1 e^{\tilde{\beta} (x-st)}  \\
    	    	& \geq \left( f'(\tilde{u})-f'(u_l^\nu) \right) \left( \tilde{\beta} -|\p_x u_l^\nu|\right) + f'(\ovlul-\e_0) \tilde{\beta}M_1 e^{\tilde{\beta} (x-st)}.
    	    	\end{align*}
    	    	Then by \cref{equ-u-tilde}, \cref{Lem-g} and $ \nu\leq 1, $ one has that
    	    	\begin{align*}
    	    	\tilde{h} \geq  &\left( f'(\tilde{u})-f'(u_l^\nu) \right) \left( \tilde{\beta} -|\p_x u_l^\nu|\right) + \tilde{\beta} M_1 e^{\tilde{\beta} (x-st)} \left( f'(\ovlul-\e_0)-(s+\tilde{\beta}) \right)  \\
    	    	\geq &\left( f'(\tilde{u})-f'(u_l^\nu) \right) \left( \tilde{\beta} -C e^{-\alpha t} \right) + \tilde{\beta}M_1 e^{\tilde{\beta} (x-st)} \left( f'(\ovlul-\e_0)-f'(\ovlul-2 \e_0)\right) \\
    	    	\geq & \left( f'(\tilde{u})-f'(u_l^\nu) \right) \left( \tilde{\beta} -C e^{-\alpha t} \right).
    	    	\end{align*} 
    	    	
    	    	\item[(2) ] If $x$ satisfies $M_1 e^{\tilde{\beta} (x-st)}<1$, one has $0< \tilde{u}-u_l^\nu \leq 1$. Then it holds that
    	    	\begin{align*}
    	    	\p_x \left( f(\tilde{u})-f(u_l^\nu) \right) & = f'(\tilde{u}) \tilde{\beta}M e^{\tilde{\beta} (x-st)} + \left( f'(\tilde{u})-f'(u_l^\nu) \right) \p_x u_l^\nu \\
    	    	& = f'(\tilde{u}) \tilde{\beta}M_1 e^{\tilde{\beta} (x-st)} + f''(\cdot) (\tilde{u} - u_l^\nu) \p_x u_l^\nu \\
    	    	&\geq f'(\ovlul-\e_0) \tilde{\beta}M_1 e^{\tilde{\beta} (x-st)} -C  M_1 e^{\tilde{\beta} (x-st)} |\p_x u_l^\nu|.
    	    	\end{align*}
    	     This yields that
    	    	\begin{align*}
    	    	\tilde{h} \geq & ~\tilde{\beta} M_1 e^{\tilde{\beta} (x-st)} \left( f'(\ovlul-\e_0)-(s+\tilde{\beta})\right) -  CM_1 e^{\tilde{\beta} (x-st)} |\p_x u_l^\nu| 	\\
    	    	\geq &~ M_1 e^{\tilde{\beta} (x-st)} \left[ \tilde{\beta} \left( f'(\ovlul-\e_0)-(s+\tilde{\beta}) \right) - C e^{-\alpha t} \right] \\
    	    	\geq &~ M_1 e^{\tilde{\beta} (x-st)} \left[ \tilde{\beta} \left( f'(\ovlul-\e_0)-f'(\ovlul-2 \e_0)\right) - C e^{-\alpha t} \right] \\
    	    	\geq & ~ M_1 e^{\tilde{\beta} (x-st)} \left( \tilde{\beta} c_0 \e_0 - C e^{-\alpha t} \right), 
    	    	\end{align*}
    	    	where $ c_0:=\min f'' >0. $ 
    	\end{enumerate}
    As a result of (1) and (2),
    there exists a constant $ T_2 > T_1, $ \textbf{independent of either $ M_1 $ or $ \e, $}  such that
	\begin{equation*}
	\tilde{h} = \p_t \tilde{u} - \nu \p_x^2 \tilde{u} + \p_x f(\tilde{u}) >0, \qquad x\in \R,  t\geq T_2.
	\end{equation*}
	Set $ \tilde{U}(x,t) := \int_{-\infty}^{x} (u^\nu-\tilde{u})(y,t) dy. $ Then as for the proof of Lemma \ref{Lem-anti}, one can prove that
	\begin{equation*}
	\p_t \tilde{U} - \nu \p_x^2 \tilde{U} + \left( f(u^\nu)-f(\tilde{u})\right) = -\int_{-\infty}^{x} \tilde{h}(y,t) dy, \qquad x\in \R,  t>T_2,
	\end{equation*}
	which implies that
	\begin{equation}\label{max}
	\p_t \tilde{U} - \nu \p_x^2 \tilde{U} + a(u^\nu,\tilde{u}) \p_x \tilde{U} < 0  \qquad \forall x\in \R,  t>T_2,
	\end{equation}
	where $ a(u,v) $ is defined by \eqref{def-a}.
	
	By \cref{Lem-psi}, there exists $ N_1>0 $ such that
	\begin{equation}\label{step2-1}
	|U_l^\nu(x,T_2)| = \Big| \int_{-\infty}^{x} (u^\nu-u_l^\nu)(y,T_2) dy \Big| \leq \e  \quad \forall x \leq -N_1.
	\end{equation}
	For this $ N_1, $ it can also follow from \cref{Lem-psi} that one can define a constant
	\begin{equation}\label{step2-2}
	B := \sup_{x \geq - N_1} \Big| \int_{-N_1}^{x} (u^\nu-u_r^\nu)(y,T_2) dy \Big| < +\infty.	
	\end{equation}
	Thus, for the given constants $ \tilde{\beta}, T_2, N_1 $ and $ B, $ one can choose $ M_1 >0 $ large enough such that
	\begin{equation}\label{def-M}
	\int_{-\infty}^{-N_1} (u_l^\nu - \tilde{u})(y,T_2) dy = - \frac{M_1}{\tilde{\beta}} e^{-\tilde{\beta} (N_1+sT_2)} < - B.
	\end{equation}
	Now we claim that
	\vspace{0.2cm}
	
	$ Claim~1. $ $$ \tilde{U}(x,T_2) = \int_{-\infty}^{x} (u^\nu-\tilde{u})(y,T_2) dy \leq \e  \quad \forall x\in\R. $$
	
	\vspace{0.2cm}

	In fact, if $ x\leq-N_1, $ then \eqref{step2-1} implies that
	\begin{equation*}
	\int_{-\infty}^{x} (u^\nu-\tilde{u})(y,T_2) dy \leq \int_{-\infty}^{x} (u^\nu-u_l^\nu)(y,T_2) dy \leq \e.
	\end{equation*}
	And if $ x>-N_1, $ it follows from \eqref{step2-1} and \eqref{step2-2} that
	\begin{align*}
	\int_{-\infty}^{x} (u^\nu-\tilde{u})(y,T_2) dy = & \int_{-\infty}^{-N_1} (u^\nu-u_l^\nu) + \int_{-\infty}^{-N_1} (u_l^\nu-\tilde{u}) \\
	& + \int_{-N_1}^{x} (u^\nu-u_r^\nu) + \int_{-N_1}^{x} (u_r^\nu-\tilde{u})  \\
	\leq &~ \e - B + B + 0  = \e,
	\end{align*}
	which proves $ Claim~1. $
	
	\vspace{0.2cm}
	
	Combing \eqref{max} with $ Claim~1 $, and using the maximum principle \cite[Lemma~1]{Oleinik1960},  one can obtain that
	\begin{equation*}
	\tilde{U}(x,t) \leq \e  \qquad \forall  x\in \R,  t\geq T_2,
	\end{equation*}
	which implies that
	\begin{equation*}
	U_l^\nu(x,t) = \int_{-\infty}^{x} (u^\nu-u_l^\nu)(y,t)dy = \tilde{U}(x,t) + \frac{M_1}{\tilde{\beta}} e^{\tilde{\beta} (x-st)} \leq 2\e,
	\end{equation*}
	if $ t\geq T_2 $ and $ x-st < -N_1 $ with $ N_1>0 $ large enough.
	
	\vspace{0.5cm}
	
	$ Step ~2. $ In this step, it is aimed to construct a ``sub-solution'' $ \psi_{\xi}^\nu $ to \cref{CL} such that the anti-derivative variable of $ u^\nu-\psi_{\xi}^\nu $ has the lower bound $ -\e $ (see \cref{lower-bound}). The idea is to find a $ C^\infty $ curve 
	$\xi(t): [T_2, +\infty) \rightarrow \R $
	such that the ansatz $ \psi_{\xi}^\nu $ defined by \eqref{def-psi} satisfies that
	\begin{equation}\label{ineq5}
	h_\xi^\nu= \p_t \psi_\xi^\nu + \p_x f(\psi_\xi^\nu) - \nu \p_x^2 \psi_\xi^\nu < 0  \qquad \forall x\in \R,  t>T_2,
	\end{equation}
	with the proper initial data \cref{step2-4}. 
	\vspace{0.1cm}
	
	For two given constants $ M_2>0 $ and $ d>0$, where $ M_2 $ will be determined in this step, and $ d $ will be determined in the next step, we define a $ C^\infty $ curves $\xi(t): [T_2, +\infty) \rightarrow \R, $ which solves the following problem:
	\begin{equation}\label{def-xi}
	\begin{cases}
	& \xi'(t) = s - M_2 e^{-\alpha t},  \qquad t>T_2, \\
	& \xi(T_2) =-d.
	\end{cases}
	\end{equation}
	Then by \eqref{def-xi}, there exists a constant $ D>d, $ depending on $\alpha, T_2,  d$ and $ M_2,$ such that
	\begin{equation}\label{xibdd}
	\xi(t) \in [st-D, st+D]  \quad \forall ~t \geq T_2.
	\end{equation} 
	Now we calculate the source term $ h_\xi^\nu. $	
	It follows from \eqref{source-2} and \eqref{small} that for $ t\geq T_2, $	
	\begin{align*}
	h_\xi^\nu= & ~(u_l^\nu-u_r^\nu) \left( \xi'(t)-s + f'(\phi_\xi^\nu) - f'(\psi_\xi^\nu) + 2\nu \dfrac{\p_x u_l^\nu-\p_x u_r^\nu}{u_l^\nu-u_r^\nu} \right)\lvert (g_\xi^\nu)' \rvert \\
	& -(u_l^\nu-u_r^\nu) \left( b(\psi_\xi^\nu, u_l^\nu)\p_x u_l^\nu - b(\psi_\xi^\nu, u_r^\nu)\p_x u_r^\nu \right) g_\xi^\nu (1-g_{\xi}^\nu),
	\end{align*}
	where $ b(u,v) $ is defined in \eqref{def-b}.
	Then by \cref{Lem-periodic} and \eqref{g1}, one has that for $ t\geq T_2, $
	\begin{align*}
	h_\xi^\nu
	\leq & ~(u_l^\nu-u_r^\nu) \left( \xi'(t)-s + f'(\phi_{\xi}^\nu) - f'(\psi_\xi^\nu) + 2\nu \dfrac{\p_x u_l^\nu-\p_x u_r^\nu}{u_l^\nu-u_r^\nu} \right) \lvert (g_\xi^\nu)'\rvert \\
	& + C (u_l^\nu-u_r^\nu) \left(\lvert \p_x u_r^\nu \rvert + \lvert \p_x u_l^\nu \rvert\right) \lvert (g_\xi^\nu)'\rvert \\
	\leq & ~(u_l^\nu-u_r^\nu)~\lvert (g_\xi^\nu)'\rvert~ \Big[-M_2 e^{-\alpha t} + C \lvert \phi_{\xi}^\nu - \psi_\xi^\nu \rvert +C (\lvert \p_x u_r^\nu \rvert + \lvert \p_x u_l^\nu \rvert)\Big].
	\end{align*}		
	Note that $$ | \phi_{\xi}^\nu - \psi_{\xi}^\nu | \leq | \ovlul - u_l^\nu | g_{\xi}^\nu + | \ovlur - u_r^\nu |( 1- g_{\xi}^\nu) \leq C e^{-\alpha t}  \qquad \forall x\in\R. $$ 
	Then if $M>0 $ is large enough, it holds that
	\begin{align*}
	h_\xi^\nu \leq (u_l^\nu-u_r^\nu)~ \lvert (g_\xi^\nu)'\rvert~  (-M_2+C) e^{-\alpha t} < 0  \qquad \forall x\in \R,  t>T_2.
	\end{align*}		
	Therefore, for $ M_2>0 $ large enough, \eqref{ineq5} is fulfilled with the $ \xi $ constructed in \eqref{def-xi}.
	
	\vspace{0.5cm}
	
	$ Step ~3. $ In this step, we will prove that there exists $ N_2=N_2(\e)>0 $ such that
	\begin{equation}\label{uni-Ul}
	U_l^\nu(x,t) > -\e,  \quad t>T_2,~x<st-N_2.
	\end{equation}
	\vspace{0.1cm}
	
	For the constants $ N_1 $ and $ B $ defined in \eqref{step2-1} and \eqref{step2-2}, one can choose the constant $ d=\xi(T_2)>0 $ (which is in \eqref{def-xi}) large enough such that 
	\begin{equation}\label{step2-3}
	\begin{aligned}
	& \int_{-\infty}^{-N_1} (u_l^\nu-\psi_{-d}^\nu)(x,T_2) dx + \int_{-N_1}^{+\infty} (u_r^\nu -\psi_{-d}^\nu)(x,T_2) dx \\
	= & ~\int_{-\infty}^{-N_1} (u_l^\nu-u_r^\nu)(x,T_2)(1-g^\nu(x+d)) dx - \int_{-N_1}^{+\infty} (u_l^\nu-u_r^\nu)(x,T_2)~g^\nu(x+d) dx \\
	> & ~ B.	
	\end{aligned}
	\end{equation}
	Then we claim that
	\vspace{0.1cm}
	
	$ Claim ~2. $ 
	\begin{equation}\label{step2-4}
	\check{U}^\nu_{\xi(T_2)}(x,T_2) = \int_{-\infty}^{x} (u^\nu-\psi^\nu_{-d})(y,T_2) dy \geq -\e  \quad \forall x\in \R.
	\end{equation}
	where $ \check{U}_\xi^\nu $ is defined in \eqref{def-U-check}.
	
	\vspace{0.2cm}
	
	In fact, if $ x \leq -N_1, $ then it follows from \eqref{step2-1} that
	$$ \int_{-\infty}^{x} (u^\nu - \psi^\nu_{-d})(y,T_2) dy \geq \int_{-\infty}^{x} (u^\nu -u_l^\nu)(y,T_2) dy \geq -\e. $$
	And if $ x > -N_1, $ then by \eqref{step2-1}, \eqref{step2-2} and \eqref{step2-3}, one can get that
	\begin{align*}
	\int_{-\infty}^{x} (u^\nu-\psi^\nu_{-d}) & = \int_{-\infty}^{-N_1} (u^\nu-u_l^\nu)+ \int_{-\infty}^{-N_1}(u_l^\nu-\psi^\nu_{-d}) + \int_{-N_1}^x (u^\nu-u_r^\nu) + \int_{-N_1}^x (u_r^\nu-\psi^\nu_{-d}) \\
	& \geq -\e + \int_{-\infty}^{-N_1}(u_l^\nu-\psi^\nu_{-d}) - B + \int_{-N_1}^{+\infty} (u_r^\nu -\psi^\nu_{-d}) \\
	& \geq -\e.
	\end{align*}
	Thus $ Claim ~2 $ is proved.
	
	\vspace{0.1cm}

	It can follow from \cref{Lem-anti} that
	\begin{equation*}
	\p_t \check{U}_\xi^\nu - \nu \p_x^2 \check{U}_\xi^\nu = f(\psi_\xi^\nu) - f(u^\nu) -\int_{-\infty}^{x} h_\xi^\nu(y,t) dy  \quad \forall x\in\R, ~t\geq T_2.
	\end{equation*}
	This and \eqref{ineq5} yield that
	\begin{equation}\label{max1}
	\p_t \check{U}_\xi^\nu - \nu \p_x^2 \check{U}_\xi^\nu + a(u^\nu,\psi_\xi^\nu) \p_x \check{U}_\xi^\nu >0  \quad \forall x\in\R, ~t\geq T_2,
	\end{equation}
	where $ a $ is defined by \eqref{def-a}. Then by \eqref{step2-4}, \eqref{max1}, and using the maximum principle, one can obtain
	\begin{equation}\label{lower-bound}
	\check{U}_\xi^\nu(x,t) \geq -\e  \quad \forall x\in\R, ~t\geq T_2.
	\end{equation}
	This, together with \cref{small} and \eqref{xibdd}, shows that
	\begin{align*}
	U_l^\nu(x,t) & =\int_{-\infty}^{x} (u^\nu - u_l^\nu)(y,t)dy \\
	& = \int_{-\infty}^{x} (u^\nu -\psi_\xi^\nu)(y,t)dy + \int_{-\infty}^{x} (\psi_\xi^\nu -u_l^\nu)(y,t)dy \\
	& = \check{U}_\xi^\nu(x,t) - \int_{-\infty}^{x} (u_l^\nu-u_r^\nu)(y,t)[1-g^\nu(y-\xi(t))] dy \\
	& \geq -\e - 2(\ovlul-\ovlur) \int_{-\infty}^{x} [1-g^\nu(y-st+D)] dy \\
	& \geq -2\e,
	\end{align*}
	provided that $ t\geq T_2 $ and $ x-st<-N_2 $ with $ N_2>0 $ large enough.
	
	\vspace{0.5cm}
	
	$ Step ~4. $ In the last step, we complete the proof of \eqref{unidecay-l}.
	\vspace{0.2cm}
	
	By $ Steps ~1 $ and $ 3, $ for any $ t > T_2 $ and $ x_1<x_2<st-N_2, $ one has that
	\begin{equation}\label{integral}
	\Big| \int_{x_1}^{x_2} (u^\nu-u_l^\nu)(y,t) dy \Big| \leq 2\e. 
	\end{equation}
	And by \eqref{entropy} and \cref{Lem-g}, there exists a positive number $ M_3>0 $ such that
	\begin{equation}\label{M1}
	\p_x (u^\nu -u_l^\nu) \leq M_3  \quad \forall ~t>T_2, ~ x\in \R.
	\end{equation}
	Then \eqref{unidecay-l} follows from the following Claim.
	\vspace{0.1cm}
	
	$ Claim ~3. $ For any $ t>T_2,~ x < st-N_2- \frac{3\sqrt{\e}}{\sqrt{M_3}}, $ it holds that
	\begin{equation}\label{uni-l}
	|u^\nu(x,t) - u_l^\nu(x,t)| \leq 3 \sqrt{M_3 \e}.
	\end{equation}	
	
	\vspace{0.3cm}
	
	In fact, if there exist $ t_0>T_2$ and $ x_0 < st_0 -N_2-\frac{3\sqrt{\e}}{\sqrt{M_3}} $ such that
	$$ u^\nu(x_0,t_0) - u_l^\nu(x_0,t_0) < -3 \sqrt{M_3 \e}, $$
	then for any $ x \in (x_0, x_0+\frac{3\sqrt{\e}}{\sqrt{M_3}} ), $ it holds that
	$$ (u^\nu-u_l^\nu)(x,t_0) - (u^\nu-u_l^\nu)(x_0,t_0) = \p_x (u^\nu-u_l^\nu)(\cdot,t_0)(x-x_0). $$
	Due to \eqref{M1}, it follows from above that
	\begin{equation*}
	(u^\nu-u_l^\nu)(x,t_0) < - 3 \sqrt{M_3 \e} + M_3(x-x_0).
	\end{equation*}
	Then integrating this inequality over $ (x_0, x_0+\frac{3\sqrt{\e}}{\sqrt{M_3}}) $ yields that
	\begin{align*}
	\int_{x_0}^{x_0+\frac{3\sqrt{\e}}{\sqrt{M_3}}} (u^\nu-u_l^\nu)(x,t_0) dx  \leq - 3 \sqrt{M_3 \e} \times \frac{3\sqrt{\e}}{\sqrt{M_3}} + \frac{M_3}{2} \times \frac{9\e}{M_3} 
	= -\frac{9}{2}\e < -2\e,
	\end{align*}
	which contradicts \eqref{integral}.
	For the other case that $ u^\nu(x_0,t_0)-u_l^\nu(x_0,t_0)> 3 \sqrt{M_3 \e}, $ it is also a contradiction by considering the interval $(x_0-\frac{3\sqrt{\e}}{\sqrt{M_3}}, x_0)$ instead. So $ Claim ~3 $ holds true.
\end{proof}

\vspace{0.5cm}

\subsubsection{Anti-derivative variables}~\\

In this part, 
we consider the equation of the anti-derivative of $u^\nu-\psi_{X^\nu}^\nu$. It turns out that the error term in the equation for the anti-derivative variable decays exponentially both in space and in time; see \cref{prop-U}. 
And then the idea of Il'in and Ole\v{\i}nik \cite{Oleinik1960} is applied to proving \eqref{ineq-ansatz}, i.e. we
construct an auxiliary function ($ \Theta(x) $ constructed below), and then use the maximal principle.

\vspace{0.2cm}

We first define the anti-derivative variable of $ -h^\nu_{X^\nu} $ in \cref{psi-equ} as
\begin{align}
H^\nu(x,t):= - \int_{-\infty}^{x} h_{X^\nu(t)}^\nu(y,t)~dy,  \quad  t\geq T_0, x \in \R,. \label{def-H}
\end{align} 
Then due to \eqref{source-1}, one has that 
\begin{equation}\label{integral2}
\begin{aligned}
H^\nu = &  -\left(f(\psi_{X^\nu}^\nu)-f(u_l^\nu)\right) g_{X^\nu}^\nu - \left(f(\psi_{X^\nu}^\nu)-f(u_r^\nu)\right)(1-g_{X^\nu}^\nu) \\
& + 2\nu (u_l^\nu-u_r^\nu) (g_{X^\nu}^\nu)' - \int_{-\infty}^{x}(f(u_l^\nu)-f(u_r^\nu))(y,t) (g_{X^\nu}^\nu)'(y) dy \\
& + (X^\nu)' \int_{-\infty}^{x} (u_l^\nu-u_r^\nu)(y,t) (g_{X^\nu}^\nu)'(y) dy \\
& - \nu \int_{-\infty}^{x} (u_l^\nu-u_r^\nu)(y,t) (g_{X^\nu}^\nu)''(y) dy.
\end{aligned}
\end{equation}

On the other hand, by \cref{Lem-psi}, one can define the anti-derivative variable of $ u^\nu-\psi_{X^\nu}^\nu $ as:
\begin{equation}\label{def-U}
U^\nu(x,t) := \check{U}_{X^\nu}^\nu(x,t) = \int_{-\infty}^{x} (u^\nu-\psi_{X^\nu}^\nu)(y,t) dy,  \quad  t\geq 0,  x \in \R.
\end{equation}

For convenience, in the following part of this paper we define
\begin{equation}\label{def-a-ovl}
\ovl{a}(x,t) := a(u^\nu,\psi_{X^\nu}^\nu)(x,t).
\end{equation}

\begin{Prop}\label{prop-U}
	The functions $ H^\nu(x,t) $ and $ \ovl{a}(x,t) $ are smooth in $ \R\times [T_0,+\infty), $ and $ U^\nu(x,t) $ solves the equation
	\begin{equation}\label{equ-U2}
		\partial_t U^\nu - \nu \partial_x^2 U^\nu + \ovl{a} ~\partial_x U^\nu = H^\nu, \quad x \in \R,~t \geq T_0,
	\end{equation}
	where $ a(u,v) $ is defined by \eqref{def-a}, and all the derivatives of $ U^\nu $ appearing in \eqref{equ-U2} are continuous in $ \R\times [T_0,+\infty). $
	Moreover, $ H^\nu(x,t) $ satisfies
	\begin{equation}\label{ineq-H}
	\lvert H^\nu(x,t)\rvert \leq C_1 e^{-\alpha t}e^{-\beta \lvert x-X^\nu(t)\rvert}   \quad \forall x \in \R, ~t \geq T_0,
	\end{equation}
	and $ U^\nu(x,t) $ satisfies
	\begin{equation}\label{U-estimate}
		\lvert U^\nu(x,t)\rvert \leq C_2(t) e^{-\beta \lvert x-X^\nu(t)\rvert}  \quad \forall x \in \R, ~t \geq T_0,
	\end{equation}
	where $ C_1>0 $ is a constant, independent of time $ t, $ and $ C_2(t)>0 $ is bounded on any compact subset of $ [T_0,+\infty). $
\end{Prop}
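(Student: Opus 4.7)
\medskip

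\noindent\textbf{Proof proposal for Proposition \ref{prop-U}.}

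My plan is to first dispose of the routine parts (smoothness and the derivation of the PDE \eqref{equ-U2}), then concentrate on the key estimate \eqref{ineq-H} for $H^\nu$, and finally deduce \eqref{U-estimate} by integrating a pointwise bound.

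\emph{Smoothness and the PDE.} The smoothness of $\ovl{a}$ is immediate from the smoothness of $u^\nu$ and $\psi_{X^\nu}^\nu$ on $\R\times[T_0,+\infty)$ and the smoothness of $a(\cdot,\cdot)$. For $H^\nu$ I would use the integrated formula \eqref{integral2}: every term is either a product of smooth bounded factors or an indefinite integral of a smooth, exponentially decaying ($ |y-X^\nu|\to\infty$) integrand, hence smooth. The PDE \eqref{equ-U2} will come from \cref{Lem-anti}\eqref{U-check} with $\xi=X^\nu(t)$: rewriting $f(\psi^\nu_{X^\nu})-f(u^\nu) = -a(u^\nu,\psi^\nu_{X^\nu})(u^\nu-\psi^\nu_{X^\nu}) = -\ovl{a}\,\partial_x U^\nu$ and $-\int_{-\infty}^x h^\nu_{X^\nu}dy = H^\nu$ yields the claim; the regularity of the derivatives of $U^\nu$ is inherited from \cref{Lem-anti} (which applies since $X^\nu\in C^\infty$ has bounded derivatives by \eqref{X-prime}).

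\emph{The estimate on $H^\nu$, which is the main difficulty.} The strategy is to exploit two structural facts simultaneously. First, by construction of $X^\nu$ (\eqref{ode-shift}), $\int_\R h^\nu_{X^\nu}dx=0$, so $H^\nu(\pm\infty,t)=0$; I would verify the vanishing at $+\infty$ directly by substituting the formula \eqref{def-F-nu} for $(X^\nu)'$ into the three integral terms of \eqref{integral2}. Second, if one replaces $u_l^\nu,u_r^\nu$ by their averages $\ovlul,\ovlur$ and $(X^\nu)'$ by $s$, then the expression \eqref{integral2} reduces to $-f(\phi^\nu_{X^\nu})+f(\ovlul)g^\nu_{X^\nu}+f(\ovlur)(1-g^\nu_{X^\nu})+\nu(\ovlul-\ovlur)(g^\nu_{X^\nu})'$; integrating the shock-profile ODE in \eqref{ode1} once gives $\nu(\ovlul-\ovlur)(g^\nu_{X^\nu})'=f(\phi^\nu_{X^\nu})-f(\ovlul)-s(\phi^\nu_{X^\nu}-\ovlul)$, and together with the Rankine-Hugoniot relation this shows the limit expression is identically zero. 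Writing $w^\nu_l=u_l^\nu-\ovlul$, $w^\nu_r=u_r^\nu-\ovlur$ and $(X^\nu)'-s$, one then sees that $H^\nu$ is a linear combination of terms each of the form (exponentially time-decaying factor, by \cref{Lem-periodic,Lem-shift}) $\times$ (either $g^\nu_{X^\nu}(1-g^\nu_{X^\nu})$, $(g^\nu_{X^\nu})'$, $(g^\nu_{X^\nu})''$, or antiderivatives of these). For the antiderivative terms I would use the identities $\int_{-\infty}^x (g^\nu_{X^\nu})'dy = g^\nu_{X^\nu}(x)$ (decays for $x>X^\nu$) together with $\int_{-\infty}^{+\infty}(g^\nu_{X^\nu})'dy=-1$ to rewrite $\int_{-\infty}^x (\cdots)(g^\nu_{X^\nu})'dy = -\int_x^{+\infty}(\cdots)(g^\nu_{X^\nu})'dy$ whenever $x>X^\nu$, which restores exponential decay in $|x-X^\nu|$. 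Combining these yields \eqref{ineq-H} with a time-independent $C_1$.

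\emph{The estimate on $U^\nu$.} \cref{Lem-psi} with $\xi=X^\nu(t)$ gives the pointwise bound $|u^\nu-\psi^\nu_{X^\nu}|(y,t)\leq C(t)e^{\beta|X^\nu(t)|}e^{-\beta|y-X^\nu(t)|}$. For $x\leq X^\nu(t)$, integrating from $-\infty$ to $x$ yields \eqref{U-estimate} directly. For $x>X^\nu(t)$, I would use the identity $U^\nu(x,t)=-\int_x^{+\infty}(u^\nu-\psi^\nu_{X^\nu})dy$, which is valid because $\int_\R(u^\nu-\psi^\nu_{X^\nu})(y,t)dy\equiv 0$ for $t\geq T_0$; this last conservation follows from \eqref{def-X-0} at $t=T_0$ together with the fact that $\tfrac{d}{dt}\int_\R(u^\nu-\psi^\nu_{X^\nu})dx=-\int_\R h^\nu_{X^\nu}dx=0$ thanks to the definition of $(X^\nu)'$. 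Setting $C_2(t)=\tfrac{1}{\beta}C(t)e^{\beta|X^\nu(t)|}$ gives the bound with $C_2$ locally bounded in time, completing the proof.
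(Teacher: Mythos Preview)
Your proposal is correct and follows essentially the same route as the paper: both arguments hinge on (i) the cancellation that occurs when $u_l^\nu,u_r^\nu,(X^\nu)'$ are replaced by $\ovlul,\ovlur,s$ (via the integrated shock-profile ODE), and (ii) the vanishing $H^\nu(+\infty,t)=0$ coming from the choice of $X^\nu$, which allows the antiderivative to be recast as $-\int_x^{+\infty}$ for $x>X^\nu$. Two execution points the paper makes explicit and that you should tighten in your write-up: the identity $\int_{-\infty}^x(\cdots)(g^\nu_{X^\nu})'dy=-\int_x^{+\infty}(\cdots)(g^\nu_{X^\nu})'dy$ is \emph{not} valid term-by-term but only after summing all three integral terms in \eqref{integral2} (the paper packages this as the alternative representation \eqref{integral3}); and the $(g^\nu_{X^\nu})''$ integral needs the observation that $(g^\nu)''$ changes sign exactly once, so that $\int_{-\infty}^x|(g^\nu_{X^\nu})''|\,dy=-(g^\nu_{X^\nu})'(x)$ on the left half-line.
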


\begin{proof}	
	The smoothness of $ H^\nu(x,t) $ and $ \ovl{a}(x,t) $ can be derived easily from \cref{Lem-g} and the smoothness of $ u^\nu, u_l^\nu, u_r^\nu $ and $ \psi_{X^\nu}^\nu. $ By Lemma \ref{Lem-anti}, $ U^\nu = \check{U}_{X^\nu}^\nu $ solves \eqref{equ-U2} and all the derivatives of $ U^\nu $ appearing in \eqref{equ-U2} are continuous in $ \R\times [T_0, +\infty). $
	
	Now we prove \eqref{ineq-H}. Note that there exists a unique $ x_0 \in \R $ such that, if $ x<x_0, $ $ (g^\nu)''(x)<0, $ and if $ x>x_0, $ $ (g^\nu)''(x)>0. $ Then for $ x<X^\nu(t)+x_0, $ it follows from \eqref{integral2} and \cref{Lem-g} that
	\begin{align*}
	H^\nu &(x,t) =  -\left(f(\psi_{X^\nu}^\nu)-f(u_l^\nu)\right)- \left(f(\ovlul)-f(\ovlur)+ O(e^{-\alpha t}) \right) (1-g_{X^\nu}^\nu) \notag \\
	& + 2\nu \left(\ovlul- \ovlur +O(e^{-\alpha t})\right) (g_{X^\nu}^\nu)' - \int_{-\infty}^{x} \left(f(\ovlul)-f(\ovlur) +O(e^{-\alpha t}) \right) (g_{X^\nu}^\nu)'(y) dy \\
	& + \left(s+O(e^{-\alpha t}) \right) \int_{-\infty}^{x} \left( \ovlul-\ovlur +O(e^{-\alpha t})\right) (g_{X^\nu}^\nu)'(y) dy \\
	& - \nu \int_{-\infty}^{x} \left( \ovlul-\ovlur +O(e^{-\alpha t})\right)  ~(g_{X^\nu}^\nu)''(y) dy.
	\end{align*}
	If $ x<X^\nu(t)+x_0, ~ \int_{-\infty}^{x} |(g^\nu_{X^\nu})''| = -\int_{-\infty}^{x} (g^\nu_{X^\nu})'' = -(g^\nu_{X^\nu})', $ and $ 1-g^\nu_{X^\nu}(x)+|(g_{X^\nu}^\nu)'(x)| =  O\left(e^{\beta (x-X^\nu(t))}\right), $ then  
	\begin{align*}
	H^\nu(x,t) =  & -\left(f(\psi_{X^\nu}^\nu)-f(u_l^\nu)\right) - \left(f(\ovlul)-f(\ovlur) \right) (1-g_{X^\nu}^\nu)+ 2\nu (\ovlul- \ovlur)(g_{X^\nu}^\nu)'   \\
	& + \left(f(\ovlul)-f(\ovlur)\right) (1-g_{X^\nu}^\nu) - s (\ovlul-\ovlur)(1-g_{X^\nu}^\nu) \\
	& - \nu (\ovlul-\ovlur) (g_{X^\nu}^\nu)' + O\left( e^{-\alpha t} e^{\beta (x-X^\nu(t))}\right)  \\
	= & -(f(\psi_{X^\nu}^\nu)-f(u_l^\nu)) + (f(\phi_{X^\nu}^\nu)- f(\ovlul)) \\
	= & -a(\psi_{X^\nu}^\nu, u_l^\nu) (\psi_{X^\nu}^\nu - u_l^\nu) + a(\phi_{X^\nu}^\nu,\ovlul) (\phi_{X^\nu}^\nu-\ovlul) \\
	= & ~a(\psi_{X^\nu}^\nu, u_l^\nu) (u_l^\nu-u_r^\nu)(1-g_{X^\nu}^\nu) - a(\phi_{X^\nu}^\nu,\ovlul) (\ovlul-\ovlur)(1-g_{X^\nu}^\nu) \\
	= & (\ovlul-\ovlur)(1-g_{X^\nu}^\nu) \left(a(\psi_{X^\nu}^\nu,u_l^\nu)- a(\phi_{X^\nu}^\nu, \ovlul)\right) + O\left( e^{-\alpha t} e^{\beta (x-X^\nu(t))}\right).
	\end{align*}	
	And it is easy to verify that
	$$
	a(\psi_{X^\nu}^\nu,\ovlul)- a(\phi_{X^\nu}^\nu, u_l^\nu) = O(e^{-\alpha t}),
	$$
	which implies that if $ x<X^\nu(t)+x_0, $ $ H^\nu(x,t) = O\left(e^{-\alpha t} e^{\beta (x-X^\nu(t))}\right). $ On the other hand, if $ x > X^\nu(t)+x_0, $ $ \int_{x}^{+\infty} |(g^\nu_{X^\nu})''| = \int_{x}^{+\infty} (g^\nu_{X^\nu})'' = -(g^\nu_{X^\nu})', $ and $ g^\nu_{X^\nu}(x)+|(g_{X^\nu}^\nu)'(x)| =  O\left( e^{-\beta (x-X^\nu(t))}\right). $ It follows from \eqref{ode-shift} that for any $ t>T_0$ and $x\in\R, $
	\begin{equation}\label{integral3}
	\begin{aligned}
	H^\nu(x,t) = & -(f(\psi_{X^\nu}^\nu)-f(u_l^\nu))~g_{X^\nu}^\nu - \left( f(\psi_{X^\nu}^\nu)-f(u_r^\nu)\right) (1-g_{X^\nu}^\nu) \\
	& + 2\nu (u_l^\nu-u_r^\nu)~(g_{X^\nu}^\nu)' + \int_{x}^{+\infty}(f(u_l^\nu)-f(u_r^\nu))(y,t) (g_{X^\nu}^\nu)'(y) dy \\
	& - (X^\nu)'(t) \int_{x}^{+\infty} (u_l^\nu-u_r^\nu)(y,t) ~(g_{X^\nu}^\nu)'(y) dy \\
	& + \nu \int_{x}^{+\infty} (u_l^\nu-u_r^\nu)(y,t) ~(g_{X^\nu}^\nu)''(y) dy.
	\end{aligned}
	\end{equation}

	Then by similar arguments as above, one can prove that if $ x<X^\nu(t)+x_0, $ $ H^\nu(x,t) = O\left( e^{-\alpha t} e^{-\beta (x-X^\nu(t))}\right). $ Hence, one can get \eqref{ineq-H}.
		
	Now we will prove \eqref{U-estimate}. Similar to the proof of \cref{def-X-hat-0}, one can apply \cref{ode-shift} and \cref{def-X-0} to obtain that
	$$ \int_{-\infty}^{+\infty} \left( u^\nu(x,t)-\psi^\nu_{X^\nu(t)}(x,t)\right) dx = 0  \quad \forall t\geq T_0.  $$
%
%
	Hence, for any $ x \in \R $ and $ t> 0, $ one has that
	\begin{equation}\label{equ-U}
		U^\nu(x,t) = \int_{-\infty}^{x} (u^\nu-\psi^\nu_{X^\nu})(y,t) dy = - \int_{x}^{+\infty} (u^\nu-\psi^\nu_{X^\nu})(y,t) dy.	
	\end{equation}
	Lemma \ref{Lem-shift} implies that $ |X^\nu(t)| \leq C+|s|t. $ Then by \cref{Lem-psi}, one has that
	\begin{align*}
		|u^\nu(x,t)-\psi^\nu_{X^\nu}(x,t)| \leq C(t) e^{\beta|X^\nu(t)|} e^{-\beta |x-X^\nu(t)|}
		\leq C_2(t) e^{-\beta |x-X^\nu(t)|},
	\end{align*}
	which, together with \eqref{equ-U}, yields \eqref{U-estimate}.
\end{proof}

\vspace{0.2cm}


Denote the positive constant $\e_1 :=\min \left\lbrace \frac{f'(\ovlul)-s}{2}, -\frac{f'(\ovlur)-s}{2} \right\rbrace >0. $

\begin{Lem}\label{est-b}
		There exist positive constants $ T_3>T_2 $ and $N_0,$ independent of time, such that
		\begin{align}	
		& \ovl{a}(x,t) - (X^\nu)'(t)>\e_1 >0, \qquad  x-X^\nu(t)<-N_0,~t>T_3, \label{positive}\\
		& \ovl{a}(x,t) -(X^\nu)'(t)<-\e_1 <0,  \quad x-X^\nu(t)>N_0,~t>T_3. \label{negative}
		\end{align}
\end{Lem}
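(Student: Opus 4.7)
The plan is to show that in the far-left region $x - X^\nu(t) < -N_0$ (with $N_0$ and $t$ large), both the solution $u^\nu$ and the ansatz $\psi_{X^\nu}^\nu$ are close to $\ovlul$, so that $\ovl{a}(x,t) = \int_0^1 f'(\psi_{X^\nu}^\nu + \rho(u^\nu-\psi_{X^\nu}^\nu))\,d\rho$ is close to $f'(\ovlul)$. Combined with $(X^\nu)'(t)\to s$ (from \cref{Lem-shift}), the Lax condition $f'(\ovlul)>s$ and the definition of $\e_1$ give \eqref{positive}. The argument on the right is symmetric, using $u^\nu \approx u_r^\nu \approx \ovlur$ and $f'(\ovlur) < s$.

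For the left inequality, first I would bound $\psi_{X^\nu}^\nu - \ovlul$. Writing
\[
\psi_{X^\nu}^\nu(x,t) - \ovlul = (u_l^\nu-\ovlul)\,g_{X^\nu}^\nu + (u_r^\nu - u_l^\nu)(1-g_{X^\nu}^\nu),
\]
\cref{Lem-periodic} gives $\|u_l^\nu-\ovlul\|_{L^\infty}+\|u_r^\nu-\ovlur\|_{L^\infty}\leq Ce^{-\alpha t}$, and \cref{Lem-g} gives $1-g_{X^\nu}^\nu(x)\leq C e^{\beta(x-X^\nu(t))}$, so this difference is at most $Ce^{-\alpha t} + C e^{-\beta N_0}$ when $x-X^\nu(t)<-N_0$.

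Next, to control $u^\nu - u_l^\nu$ in the same region, I would invoke \cref{prop-unidecay}. By \cref{Prop-shift}, $|X^\nu(t)-st|\leq |X_\infty^\nu| + Ce^{-\alpha t} \leq C^*$ uniformly for $t\geq T_0$, so $x-X^\nu(t)<-N_0$ implies $x-st<-N_0+C^*$. Given any $\eta>0$, \cref{prop-unidecay} provides $N(\eta)$ with $|u^\nu-u_l^\nu|\leq \eta$ for $t>T_2$, $x-st<-N(\eta)$; taking $N_0\geq N(\eta)+C^*$ transfers this bound to our region.

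Combining, for arbitrary $\eta>0$ we can choose $N_0$ large and $T_3>T_2$ large so that $|u^\nu-\ovlul|+|\psi_{X^\nu}^\nu-\ovlul|\leq 2\eta$ in the region $\{x-X^\nu(t)<-N_0,\ t>T_3\}$; by uniform continuity of $f'$ on bounded sets this yields $|\ovl{a}(x,t)-f'(\ovlul)|\leq \e_1/2$. Using \eqref{X-prime} and enlarging $T_3$ further, $|(X^\nu)'(t)-s|\leq \e_1/2$. Since by definition $f'(\ovlul)-s\geq 2\e_1$, we conclude
\[
\ovl{a}(x,t)-(X^\nu)'(t) \geq (f'(\ovlul)-s) - |\ovl{a}-f'(\ovlul)| - |(X^\nu)'-s| \geq 2\e_1 - \e_1 = \e_1,
\]
proving \eqref{positive}. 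For \eqref{negative} the same argument applies on the right, with $\ovlur,\ u_r^\nu,\ g_{X^\nu}^\nu\to 0$, and $f'(\ovlur)-s\leq -2\e_1$. The only real subtlety is ensuring that \cref{prop-unidecay}, which is stated in terms of $x-st$, can be applied in terms of $x-X^\nu(t)$; this is handled by the uniform bound on $X^\nu(t)-st$ from \cref{Prop-shift}.
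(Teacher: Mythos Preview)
Your argument is correct and follows essentially the same route as the paper: bound the argument of $f'$ inside $\ovl{a}$ by showing both $u^\nu$ and $\psi_{X^\nu}^\nu$ are close to $\ovlul$ (via \cref{Lem-periodic}, \cref{Lem-g}, and \cref{prop-unidecay}, after translating from $x-st$ to $x-X^\nu(t)$ using the uniform bound on $X^\nu(t)-st$), and then combine with $(X^\nu)'(t)\to s$ from \cref{Lem-shift}. One small slip: your displayed identity for $\psi_{X^\nu}^\nu-\ovlul$ should read $(u_l^\nu-\ovlul)\,g_{X^\nu}^\nu + (u_r^\nu-\ovlul)(1-g_{X^\nu}^\nu)$ (or equivalently $(u_l^\nu-\ovlul) - (u_l^\nu-u_r^\nu)(1-g_{X^\nu}^\nu)$), but the ensuing bound is unaffected.
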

\begin{proof} 
Here we prove \eqref{positive} only, since \eqref{negative} can be proved similarly.

By \eqref{def-a}, $ \ovl{a}(x,t) = \int_{0}^{1} f'\left( \psi_{X^\nu}^\nu  +\rho(u^\nu-\psi_{X^\nu}^\nu)\right)  d\rho, $ where 
\begin{equation*}
	\begin{aligned}
		&\left| ~\psi_{X^\nu}^\nu+\rho(u^\nu-\psi_{X^\nu}^\nu)-\ovlul~\right|  \\
		=~& |~u_l^\nu g_{X^\nu}^\nu+u_r^\nu (1-g_{X^\nu}^\nu)+\rho (u^\nu-u_l^\nu) + \rho (u_l^\nu-u_r^\nu)(1-g_{X^\nu}^\nu) -\ovlul~| \\
		=~& |~(u_l^\nu-\ovlul) -(1- \rho )(u_l^\nu-u_r^\nu)(1-g_{X^\nu}^\nu) +\rho(u^\nu-u_l^\nu)~|  \\
		\leq~ & |u_l^\nu-\ovlul|+2(\ovlul-\ovlur) (1-g_{X^\nu}^\nu)+|u^\nu-u_l^\nu|. 
	\end{aligned}
\end{equation*}
Lemma \ref{Lem-shift} shows that $ |X^\nu(t)-st|\leq C. $ Then by combining Lemmas \ref{Lem-periodic} and \ref{Lem-g} with Proposition \ref{prop-unidecay}, one can get that for any $\e >0,$ there exist $T(\e)>T_2$ and $ N(\e)>0$ such that
$$ |\psi_{X^\nu}^\nu+\rho(u^\nu -\psi_{X^\nu}^\nu)-\ovlul| < \e  \qquad \forall t>T(\e), ~x-X^\nu(t) <-N(\e), $$
and hence,
$$ |~f'(\psi_{X^\nu}^\nu+\rho(u^\nu -\psi_{X^\nu}^\nu))- f'(\ovlul)~| \leq C\e  \qquad \forall t>T(\e), ~x-X^\nu(t) <-N(\e). $$
Then it follows from Lemma \ref{Lem-shift} that 
\begin{align*}
	\ovl{a} - (X^\nu)' & > f'(\ovlul)-C\e-s +[s-(X^\nu)'(t)] \\
	& \geq f'(\ovlul)-C\e-s - Ce^{-\alpha t} \\
	& \geq \frac{f'(\ovlul)-s}{2}   \qquad \forall t>T_3, ~x-X^\nu(t) <-N_0,
\end{align*}
provided that $\e$ is small enough, and $N_0>N(\e) $ and $ T_3>T(\e)$ are large enough. 
\end{proof}
\vspace{0.5cm}

Define the linear operator $L$ as 
\begin{equation}\label{def-L}
L:=\partial_t  - \nu \partial_x^2 + \ovl{a}(x,t)\partial_x .
\end{equation}
Therefore, Proposition \ref{prop-U} shows that $L U^\nu = H^\nu$ on $ \{t\geq T_0 \}. $
For the constant $N_0$ given in Lemma \ref{est-b}, we can define a convex $C^2$ function $\theta$ on $\R$ and the auxiliary function $\Theta$ as \cite{Oleinik1960}:
\begin{align}
&\theta(x):=\begin{cases}
\cosh(\gamma x), &~~\lvert x\rvert \leq N_0,\\
\theta \in C^2,~ 0\leq \theta''\leq \gamma^2 \cosh(\gamma x), & N_0< \lvert x\rvert\leq N_0+1, \\
\text{linear function}, &\lvert x\rvert> N_0+1,
\end{cases} \label{def-theta}\\
& \Theta(x):=e^{-\delta \theta(x)}, \notag
\end{align}
where $\gamma$ and $\delta$ are two positive constants to be determined. 	 

\begin{Lem}\label{ineq-Theta}
	There exist positive constants $\gamma, \delta $ and $ \mu, $ independent of time $ t, $ such that the auxiliary function $ \Theta $ defined above satisfies that
	$$ L\left( \Theta(x-X^\nu(t))\right) \geq 2\mu \Theta\left( x-X^\nu(t)\right)   \qquad \forall x\in \R, ~t>T_3. $$
\end{Lem}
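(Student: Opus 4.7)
My plan is to compute $L\bigl(\Theta(x-X^\nu(t))\bigr)$ explicitly and then reduce the claim to a pointwise algebraic lower bound on an expression in $\theta',\theta''$ and $\bar a-(X^\nu)'$. Writing $z:=x-X^\nu(t)$ and carrying out the differentiations in \eqref{def-L}, one gets
\begin{equation*}
L\bigl(\Theta(x-X^\nu(t))\bigr)=\delta\,\Theta\,\Bigl[\,\nu\theta''(z)\;-\;\nu\delta\bigl(\theta'(z)\bigr)^2\;+\;\theta'(z)\bigl((X^\nu)'(t)-\bar a(x,t)\bigr)\,\Bigr].
\end{equation*}
Thus it suffices to choose $\gamma,\delta>0$ so that the bracket is bounded below by a positive constant (call it $2\mu/\delta$) uniformly in $x\in\R$ and $t>T_3$. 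From Lemma~\ref{Lem-shift} and the boundedness of $u^\nu,u_l^\nu,u_r^\nu$ I get a uniform bound $|(X^\nu)'(t)-\bar a(x,t)|\le C_0$ that will be used throughout.

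I would then split $\R$ into the three regions dictated by the piecewise definition \eqref{def-theta}. In the outer region $|z|>N_0+1$, $\theta''=0$ and $\theta'(z)=A\,\mathrm{sgn}(z)$ for a constant $A>0$ fixed by the $C^2$ matching. By \cref{est-b}, $\bar a-(X^\nu)'$ has sign opposite to $z$ with magnitude $\ge\e_1$, so $\theta'(z)\bigl((X^\nu)'(t)-\bar a\bigr)\ge A\e_1$, and the bracket is at least $A\e_1-\nu\delta A^2\ge A\e_1/2$ once $\delta\le \e_1/(2\nu A)$. In the inner region $|z|\le N_0$, where $\theta(z)=\cosh(\gamma z)$, I use $|\sinh|\le\cosh$ to bound $\nu\theta''(z)+\theta'(z)\bigl((X^\nu)'(t)-\bar a\bigr)\ge\gamma\cosh(\gamma z)\bigl(\nu\gamma-C_0\bigr)$, so choosing $\gamma>2C_0/\nu$ gives a lower bound of order $\nu\gamma^2$; the quadratic term $\nu\delta(\theta')^2\le \nu\delta\gamma^2\sinh^2(\gamma N_0)$ is then absorbed by taking $\delta\le 1/\bigl(4\sinh^2(\gamma N_0)\bigr)$. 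In the transition zone $N_0<|z|\le N_0+1$, $\theta''\ge 0$ and $\theta'(z)$ keeps the same sign as $z$ with $|\theta'(z)|\ge\gamma|\sinh(\gamma N_0)|$, so \cref{est-b} again produces a positive contribution $\ge \gamma\sinh(\gamma N_0)\,\e_1$ from the first-order term, while the $\nu\delta(\theta')^2$ piece is bounded by $\nu\delta A^2$ and again absorbed by a small $\delta$.

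The main delicacy is the order in which the constants are frozen: $\gamma$ must be chosen first, large in terms of $\nu$ and $C_0$, because in the inner region $\cref{est-b}$ is unavailable and one must rely on the pure diffusion term $\nu\theta''$ to beat the bounded drift; only after $\gamma$ is fixed do $\sinh(\gamma N_0)$ and $A$ become explicit quantities, and then $\delta$ is chosen small enough to simultaneously satisfy the three upper bounds above. Setting $2\mu/\delta$ equal to the minimum of the three resulting lower bounds yields the inequality $L\Theta\ge 2\mu\,\Theta$ for all $x\in\R$ and $t>T_3$. No convergence or compactness step is needed, so the only real obstacle is bookkeeping the dependencies between $\gamma,\delta$, and $\mu$; the splitting into three regions is forced precisely by the piecewise definition of $\theta$ and by the fact that \cref{est-b} is only available outside $|z|\le N_0$.
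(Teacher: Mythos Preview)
Your proposal is correct and follows essentially the same approach as the paper's proof: compute $L\Theta$ explicitly, split according to the piecewise definition of $\theta$, use diffusion dominance (large $\gamma$) in the inner region $|z|\le N_0$ where Lemma~\ref{est-b} is unavailable, and use the sign information from Lemma~\ref{est-b} in the outer regions, then choose $\delta$ small to absorb the quadratic term. The only cosmetic difference is that the paper merges your transition zone $N_0<|z|\le N_0+1$ and linear zone $|z|>N_0+1$ into a single case (using the bounds $k_1\le\theta'\le k_2$ there), whereas you treat them separately; your explicit remark on the order in which $\gamma$ and $\delta$ must be fixed is exactly what the paper does implicitly.
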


\begin{proof}	
	In the following we define $\zeta:=x-X^\nu(t)$ for simplicity. 
	
	By \eqref{def-L}, it holds that
	\begin{align*}
		L\left( \Theta(\zeta)\right) = & e^{-\delta \theta(\zeta)}\left[  \delta (X^\nu)'(t) \theta'(\zeta)  -\delta^2 \nu (\theta'(\zeta))^2+\delta \nu \theta''(\zeta)  -\delta \ovl{a}~ \theta'(\zeta) \right] \\
		=&\delta \Theta(\zeta) \Big[ \nu \theta''(\zeta)-\left( \ovl{a}(x,t)-(X^\nu)'(t)\right)\theta'(\zeta)-\delta \nu (\theta'(\zeta))^2 \Big] 
	\end{align*} 
	There are three cases to be considered.
	
	\vspace{0.2cm}

	(1) If $\lvert\zeta\rvert <N_0,$ $\theta(\zeta)= \cosh(\gamma \zeta)$.  Therefore, 
	\begin{align*}
	& \nu \theta''(\zeta)-\left( \ovl{a}(x,t)-(X^\nu)'(t)\right)\theta'(\zeta)-\delta \nu (\theta'(\zeta))^2\\
	= & \nu \gamma^2 \cosh(\gamma \zeta) -\left( \ovl{a}-(X^\nu)'(t) \right)\gamma \sinh(\gamma\zeta)-\nu \delta\gamma^2 \left(\sinh(\gamma\zeta)\right)^2\\
	= & \gamma \cosh(\gamma\zeta)\Big[ \nu \gamma \left(1-\delta \sinh(\gamma\zeta)\tanh(\gamma\zeta) \right) -\left( \ovl{a}-(X^\nu)'(t)\right) \tanh(\gamma\zeta)  \Big].
	\end{align*}	
	Since $ \ovl{a}-(X^\nu)' $ is bounded, one can first choose $\gamma>0$ large enough such that 
	$$ \lvert \ovl{a}-(X^\nu)' \rvert < \frac{\nu\gamma}{4} \qquad \forall x\in\R, t>0. $$
	For $\lvert \zeta\rvert<N_0, \lvert \sinh(\gamma \zeta)\rvert < e^{\gamma N_0}, $ then one can choose $\delta=\delta(\gamma,N_0) > 0$ small enough such that $$ 1-\delta e^{\gamma N_0} \geq \frac{1}{2}. $$
	Hence, by $\lvert \tanh(\gamma \zeta)\rvert \leq 1$ and $ \cosh(\gamma \zeta) \geq 1, $ it holds that for $\lvert\zeta\rvert <N_0$ and $t>T_3,$ 
	\begin{equation*}
	L(\Theta(\zeta))\geq \delta \Theta(\zeta) \cdot \gamma \cosh(\gamma \zeta) \cdot \frac{\nu\gamma}{4} \geq \frac{\delta \nu \gamma^2}{4}\Theta(\zeta).
	\end{equation*}
	
	\vspace{0.3cm}
	
	(2) If $\zeta>N_0,$ by $0\leq \theta''(\zeta)\leq\gamma^2 \cosh(\gamma \zeta), $ it holds that $0<k_1 \leq \theta'(\zeta) \leq k_2$, where  $k_1=\gamma \sinh (\gamma N_0)$ and $k_2=\gamma \sinh (\gamma (N_0+1)).$  
	It follows from the fact $\theta''\geq 0$ and Lemma \ref{est-b} that
	\begin{align*}
	& \nu \theta''(\zeta)-\left( \ovl{a}-(X^\nu)'(t)\right)\theta'(\zeta)-\delta \nu (\theta'(\zeta))^2\\
	\geq&-\left( \ovl{a}-(X^\nu)'\right)\theta'(\zeta)-\delta \nu (\theta'(\zeta))^2\\
	\geq &  \theta'(\zeta) \left( \e_1-\delta \nu \theta'(\zeta) \right)
	\end{align*}
	One can choose $ \delta= \delta(\gamma, N_0, \e_1)>0$ small enough such that for $ \zeta>N_0, $ 
	$$\delta \nu \theta'(\zeta)\leq \delta \nu k_2 \leq \frac{\e_1}{2}. $$ 
	Hence,
	\begin{equation}\label{ineq6}
	L(\Theta(\zeta))\geq \delta \Theta(\zeta) \cdot k_1 \cdot \frac{\e_1}{2} \geq \frac{\delta \e_1 k_1}{2} \Theta(\zeta).
	\end{equation}

	\vspace{0.3cm}
	
	(3) For the case $\zeta<-N_0,$ \eqref{ineq6} can be proved similarly.
	\vspace{0.3cm}

	Collecting (1), (2) and (3), one can prove the lemma by choosing $\gamma$ sufficiently large, $\delta$ sufficiently small and $\mu= \min\left\lbrace \frac{\delta \nu \gamma^2}{8}, \frac{\delta \e_1 k_1}{4}\right\rbrace. $ 
\end{proof}

\vspace{0.3cm}

\subsubsection{Proof of \cref{Prop-ansatz}}~\\

Set $$Z(x,t):=M_2e^{-\mu t} \Theta(x-X^\nu(t))\pm U^\nu(x,t), \quad x\in\R, t\geq 0, $$ 
where $M_2>0$ is a constant to be determined, and $\mu$ is the constant in Lemma \ref{ineq-Theta}, which can be actually chosen small enough with $0<\mu \leq \min\{1,\alpha\}. $ Due to \eqref{ineq-H} and Lemma \ref{ineq-Theta}, it holds that
\begin{align*}
	LZ& = -\mu M_2e^{-\mu t}\Theta(x-X^\nu(t))+M_2e^{-\mu t}L\Theta(x-X^\nu(t)) \pm H^\nu \\
	&\geq (-\mu+2\mu) M_2e^{-\mu t}\Theta(x-X^\nu(t)) - C_0 e^{-\alpha t}e^{-\beta \lvert x-X^\nu(t)\rvert} \\
	&\geq e^{-\mu t}\left(\mu M_2e^{- \delta\theta (x-X^\nu(t))}-C_0 e^{-\beta \lvert x-X^\nu(t)\rvert}\right).
\end{align*}
By \eqref{U-estimate}, one has
$$ Z(x,T_3)\geq M_2 e^{-\mu T_3} e^{-\delta \theta (x-X^\nu(T_3))}-C(T_3)e^{-\beta \lvert x-X^\nu(T_3)\rvert}. $$
We need to consider two cases:
\begin{itemize}
	\item[(1)] If $\lvert x-X^\nu(t) \rvert\leq N_0+1$, then 
	\begin{align*}
		\mu M_2e^{- \delta \theta(x-X^\nu(t))}-C_0 e^{-\beta \lvert x-X^\nu(t)\rvert} & \geq \mu M_2e^{-\delta\theta(N_0+1)}-C_0, \\
		 M_2 e^{-\mu T_3} e^{-\delta \theta (x-X^\nu(T_3))}-C(T_3)e^{-\beta \lvert x-X^\nu(T_3)\rvert} &\geq M_2 e^{-\mu T_3} e^{-\delta \theta (N_0+1)} - C(T_3)
	\end{align*}
	Therefore, choosing $M_2 > \max\left\lbrace \frac{C_0}{\mu} e^{\delta \theta(N_0+1)}, C(T_3) e^{\mu T_3} e^{\delta \theta (N_0+1)} \right\rbrace, $ one gets that $LZ > 0$ and $ Z(x,T_3) >0. $
		
	\item[(2)] If $\lvert x-X^\nu(t) \rvert> N_0+1$, then $\theta$ is linear and $$ \lvert\theta'(x-X^\nu(t)) \rvert <k_2=\gamma\sinh (\gamma(N_0+1)).$$ Therefore, by $ \theta(x) \leq \cosh(x) $ for $ |x|>N_0+1, $ it holds that $$\theta(x-X^\nu(t)) < k_2\lvert x-X^\nu(t)\rvert+\cosh(\gamma (N_0+1)).$$
	Then one can get that
	\begin{align*}
		& \mu M_2e^{- \delta \theta(x-X^\nu(t))}-C_0 e^{-\beta \lvert x-X^\nu(t)\rvert} \\
		& \qquad \geq \mu M_2e^{-\delta \cosh(\gamma (N_0+1))}e^{- \delta k_2 \lvert x-X^\nu(t)\rvert}-C_0 e^{-\beta \lvert x-X^\nu(t)\rvert},\\
		& M_2 e^{-\mu T_3} e^{-\delta \theta (x-X^\nu(T_3))}-C(T_3)e^{-\beta \lvert x-X^\nu(T_3)\rvert} \\
		& \qquad \geq M_2 e^{-\mu T_3} e^{-\delta \cosh(\gamma (N_0+1))} e^{- \delta k_2 \lvert x-X^\nu(T_3)\rvert}-C(T_3)e^{-\beta \lvert x-X^\nu(T_3)\rvert}
	\end{align*}
	Then by choosing $\delta$ small enough with $\delta k_2 \leq \beta,$ and $M_2$ large enough with $$ M_2 > \max\left\lbrace \frac{C_0}{\mu}e^{\delta \cosh(\gamma (N_0+1))}, C(T_3)e^{\mu T_3}e^{\delta \cosh(\gamma (N_0+1))}  \right\rbrace, $$ one can also obtain that $LZ > 0$ and $ Z(x,T_3) >0. $ 
\end{itemize}
By combining (1) with (2), one gets that if $\delta$ is small and $M_2$ is large, $LZ > 0$ and $ Z(x,T_3) > 0 $ for any $x\in \R $ and $t\geq T_3$. 	
Therefore, the maximum principle implies that $Z(x,t)\geq 0$ for any $x\in \R, t\geq T_3$, which yields that
$$\lvert U^\nu(x,t)\rvert \leq M_2e^{-\mu t}\Theta(x-X^\nu(t)) \leq M_2e^{-\mu t} \qquad \forall x\in \R, t\geq T_3.$$
Hence, by the definition \eqref{def-U} of $ U^\nu, $ one has that for any $ x_1 < x_2$ and $t\geq T_3, $ 
\begin{equation} 
    \label{est-U}
    \left| \int_{x_1}^{x_2} \left(u^\nu(y,t)-\psi_{X^\nu}^\nu(y,t)  \right) dy \right|=\lvert U^\nu(x_2,t)-U^\nu(x_1,t)\rvert\leq 2M_2e^{-\mu t}.
\end{equation}
By \cref{entropy} and Lemmas \ref{Lem-periodic} and \ref{Lem-g}, there exists a constant $ M_3>0, $ independent of time $ t, $ such that for any $ x\in \R $ and $ t \geq T_3, $
\begin{equation}\label{ineq7}
    \p_x( u^\nu-\psi^\nu_{X^\nu} ) = \p_x(u^\nu-u_l^\nu)g_{X^\nu}^\nu+\p_x(u^\nu-u_r^\nu)(1-g_{X^\nu}^\nu)-(u_l^\nu-u_r^\nu) (g^\nu_{X^\nu})'
     \leq M_3.
\end{equation}
It then follows from \eqref{est-U} and \eqref{ineq7} that the following claim holds true.

\vspace{0.3cm}
	
$ Claim ~5. $ 
$$ |u^\nu(x,t)-\psi_{X^\nu}^\nu(x,t)| \leq 3\sqrt{M_2M_3} e^{-\frac{\mu}{2}t} \qquad \forall x\in \R, t\geq T_3. $$
	
\vspace{0.3cm}
	
Indeed, if there exists $(x_0, t_0)$ with $x_0 \in \R $ and $ t_0 \geq T_3, $ such that 
$$u^\nu(x_0,t_0)-\psi_{X^\nu}^\nu(x_0,t_0) < -3\sqrt{M_2M_3}e^{-\frac{\mu}{2}t_0}. $$
Then for any $x\in(x_0, x_1),$ where $ x_1:=x_0 +3\sqrt{\frac{M_2}{M_3}}e^{-\frac{\mu}{2}t_0}, $ \eqref{ineq7} yields that $$
\left( u^\nu(x,t_0)-\psi_{X^\nu}^\nu(x,t_0) \right) - \left( u^\nu(x_0,t_0)-\psi_{X^\nu}^\nu(x_0,t_0) \right) \leq M_3(x-x_0). $$ 
Then 
\begin{align*}
	&\int_{x_0}^{x_1} \left( u^\nu(x,t_0)- \psi_{X^\nu}^\nu(x,t_0) \right) dx \\
	\qquad\leq  & ~\left( u^\nu(x_0,t_0) - \psi_{X^\nu}^\nu(x_0,t_0) \right)  (x_1-x_0)+\frac{M_3}{2} (x_1-x_0)^2\\
	\qquad \leq & -3\sqrt{M_2M_3}e^{-\frac{\mu}{2}t_0} \cdot3\sqrt{\frac{M_2}{M_3}}e^{-\frac{\mu}{2}t_0} +\frac{M_3}{2} \cdot 9 \frac{M_2}{M_3}e^{-\mu t_0}\\
	\qquad = & -\frac{9}{2} M_2 e^{-\mu t_0} < -2 M_2 e^{-\mu t_0},
\end{align*}
which contradicts \eqref{est-U}. In the other case for $u^\nu(x_0,t_0)-\psi_{X^\nu}^\nu(x_0,t_0)> 3\sqrt{M_2M_3}e^{-\frac{\mu}{2}t_0}$ at some point $(x_0,t_0)$, a contradiction can be obtained similarly by considering the interval $\left( x_0-3\sqrt{\frac{M_2}{M_3}}e^{-\frac{\mu}{2}t_0}, x_0\right) $ instead.

Therefore, the claim above is proved. This, together with the fact that $ u^\nu $ and $ \psi_{X^\nu}^\nu $ are both bounded, yields \eqref{ineq-ansatz}.

\vspace{0.5cm}


\subsection{More results for Burgers' equation}\label{Sec-Burgers}~\\

In this section, we prove the result (1) in \cref{Thm-1-periodic} and \cref{Prop-Burgers} for the Burgers' equation \eqref{CL}, where $ f(u)=u^2/2$ and the two periodic perturbations are the same.

First, under the assumptions of \cref{Thm-1-periodic}, one can use the Galilean transformation to verify that the periodic functions $ w_l^\nu $ and $ w_r^\nu $ defined in \cref{def-wlr} satisfy
\begin{equation}\label{Galilean}
w_l^\nu (x,t)=w_r^\nu \left(x-(\ovlul-\ovlur)t,t \right),  \qquad  x\in \R, t\geq 0,
\end{equation}
Therefore, it holds that for any $ t>0, $
\begin{equation}\label{equ2}
\begin{aligned}
&\int_{0}^{t}\int_{0}^{p} \Big\lbrace  [f(u_l^\nu)-f(\ovlul)]- [f(u_r^\nu) -f(\ovlur)]\Big\rbrace dx d\tau\\
=&\int_{0}^{t}\int_{0}^{p} \frac{1}{2}\Big\{ \big[2\ovlul+w_l^\nu(x,\tau)\big]w_l^\nu(x,\tau)-  \big[2\ovlur+w_r^\nu(x,\tau)\big]w_r^\nu(x,\tau)\Big\} dx d\tau\\
=&\int_{0}^{t}\int_{0}^{p} \frac{1}{2}\Big[ (w_l^\nu)^2(x,\tau)-(w_r^\nu)^2(x,\tau) \Big] dx d\tau\\
=&\int_{0}^{t}\int_{0}^{p} \frac{1}{2} (w_r^\nu)^2\big(x-(\ovlul-\ovlur)t,\tau\big) dx d\tau-\int_{0}^{t}\int_{0}^{p}  \frac{1}{2} (w_r^\nu)^2(x,\tau) dxd\tau\\
=&\int_{0}^{t}\int_{-(\ovlul-\ovlur)t}^{p-(\ovlul-\ovlur)t} \frac{1}{2} (w_r^\nu)^2(x,\tau) dx d\tau-\int_{0}^{t}\int_{0}^{p}\frac{1}{2}  (w_r^\nu)^2(x,\tau) dx d\tau=0,
\end{aligned}
\end{equation}
where the second equality holds since the averages of $w_l^\nu$ and $w_r^\nu$ are zero. Thus, letting $ t\rightarrow +\infty $ in \cref{equ2} shows that $ X_{\infty,2}^\nu $ defined in \cref{def-X-2} is identically zero.

\vspace{0.2cm}

Now it remains to prove \cref{Prop-Burgers}.
Under the assumptions of \cref{Prop-Burgers}, it holds that $ u_l^\nu(x,t) \geq u_r^\nu(x,t) $ for all $ x\in\R $ and $ t>0, $ thus
\begin{align*}
\int_\R (u_l^\nu(x,t) - u_r^\nu(x,t)) (g^\nu_\xi)'(x) dx  & = \int_0^p (u_l^\nu(x,t) - u_r^\nu(x,t)) \sum_{k \in \mathbb{Z}} (g^\nu_\xi)'(x+kp) dx \\
& \leq -C \int_0^p (u_l^\nu(x,t) - u_r^\nu(x,t)) \sum_{k \in \mathbb{Z}} e^{-\beta(x+kp)} dx \\
& \leq -C e^{-\beta p} \int_0^p (u_l^\nu(x,t) - u_r^\nu(x,t)) dx \\
& = -C e^{-\beta p} p (\ovlul - \ovlur) <0.
\end{align*}
Then the number $ M $ in \cref{add-M} can be chosen to be zero. Moreover, since $ M=0 $ and the initial data given in \cref{Prop-Burgers} satisfies $ \int_\R (u_0(x) - \psi^\nu_0(x)) dx = 0, $ 
the unique number $ \hat{X}_0^\nu $ satisfying \cref{def-X-hat-0} is zero.
Hence, one can get that the curve $ \tilde{X}^\nu $ defined in \cref{def-X-tilde} actually solves 
\begin{equation}\label{ode-shift-1}
	\begin{cases}
	(\tilde{X}^\nu)'(t) = F^\nu(\tilde{X}^\nu,t), &\quad t>0, \\
	\tilde{X}^\nu(0)=0. &\\
	\end{cases}
\end{equation}
Due to \cref{Galilean}, if $ (\ovlul-\ovlur) t_k = kp $ for any $ k\geq 0, $ one has $ w_l^\nu (x,t_k) \equiv w_r^\nu(x,t_k). $
Thus the term $ J^N $ defined in \cref{J-N} and the limit $ J $ satisfy that 
\begin{equation}\label{equ3}
J^N(y,t_k)\equiv 0 \quad \Rightarrow \quad J(y,t_k) = \lim\limits_{N\rightarrow+\infty} J^N(y,t_k) \equiv 0.
\end{equation}
Then taking \cref{equ2}, $ M=\hat{X}_0^\nu=0, $ and \cref{equ3} into \cref{equality-shift} implies that $ \tilde{X}^\nu(t_k) = st_k, $ and thus \cref{X-k} holds true. 

\vspace{0.2cm}

It remains to prove \eqref{burgers-sol}. 
In \cite{Hopf1950}, Hopf introduced the well known Hopf transformation to obtain an explicit formula for the solution to \eqref{CL} with any initial data $ u_0 \in L^\infty, $ which is given by:
\begin{equation}\label{hopf-formula}
u^\nu(x,t) = \dfrac{ \int_\R \frac{x-y}{t} \exp\left\lbrace -\frac{(x-y)^2}{4\nu t} - \frac{1}{2\nu} \int_0^y u_0(\eta) d\eta \right\rbrace dy }{\int_\R \exp\left\lbrace -\frac{(x-y)^2}{4\nu t} - \frac{1}{2\nu} \int_0^y u_0(\eta) d\eta \right\rbrace  dy},  \qquad  x \in \R,  t > 0.
\end{equation}
Since $ u_0 $ is bounded, then integration by parts on the numerator of \eqref{hopf-formula} yields that
\begin{equation}\label{hopf-formula*}
u^\nu(x,t) = \dfrac{ \int_\R u_0(y) \exp\left\lbrace -\frac{(x-y)^2}{4\nu t} - \frac{1}{2\nu} \int_0^y u_0(\eta) d\eta \right\rbrace dy }{\int_\R \exp\left\lbrace -\frac{(x-y)^2}{4\nu t} - \frac{1}{2\nu} \int_0^y u_0(\eta) d\eta \right\rbrace dy},  \qquad x \in \R,  t> 0.
\end{equation}

Without loss of generality (the viscous shock profile is unique up to a shift), the viscous shock $ \phi^\nu $ connecting the end states $ \ovlul $ at $ x=-\infty $ and $ \ovlur $ at $ x=+\infty $ to the Burgers' equation can be given by the explicit formula
$$ \phi^\nu(x) = \frac{\ovlul+\ovlur}{2} - \frac{\ovlul-\ovlur}{2}  \tanh(\frac{\ovlul-\ovlur}{4\nu} x). $$
Set $ \lambda := \frac{\ovlul-\ovlur}{4\nu} $ for convenience. Then the associated $ g^\nu $ defined in \eqref{def-g} is given by
\begin{equation}\label{burgers-g}
	g^\nu(x) = \dfrac{1-\tanh (\lambda x)}{2} = \dfrac{e^{-\lambda x}}{e^{\lambda x}+e^{-\lambda x}},
\end{equation}
satisfying
\begin{equation}\label{vshock-formula}
	\phi^\nu(x) = \ovlul g^\nu(x) + \ovlur (1-g^\nu(x)).
\end{equation} 
And one also has that
\begin{equation}\label{int-g}
	\begin{aligned}
		\int_{0}^{x} g^\nu(y) ~dy & =  \frac{1}{2} \int_{0}^{x} [1- \tanh(\lambda y)]  ~dy = \frac{1}{2\lambda} \log \frac{e^{\lambda x}}{e^{\lambda x}+ e^{-\lambda x}} + \frac{1}{2\lambda} \log 2 \\
		& =  \frac{1}{2\lambda} \log (1-g^\nu(x)) + \frac{1}{2\lambda} \log 2.
	\end{aligned}
\end{equation}
Similarly, 
\begin{equation}\label{int-1-g}
	\int_{0}^{x} [1-g^\nu(y)] ~dy =  -\frac{1}{2\lambda} \log g^\nu(x) - \frac{1}{2\lambda} \log 2.
\end{equation}
If the initial data $ u_0(x) = \phi^\nu(x) + w_0(x) $ with $ \int_{0}^{p} w_0(x) dx = 0, $ 
then it follows from \cref{hopf-formula*} and \cref{vshock-formula} that 
\begin{equation}\label{formula-u}
	u^\nu(x,t) = \dfrac{P_l(x,t)+P_r(x,t)}{Q_l(x,t) +Q_r(x,t)},
\end{equation}
where the two terms in the numerator are
\begin{equation*}
	\begin{aligned}
		P_l(x,t) &:= 2 \int_\R \left(\ovlul+w_0(y)\right) g^\nu(y) \exp\left\lbrace -\frac{(x-y)^2}{4\nu t} - \frac{1}{2\nu} \int_0^y u_0(\eta) d\eta \right\rbrace dy, \\
		P_r(x,t) &:= 2  \int_\R \left(\ovlur+w_0(y)\right) \left(1-g^\nu(y)\right) \exp\left\lbrace -\frac{(x-y)^2}{4\nu t} - \frac{1}{2\nu} \int_0^y u_0(\eta) d\eta \right\rbrace dy,
	\end{aligned}
\end{equation*}
and the two terms in the denominator are
\begin{equation*}
	\begin{aligned}
		Q_l(x,t) & := 2 \int_\R g^\nu(y)  \exp\left\lbrace -\frac{(x-y)^2}{4\nu t} - \frac{1}{2\nu} \int_0^y u_0(\eta) d\eta \right\rbrace dy, \\
		Q_r(x,t) & := 2 \int_\R \left(1-g^\nu(y)\right) \exp\left\lbrace -\frac{(x-y)^2}{4\nu t} - \frac{1}{2\nu} \int_0^y u_0(\eta) d\eta \right\rbrace dy. 
	\end{aligned}
\end{equation*}
It can follow from \eqref{int-1-g}, $ \lambda = \frac{\ovlul-\ovlur}{4\nu}, $ and $u_0=\ovlul g^\nu +\ovlur (1-g^\nu)+w_0 = \ovlul+w_0-(\ovlul-\ovlur)(1-g^\nu)$ that
\begin{align*}
	& g^\nu(y)\exp\left\lbrace -\frac{1}{2\nu} \int_0^y u_0(\eta) d\eta \right\rbrace \\
	= &~ g^\nu(y) \exp\left\lbrace 2 \lambda \int_{0}^{y} \left( 1-g^\nu(\eta)\right) d\eta \right\rbrace  \exp\left\lbrace - \frac{1}{2\nu} \int_0^y \left( \ovlul+w_0(\eta)\right) d\eta \right\rbrace  \\
	=&~ \frac{1}{2}  \exp\left\lbrace  - \frac{1}{2\nu} \int_0^y \left( \ovlul+w_0(\eta)\right) d\eta \right\rbrace.
\end{align*}
Due to \eqref{int-g}, similar calculations yield
\begin{align*}
	 \left( 1-g^\nu(y)\right) \exp\left\lbrace - \frac{1}{2\nu} \int_0^y u_0(\eta) d\eta \right\rbrace 
	=  \frac{1}{2} \exp\left\lbrace - \frac{1}{2\nu} \int_0^y \left( \ovlur+w_0(\eta) \right)  d\eta\right\rbrace .
\end{align*}
Hence, one has that
\begin{equation*}
\begin{aligned}
P_l(x,t) & = \int_\R \left( \ovlul+w_0(y) \right) \exp\left\lbrace -\frac{(x-y)^2}{4\nu t} - \frac{1}{2\nu} \int_0^y \left( \ovlul+w_0(\eta) \right)  d\eta \right\rbrace dy, \\
P_r(x,t) & = \int_\R \left( \ovlur+w_0(y) \right) \exp\left\lbrace -\frac{(x-y)^2}{4\nu t} - \frac{1}{2\nu} \int_0^y \left( \ovlur+w_0(\eta) \right)  d\eta \right\rbrace dy, \\
Q_l(x,t) & = \int_\R ~\exp\left\lbrace -\frac{(x-y)^2}{4\nu t} - \frac{1}{2\nu} \int_0^y \left( \ovlul+w_0(\eta) \right)   d\eta \right\rbrace dy, \\
Q_r(x,t) & = \int_\R \exp\left\lbrace -\frac{(x-y)^2}{4\nu t} - \frac{1}{2\nu} \int_0^y \left( \ovlur+w_0(\eta) \right)  d\eta \right\rbrace dy.
\end{aligned}
\end{equation*}
Moreover, by using the Hopf formula \eqref{hopf-formula*} for $ u_l^\nu(x,t) $ and $ u_r^\nu(x,t), $ respectively, one gets that 
\begin{equation}\label{formula-ulr}
	u_l^\nu(x,t)= \dfrac{P_l(x,t)}{Q_l(x,t)} \quad \text{ and } \quad u_r^\nu(x,t)= \dfrac{P_r(x,t)}{Q_r(x,t)}.
\end{equation}
If $ t=t_k = \frac{kp}{\ovlul-\ovlur}=\frac{kp}{4\nu\lambda}, $ it holds that
\begin{align*}
	Q_r(x,t_k) & = \int_\R \exp\left\lbrace  -\frac{(x-y)^2}{4\nu t_k} +\frac{1}{2\nu}\int_{0}^{y} 4\nu\lambda d\eta - \frac{1}{2\nu} \int_0^y \left( \ovlul+w_0(\eta) \right)   d\eta \right\rbrace dy \\
	& = e^{2\lambda x + 4\lambda^2 \nu t_k} \int_\R \exp\left\lbrace  -\frac{(y-x-4\lambda\nu t_k)^2}{4\nu t_k} - \frac{1}{2\nu} \int_0^y \left( \ovlul+w_0(\eta) \right)  d\eta \right\rbrace dy \\
	& = e^{2\lambda x + 4\lambda^2\nu t_k}  \int_\R \exp\left\lbrace  -\frac{(y-x)^2}{4\nu t_k} - \frac{1}{2\nu} \int_0^{y+4\lambda\nu t_k} \left( \ovlul+w_0(\eta) \right)  d\eta \right\rbrace dy\\
	&= e^{2\lambda x + 4\lambda^2 \nu t_k - 2\lambda \ovlul t_k}  \int_\R \exp\left\lbrace  -\frac{(y-x)^2}{4\nu t_k} - \frac{1}{2\nu} \int_0^{y} \left( \ovlul+w_0(\eta) \right) d\eta \right\rbrace \\
	& \qquad \qquad \qquad\qquad\qquad \times \exp \left\lbrace - \frac{1}{2\nu} \int_y^{y+4\lambda\nu t_k} w_0(\eta)  d\eta \right\rbrace dy.
\end{align*}
Then due to $ \int_{0}^{p} w_0(x) dx =0 $ and $ 4\lambda\nu t_k=kp, $ it holds that
\begin{equation*}
Q_r(x,t_k) = e^{2\lambda (x-st_k)}  Q_l(x,t_k), \qquad x \in \R.
\end{equation*}
Similarly, one can get that
\begin{equation*}
P_r(x,t_k) = e^{2\lambda (x-st_k)} P_l(x,t_k) - 4\lambda e^{2\lambda (x-st_k)} Q_l(x,t_k), \qquad x \in \R.
\end{equation*}
Hence, \eqref{formula-u} yields that
\begin{equation}\label{equ1}
	\begin{aligned}
		u^\nu(x,t_k) & = \dfrac{P_l(x,t_k) + e^{2\lambda (x-st_k)} P_l(x,t_k) - 4\lambda e^{2\lambda (x-st_k)}Q_l(x,t_k) }{Q_l(x,t_k)+ e^{2\lambda (x-st_k)} Q_l(x,t_k)} \\
		& = \dfrac{P_l(x,t_k)}{Q_l(x,t_k)} - 4\lambda \left(1-g^\nu(x-st_k)\right),
	\end{aligned}
\end{equation}
where $ g^\nu $ is defined in \eqref{burgers-g}.
Meanwhile, \eqref{formula-ulr} yields that
$$ u_l^\nu(x,t_k) = \dfrac{P_l(x,t_k)}{Q_l(x,t_k)}~ \text{ and } ~u_r^\nu(x,t_k) = \dfrac{P_l(x,t_k)-4\lambda~ Q_l(x,t_k)}{Q_l(x,t_k)} = \dfrac{P_l(x,t_k)}{Q_l(x,t_k)} - 4\lambda, $$
which, together with \eqref{equ1}, yields \eqref{burgers-sol}.

\vspace{0.5cm}


\subsection{An example of non-zero shift}\label{Sec-counter-ex}~\\
	
In this section, we prove the result (2) in \cref{Thm-1-periodic}, where the two periodic perturbations are assumed to be the same.
For any given periodic perturbation $w_0$ with zero average and $0<\|w_0\|_{L^{\infty}(\R)}< (\ovlul-\ovlur)/2$, it holds that $ \ovlur + \|w_0\|_{L^{\infty}(\R)} < \ovlul-\|w_0\|_{L^{\infty}(\R)}, $ then one can construct a smooth and strictly convex function $f$ such that $f(u)=\frac{1}{2n}u^2$ when $u\leq \ovlur+\|w_0\|_{L^{\infty}(\R)}$ and $f(u)=\frac{1}{2}u^2$ when $u\geq \ovlul-\|w_0\|_{L^{\infty}(\R)}$, where $n$ is a positive number to be determined later; see \cref{Fig-f}.
	\begin{figure}[htbp!]
		\scalebox{0.4}{\includegraphics{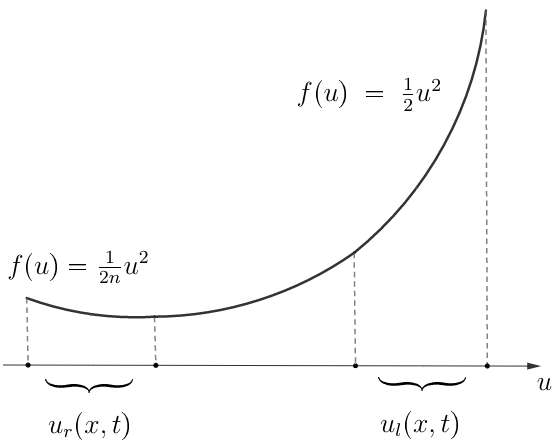}}  
		\caption{Construction of $ f $}\label{Fig-f}
	\end{figure}
	
Since for any $ x \in \R $ and $ t > 0, $ it holds that
\begin{equation*}
	\begin{aligned}
	& u_l^\nu(x,t) \geq \inf u_l^\nu(\cdot,0) \geq \ovlul-\|w_0\|_{L^{\infty}(\R)},  \\
	& u_r^\nu(x,t) \leq \sup u_r^\nu(\cdot,0) \leq \ovlur + \|w_0\|_{L^{\infty}(\R)}.
	\end{aligned}
\end{equation*}
And note that $ w_l^\nu=u_l^\nu-\ovlul $ and $ w_r^\nu=u_r^\nu-\ovlur $ have zero average, then one has that
\begin{equation}\label{numl}
	\begin{aligned}
	\int_{0}^{\infty}\int_{0}^{p} \left[f(u_l^\nu)-f(\ovlul)\right] dxd\tau & = \frac{1}{2}\int_{0}^{\infty}\int_{0}^{p} \left[ (u_l^\nu)^2 - \ovlul^2 \right] dxd\tau \\
	& = \frac{1}{2}\int_{0}^{\infty}\int_{0}^{p}\left[ (\ovlul+w_l^\nu)^2 - \ovlul^2 \right]  dxd\tau \\
	& = \frac{1}{2}\int_{0}^{\infty}\int_{0}^{p} (w_l^\nu)^2 \ dxd\tau.
	\end{aligned}
\end{equation}
Similarly,
\begin{equation}\label{num2}
\begin{aligned}	
\int_{0}^{\infty}\int_{0}^{p} \left[f(u_r^\nu)-f(\ovlur)\right] dxd\tau & = \frac{1}{2n}\int_{0}^{\infty}\int_{0}^{p} \left[  (u_r^\nu)^2-\ovlur^2 \right] dxd\tau \\
& = \frac{1}{2n}\int_{0}^{\infty}\int_{0}^{p} (w_r^\nu)^2 dxd\tau.
\end{aligned}
\end{equation}

Since $ w_0 $ is not identically zero, the solution $ u_l^\nu(x,t) $ with the initial data $\ovlul+w_0(x)$ cannot be a constant in $ \R \times [0,+\infty), $ thus the integral of \eqref{numl} is positive. And more importantly, this integral is independent of $ n, $ since no matter what $ n $ is, the range of $ u_l^\nu(x,t) $ is always in the interval where $ f(u) $ is $ u^2/2$, which means that $ u_l^\nu(x,t) $ is actually a solution to the Burgers' equation.
	
On the other side, for the solution $ u_r^\nu(x,t), $ \cref{app2} yields that
\begin{equation*}
	\int_{0}^{+\infty} \int_{0}^{p} (w_r^\nu)^2 dx d\tau \leq C,
\end{equation*}
where $ C $ is independent of $ f, $ depending only on $ \nu, p $ and $ \|u^\nu_r(x,0)\|_{L^\infty}. $ 
It follows from this and \eqref{num2} that
\begin{equation}\label{num3}
	\int_{0}^{\infty}\int_{0}^{p} [f(u_r^\nu)-f(\ovlur)]\ dxd\tau 
	\leq  \frac{C}{2n}. 
\end{equation} 
By \eqref{numl} and \eqref{num3}, one can choose $n$ sufficiently large such that
$$ \int_{0}^{\infty}\int_{0}^{p} [f(u_l^\nu)-f(\ovlul)]\ dxd\tau > \int_{0}^{\infty}\int_{0}^{p} [f(u_r^\nu)-f(\ovlur)]\ dxd\tau, $$
which implies that $ X_{\infty,2}^\nu \neq 0. $ 	
The proof of Theorem \ref{Thm-1-periodic} is finished.

\vspace{0.5cm}


\subsection{Vanishing viscosity limit for the shift}\label{Sec-vanishing}~\\

We now study the vanishing viscosity limit for $ X_{\infty,2}^\nu $ defined in \cref{def-X-2}.

\begin{Lem}\label{Lem-Xin}
	There exists a constant $ C>0, $
	independent of time $ t $ or viscosity $ \nu, $ such that for all $ 0\leq \nu \leq 1, $ it holds that
	\begin{equation}\label{uni-bdd}
		\sup_{x \in \R}|u_l^\nu - \ovlul| + \sup_{x \in \R}|u_r^\nu - \ovlur| \leq \frac{C}{1+t},  \quad t>0.
	\end{equation}	
\end{Lem}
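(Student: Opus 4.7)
The plan is to derive the one-sided Oleinik-type bound \cref{entropy} combined with the zero-average periodicity to get pointwise $1/t$ decay, which is the standard recipe Lax used for the inviscid case and which transfers directly to the viscous case because the constant $E$ in \cref{entropy} is \emph{independent of $\nu$}. I will prove only the estimate for $u_l^\nu$; the argument for $u_r^\nu$ is identical.

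First I would verify that the periodicity and zero-average condition are propagated: since $u_l^\nu$ remains periodic with period $p_l$ and $\partial_t \int_0^{p_l} u_l^\nu(x,t)\,dx = -[f(u_l^\nu) - \nu\partial_x u_l^\nu]_0^{p_l} = 0$, we have
\begin{equation*}
\frac{1}{p_l}\int_0^{p_l}(u_l^\nu(x,t) - \ovlul)\,dx = 0 \qquad \forall\, t \geq 0.
\end{equation*}
Next, apply \cref{entropy} to the initial data $\ovlul + w_{0l}$: since its $L^\infty$-norm is bounded uniformly in $\nu$, the constant $E$ depends only on $f, \ovlul$ and $\|w_{0l}\|_{L^\infty}$, so
\begin{equation*}
\partial_x u_l^\nu(x,t) \leq \frac{E}{t},\qquad x\in\R,\ t>0,
\end{equation*}
uniformly in $\nu \in (0,1]$.

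Now fix $t>0$ and pick $x_0$ where $w(x) := u_l^\nu(x,t) - \ovlul$ attains its maximum $\delta := \max_x w \geq 0$ over one period (which exists by continuity and periodicity). For $x \in [x_0 - p_l, x_0]$ the one-sided bound gives $w(x_0) - w(x) \leq (E/t)(x_0 - x)$, hence $w(x) \geq \delta - (E/t)(x_0-x)$. Integrating over $[x_0 - p_l, x_0]$ and using the zero-average condition yields
\begin{equation*}
0 = \int_{x_0 - p_l}^{x_0} w(x)\,dx \;\geq\; \delta p_l - \frac{E p_l^2}{2 t},
\end{equation*}
so $\max w \leq E p_l/(2t)$. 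Applying the same argument to $-w$ (using the one-sided bound from above integrated on $[x_0,x_0+p_l]$ where now $x_0$ is a minimum of $w$) gives $-\min w \leq E p_l/(2t)$. Therefore $\|u_l^\nu - \ovlul\|_{L^\infty} \leq C/t$ with $C$ independent of $\nu$ and $t$.

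Finally, to absorb small times into the $(1+t)^{-1}$ form, I would use the elementary $L^\infty$ bound from the comparison principle: $u_l^\nu(x,t)$ lies between $\ovlul \pm \|w_{0l}\|_{L^\infty}$ for all $t\geq 0$, so $\|u_l^\nu - \ovlul\|_{L^\infty} \leq \|w_{0l}\|_{L^\infty}$. Combining the trivial bound on $[0,1]$ with the $C/t$ bound on $[1,\infty)$ yields the desired $C/(1+t)$ estimate. There is no real obstacle here given \cref{entropy}; the only point requiring a moment of care is confirming that the constant $E$ in \cref{entropy} can be taken uniform in $\nu\in(0,1]$, which is built into the hypothesis of that inequality (dependence only on $f$ and $\|u_0\|_{L^\infty}$). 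For the limiting case $\nu=0$ needed in \cref{uni-bdd}, one invokes the standard Oleinik inviscid estimate, which has the same form.
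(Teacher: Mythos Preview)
Your proof is correct and rests on the same key input as the paper's: the $\nu$-independent Oleinik bound \cref{entropy} combined with the zero-average periodicity. The only difference is cosmetic---the paper first bounds the total variation over one period via $\int_{\{\partial_x u_i^\nu<0\}}|\partial_x u_i^\nu|=\int_{\{\partial_x u_i^\nu>0\}}\partial_x u_i^\nu\leq p_iE/t$ and then uses $\sup|u_i^\nu-\ovl{u}_i|\leq\mathrm{TV}$, whereas you integrate the one-sided slope bound directly from an extremum; your route in fact yields the slightly sharper constant $Ep_l/(2t)$ versus $2p_lE/t$, but the arguments are interchangeable.
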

\begin{proof}
	The proof can be found in Xin \cite{Xin}, which relies on the Ole\v{\i}nik's entropy condition \eqref{entropy}. For $ i = l $ or $ r, $ it follows from $ \int_{0}^{p_i} \p_x u_i^\nu(x,t) dx =0 $ and \eqref{entropy} that for any $ 0< \nu \leq 1 $ and $ t>0, $
	$$ \int_{x \in (0,p_i), \p_x u_i^\nu <0} |\p_x u_i^\nu(x,t)| dx = \int_{x \in (0,p_i), \p_x u_i^\nu >0} \p_x u_i^\nu(x,t) dx \leq \frac{p_i E}{t}, $$
	which yields that for any $ 0 <\nu \leq 1 $ and $ t>0, $
	$$ \sup_x |u_i^\nu - \ovlul| \leq \int_{0}^{p_i} |\p_x u_i^\nu(x,t)| dx \leq \frac{2p_i E}{t}. $$
	Since $ u_l^\nu $ and $ u_r^\nu $ converges almost everywhere to the periodic entropy solutions $ u^0_l $ and $ u^0_r, $  respectively, \eqref{uni-bdd} also holds true for $ \nu=0. $	
\end{proof}

\vspace{0.3cm}

\begin{proof}[Proof of \cref{Thm-vanishing}]	
	It follows from Taylor's expansion and zero average of $ u_l^\nu-\ovlul $ that 
	\begin{align*}
	\int_{0}^{p_l} [f(u_l^\nu)-f(\ovlul)]\ dx = & \int_{0}^{p_l} \left(f'(\ovlul) (u_l^\nu- \ovlul) + \frac{1}{2} f''(\cdot) (u_l^\nu- \ovlul)^2\right) dx \\
	= & \frac{1}{2} \int_{0}^{p} f''(\cdot) (u_l^\nu- \ovlul)^2 dx.
	\end{align*}
	This, together with Lemma \ref{Lem-Xin} and the strict convexity of $ f, $ implies that
	\begin{equation}\label{uni1}
	0< \int_{0}^{p_l} [f(u_l^\nu)-f(\ovlul)]\ dx \leq \frac{C}{(1+t)^2},
	\end{equation} 
	where $ C $ is independent of $ \nu $ or $ t. $
	Similarly, one has that
	\begin{equation}\label{uni2}
	0< \int_{0}^{p_r} [f(u_r^\nu)-f(\ovlur)]\ dx \leq \frac{C}{(1+t)^2}.
	\end{equation}
	Hence, applying the dominated convergence theorem in \cref{def-X-2} yields that, as $ \nu \rightarrow 0+, $
	\begin{equation}\label{limit}	
	\begin{aligned}
	X_{\infty,2}^\nu \rightarrow & \int_{0}^{+\infty} \frac{1}{p_l} \int_{0}^{p_l} [f(u_l^0)-f(\ovlul)] dxdt -\frac{1}{p_l}\int_{0}^{p_l}\int_{0}^{x} w_{0l}(y) dydx \\
	& - \int_{0}^{+\infty} \frac{1}{p_l} \int_{0}^{p_r} [f(u_r^0) -f(\ovlur)]\ dxdt +\frac{1}{p_r}\int_{0}^{p_r}\int_{0}^{x} w_{0r}(y) dydx.
	\end{aligned}	
	\end{equation}
	Now we prove that this limit is equal to 
	\begin{equation*}
	X_{\infty,2}^0 = - \min_{x\in\R} \int_{0}^{x} w_{0l}(y)dy + \min_{x\in\R} \int_{0}^{x} w_{0r}(y)dy.
	\end{equation*}
	
	\vspace{0.2cm}
	
	In fact, for $ i = l $ or $ r, $ since the anti-derivative variable $ \int_{0}^{x} w_{0i}(y)dy $ is continuous and periodic with the period $ p_i, $ one can choose a constant $ x_i \in [0,p_i) $ such that
	\begin{equation}\label{point-min}
	\int_{0}^{x_i} w_{0i}(y)dy = \min_{x\in [0,p_i]} \int_{0}^{x} w_{0i}(y)dy,
	\end{equation}
	which is equivalent to
	\begin{equation*}
	\int_{x_i}^{x} w_{0i}(y)dy \geq 0, \qquad x\in \R.
	\end{equation*}
	Then one can finish the proof of \cref{vanishing limit} if it holds that
	\begin{equation}\label{equality-inviscid}
	\int_{0}^{+\infty}\int_{0}^{p_i} \left( f(u^0_i)-f(\ovl{u}_i) \right) dxdt = \int_{0}^{p_i}\int_{x_i}^{x} w_{0i}(y) dydx.
	\end{equation}
	To prove \cref{equality-inviscid}, it follows from \cite[Theorem 14.1.1]{Dafe2016} that the periodic entropy solution $ u^0_i $ takes the constant value $ \ovl{u}_i $ along the straight line $ x=x_i+f'(\ovl{u}_i) t. $ Then for any given $ y \in (x_i, x_i+p_i) $ and $ t>0, $ denote the domain:
	\begin{equation*}
	\Omega_{(y,t)} := \left\lbrace  (x,\tau): x_i+f'(\ovl{u}_i) \tau < x < y+f'(\ovl{u}_i) \tau,~ 0<\tau<t \right\rbrace.
	\end{equation*}
	Integrating the equation $ \p_t u^0_i + \p_x f(u^0_i) = 0 $ over $ \Omega_{(y,t)}, $ one can obtain that
	\begin{align}
	0 = & -\int_{x_i}^{y} \left( \ovl{u}_i + w_{0i}(x)\right) dx + \int_{0}^{t} \left(  f(u^0_i)-f'(\ovl{u}_i)u^0_i \right) ( y+f'(\ovl{u}_i)\tau,\tau) d\tau \notag \\
	& - \int_{0}^{t} \left(  f(\ovl{u}_i)-f'(\ovl{u}_i)\ovl{u}_i \right) d\tau + \int_{x_i+f'(\ovl{u}_i)t}^{y+f'(\ovl{u}_i)t} u^0_i(x,t) dx \notag\\
	= & -\int_{x_i}^{y} w_{0i}(x) dx + \int_{0}^{t} \left(  f(u^0_i)-f(\ovl{u}_i)  \right) (y+f'(\ovl{u}_i)\tau,\tau)d\tau \label{int-part1} \\
	& - f'(\ovl{u}_i) \int_{0}^{t} (u^0_i-\ovl{u}_i)(y+f'(\ovl{u}_i) \tau, \tau) d\tau + \int_{x_i+f'(\ovl{u}_i)t}^{y+f'(\ovl{u}_i)t} \left( u^0_i(x,t)- \ovl{u}_i\right)  dx. \notag
	\end{align}	
	Since for any $ y \in \R, t\geq 0, $ $ \int_{y}^{y+p_i} (u^0_i-\ovl{u}_i)(x,t)dx = 0, $ thus one can integrate \eqref{int-part1} with respect to $ y $ over $ (x_i,x_i+p_i) $ to get that
	\begin{align*}
	0=& - \int_{x_i}^{x_i+p_i} \int_{x_i}^{y} w_{0i}(x) dxdy + \int_{x_i}^{x_i+p_i} \int_{0}^{t} \left(  f(u^0_i)-f(\ovl{u}_i) \right) (y+f'(\ovl{u}_i)\tau,\tau) d\tau dy \\
	& + \int_{x_i}^{x_i+p_i} \int_{x_i+f'(\ovl{u}_i)t}^{y+f'(\ovl{u}_i)t} \left( u^0_i(x,t)- \ovl{u}_i\right) dxdy
	\end{align*}	
	Since $ \int_{x_i}^{y} w_{0i}(x) dx $ is periodic with respect to $ y, $ and $$ \int_{x_i}^{x_i+p_i} \int_{x_i +f'(\ovl{u}_i)t}^{y+f'(\ovl{u}_i)t} \left( u^0_i(x,t)- \ovl{u}_i \right) dxdy = O\left( \frac{1}{1+t}\right), $$ then it holds that
	\begin{align}
	\int_{0}^{t} \int_{0}^{p_i} \left(  f(u^0_i(x, \tau))-f(\ovl{u}_i)\right)  dx d\tau = \int_{0}^{p_i} \int_{x_i}^{y} w_{0i}(x) dxdy + O\left( \frac{1}{1+t}\right). \label{equ4}
	\end{align}
	Similar to the proof of \eqref{uni1}, one can show that $$ \int_{0}^{p_i} \left(  f(u^0_i(x, \tau))-f(\ovl{u}_i)\right)  dx = O\left(\frac{1}{(1+\tau)^2}\right). $$ 
	Then \cref{equality-inviscid} follows by letting $ t \rightarrow +\infty $ in \eqref{equ4} and using the dominated convergence theorem.
	
	\vspace{0.2cm}
	
	It remains to prove \cref{rate-2} to finish the proof of \cref{Thm-vanishing}.
	If both the periodic perturbations $ w_{0l} $ and $ w_{0r} $ have bounded total variations on the respective periodic domains: 
	$$ \text{TV}_{[0,p_l]} w_{0l} < +\infty, \quad \text{TV}_{[0,p_r]} w_{0r} < +\infty, $$ then it can be derived from Kruzhkov's theory (see \cite{Kuznetsov1976,Kreiss1988}) that, for $ i=l $ or $ r, $ the viscous solution $ u_i^\nu $ to \cref{CL} tends to the inviscid entropy solution $ u_i^0 $ in the $ L^1 $ norm at the following rate:
	\begin{equation}\label{rate}
	\int_{0}^{p_i} \left| u_i^\nu(x,t) - u_i^0(x,t)\right|  dx \leq C (\nu t)^{1/2} ~\text{TV}_{[0,p_i]} w_{0i}, \quad  t>0, 0 \leq \nu \leq 1,
	\end{equation}
	where $ C>0 $ is independent of $ \nu $ or $ t. $
	
	Then for any given $ T >0, $ and $ i =l $ or $ r, $ one has that
	\begin{align*}
	& \int_{0}^{+\infty}\int_{0}^{p_i} \left( f(u_i^\nu)-f(\ovl{u}_i) \right) dxd\tau - \int_{0}^{+\infty}\int_{0}^{p_i} \left( f(u_i^0)-f(\ovl{u}_i) \right) dxd\tau \\
	= & ~\int_{0}^{T}\int_{0}^{p_i} \left( f(u_i^\nu)-f(u_i^0) \right) dxd\tau
	+ \int_{T}^{+\infty}\int_{0}^{p_i} \left( f(u_i^\nu) -f(\ovl{u}_i) \right)  dxd\tau \\
	& - \int_{T}^{+\infty}\int_{0}^{p_i} \left( f(u_i^0) -f(\ovl{u}_i) \right) dxd\tau.
	\end{align*}
	Then it follows from \eqref{uni1}, \eqref{uni2} and \eqref{rate} that for $ i =l $ or $ r, $
	\begin{equation*}
	\begin{aligned}	
	& \Big|\int_{0}^{+\infty}\int_{0}^{p_i} \left( f(u_i^\nu)-f(\ovl{u}_i) \right) dxd\tau - \int_{0}^{+\infty}\int_{0}^{p_i} \left( f(u_i^0)-f(\ovl{u}_i) \right) dxd\tau \Big| \\
	\leq &~ C~ \Big\{ \nu^{1/2} \int_{0}^{T} t^{1/2} dt + \int_{T}^{+\infty} \frac{1}{(1+t)^2} dt \Big\} \leq C ~\left( \nu^{1/2} T^{3/2} + T^{-1} \right).
	\end{aligned}	
	\end{equation*}
	Letting $ T = \nu^{-1/5} $ in the above inequality yields that for $ i =l $ or $ r, $
	\begin{equation*}
	\left| \int_{0}^{+\infty}\int_{0}^{p_i} \left( f(u_i^\nu)-f(\ovl{u}_i)\right) dxd\tau - \int_{0}^{+\infty}\int_{0}^{p_i} \left( f(u_i^0)-f(\ovl{u}_i)\right) dxd\tau \right|
	\leq  C \nu^{1/5}.	
	\end{equation*}
	It follows from this and the formulas of $ X_{\infty,2}^\nu $ and $ X_{\infty,2}^0 $ that
	\begin{equation*}
	| X_{\infty,2}^\nu - X_{\infty,2}^0 | \leq C \nu^{1/5},
	\end{equation*}
	where $ C>0 $ is independent of $ \nu. $ 

\end{proof}
\vspace{0.5cm}

\section{Stability of rarefaction waves}\label{Sec-rare}

The proof of Theorem \ref{Thm-rare} can follow from the idea in \cite{Oleinik1960}. To make this paper complete, we still give the details here.
The proof consists of two steps. 
The first step is to prove a time-independent estimate of the solution $ u, $ just as Proposition \ref{prop-unidecay} for the shock profile.
Step 2 is to construct an auxiliary function and use the maximal principle to complete the proof.

\begin{Prop}\label{prop-rare-unidecay}
	For any $\e>0$, there exist $N_\e>0$ and $T_\e>0 $ such that 
	\begin{align}
	&  |u^\nu(x,t)- \ovlul| \leq \e, \qquad t>T_\e, ~ x-f'(\ovlul)t<-N_\e, \label{rare-unidecay-l} \\
	&  |u^\nu(x,t)- \ovlur| \leq \e, \qquad t>T_\e,~ x-f'(\ovlur)t>N_\e.\label{rare-unidecay-r}
	\end{align}
\end{Prop}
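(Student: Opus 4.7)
The proof of \cref{rare-unidecay-l} would follow the template of Proposition \ref{prop-unidecay}, with the moving frame of the shock $x-st$ replaced by the characteristic frame $x-f'(\ovlul)t$ at the left edge of the rarefaction fan; the proof of \cref{rare-unidecay-r} is analogous with $\ovlur$ and $f'(\ovlur)$ in place of $\ovlul$ and $f'(\ovlul)$. First, I would combine \cref{Lem-periodic} and \cref{Lem-decay-ulr} at one fixed large time $T^*$: \cref{Lem-periodic} gives $\|u_l^\nu(\cdot,T^*)-\ovlul\|_{L^\infty}\leq\e/3$, and \cref{Lem-decay-ulr} then produces an $R_\e>0$ with $|u^\nu(x,T^*)-u_l^\nu(x,T^*)|\leq\e/3$ for all $x\leq-R_\e$, so $|u^\nu(x,T^*)-\ovlul|\leq 2\e/3$ on that static half-line. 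The remaining task is to propagate this bound into the moving half-line $\{x-f'(\ovlul)t\leq-N_\e\}$ for all $t\geq T_\e$.

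For the propagation step I would introduce the anti-derivative $U_l^\nu(x,t):=\int_{-\infty}^x(u^\nu-u_l^\nu)(y,t)dy$, which is finite by \cref{Lem-decay-ulr} and, as in \cref{Lem-anti}, satisfies the linear parabolic equation
\begin{equation*}
\partial_t U_l^\nu - \nu\,\partial_x^2 U_l^\nu + \bar{a}(x,t)\,\partial_x U_l^\nu = 0, \qquad \bar{a}(x,t):=a(u^\nu,u_l^\nu)(x,t).
\end{equation*}
Mimicking Steps 1--3 of the proof of Proposition \ref{prop-unidecay}, I would build an explicit super-solution and sub-solution of exponential type anchored on the characteristic line $x=f'(\ovlul)t$: the extra translation speed in the exponential is chosen small enough to dominate the effective drift $\bar{a}-f'(\ovlul)$, which, thanks to the preliminary step and \cref{Lem-periodic}, is itself arbitrarily small in a neighbourhood of the relevant half-line once $T^*$ is large and $R_\e$ is chosen accordingly. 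The maximum principle then yields $|U_l^\nu(x,t)|\leq C\e$ on the moving half-line.

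Finally, I would convert this integral bound into a pointwise bound exactly as in Step 4 of the proof of Proposition \ref{prop-unidecay}: Ole\v{\i}nik's estimate \cref{entropy}, together with $\|\partial_x u_l^\nu\|_{L^\infty}=O(e^{-\alpha t})$ from \cref{Lem-periodic}, produces a one-sided Lipschitz bound on $u^\nu-u_l^\nu$; combined with $|U_l^\nu|\leq C\e$ this gives $|u^\nu-u_l^\nu|\leq C\sqrt{\e}$ pointwise in a slightly smaller moving half-line, and then $|u_l^\nu-\ovlul|\leq\e/3$ completes the proof of \cref{rare-unidecay-l}.

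The main obstacle is the super-solution step. In the shock case, \cref{est-b} produces a uniform compressive drift $\ovl{a}(x,t)-s\geq\e_1>0$ on the entire left half-line, and it is this definite-sign drift that anchors the exponential comparison. In the rarefaction case no such uniform lower bound is available: $\bar{a}-f'(\ovlul)$ can be arbitrarily close to zero precisely in the region where we want to work. The construction must therefore rely on the parabolic regularization and on the a priori pointwise smallness of $u^\nu-\ovlul$ on the static half-line $\{x\leq-R_\e\}$ from the first step, rather than on any stabilizing drift; a careful choice of a slightly slower propagation speed in the super-solution (at the cost of a loss $e^{-\lambda t}$ that is absorbed into the smallness budget) is what makes the comparison close.
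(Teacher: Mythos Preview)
The paper takes a different and considerably simpler route that bypasses the obstacle you flag; it does not use anti-derivatives at all for this proposition.

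For the upper bound $u^\nu\le\ovlul+\e$, the paper applies the maximum principle directly to
\[
V(x,t)=Me^{\frac{B\e}{2}(x-f'(\ovlul)t)}+\tfrac{\e\nu}{2}+\ovlul-u^\nu(x,t),
\]
where $B>0$ is a lower bound on $f''$ near $\ovlul$. If $V$ had a negative minimum at $(x_0,t_0)$ with $t_0>T$, then $u^\nu(x_0,t_0)>\ovlul+\e\nu/2$, so by strict convexity $f'(u^\nu(x_0,t_0))-f'(\ovlul)>B\e\nu/2$; plugging this into $\nu\p_x^2V-\p_tV-f'(u^\nu)\p_xV$ gives a strictly negative value at $(x_0,t_0)$, a contradiction. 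Hence $V\ge0$, and the bound follows wherever the exponential term is below $\e/2$. The point is that convexity itself supplies the favourable sign precisely where $V$ tries to go negative, so no compressive drift is needed; this is the original Il'in--Ole\v{\i}nik device from \cite{Oleinik1960}.

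For the lower bound $u^\nu\ge\ovlul-\e$, the paper invokes the already-proved shock theorem. It compares $u^\nu$ with the solution $\tilde u$ launched from data equal to $u_0$ on $(-\infty,0)$ and equal to the periodic perturbation of $\ovlul-\e/2$ on $(0,+\infty)$. Since $\ovlul>\ovlul-\e/2$, \cref{Thm-2-periodic} applies to $\tilde u$, which therefore converges uniformly to a viscous shock connecting $\ovlul$ to $\ovlul-\e/2$; in particular $\tilde u\ge\ovlul-\e$ for $t$ large, and $u^\nu\ge\tilde u$ by comparison. No anti-derivative or sub-solution construction is required.

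Your anti-derivative strategy runs into exactly the difficulty you name, and the proposed fix does not close it. Replacing $f'(\ovlul)$ by a slower speed $f'(\ovlul)-\delta$ makes the exponent $\tilde\beta=f'(\ovlul-2\e_0)-(f'(\ovlul)-\delta)$ positive only for $\delta$ bounded below by a fixed positive constant, and then the comparison controls $U_l^\nu$ only on the \emph{smaller} half-line $\{x<(f'(\ovlul)-\delta)t-N\}$, not the one in \cref{rare-unidecay-l}. For the lower bound on $U_l^\nu$, Steps~2--3 of Proposition~\ref{prop-unidecay} rely on the ansatz $\psi_\xi^\nu$ built from a shock profile, which is unavailable in the rarefaction setting; your exponential sub-solution proposal does not address this.
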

\begin{proof}
	We prove only \cref{rare-unidecay-l}, since the proof of \cref{rare-unidecay-r} is similar. 
	
	For any $ \e>0, $ there exists $T>0$ such that $|u_l^\nu-\ovlul|<\e\nu/2, $ for any $x\in \R, t\geq T$. Since $ f $ is strictly convex, there exists $B>0$ such that, for any $0<\sigma<2 \|u_0\|_{L^{\infty}(\R)}$, 
	$$ f'(\ovlul +\sigma)-f'(\ovlul)>B \sigma. $$	
	Without loss of generality, one can assume that $ \e>0 $ is small enough such that $ B\e<2 \beta, $ where $ \beta>0 $ is the constant in \cref{Lem-decay-ulr}.
	
	\vspace{0.2cm}
	
	\textbf{(1) }
	By \cref{Lem-decay-ulr} and that $ \|u^\nu\|_{L^\infty(dx,dt)} \leq \|u_0\|_{L^\infty}, $ one can choose $M>0$ large enough such that 
	$$
	Me^{\frac{B \e}{2} \left( x-f'(\ovlul) T\right)}+\frac{\e\nu}{2}+\ovlul > u^\nu(x,T), \quad x \in \R.
	$$
	Define $$ V(x,t):=Me^{\frac{B \e}{2} (x-f'(\ovlul) t)} + \frac{\e\nu}{2} +\ovlul-u^\nu(x,t). $$
	It can be checked easily that $ V(x,T)>0, $ and
	\begin{equation}\label{aux-v}
	\nu \p_x^2 V-\p_t V -f'(u^\nu) \p_x V 
	=M \frac{B \e}{2} e^{\frac{B \e}{2} \left( x-f'(\ovlul) t\right)} \left( \frac{B \e\nu}{2}+f'(\ovlul)-f'(u^\nu)\right).
	\end{equation}
	If
	$$ \min_{x\in\R,t\geq T} V(x,t) = V(x_0,t_0) <0, $$ by the definition of $V$, one has that $\ovlul-u^\nu(x_0,t_0)<-\frac{\e\nu}{2}, $ and then
	$$
	f'(\ovlul)-f'(u^\nu(x_0,t_0))<f'(\ovlul)-f'(\ovlul+\frac{\e\nu}{2})\leq -\frac{B\e\nu}{2}.
	$$
	Thus \eqref{aux-v} yields that at the point $ (x_0,t_0), $
	$$ \nu \p_x^2 V -\p_t V -f'(u^\nu) \p_x V < 0.
	$$
	Therefore, it follows from the maximum principle (\cite[Lemma~1]{Oleinik1960}) that $ V(x,t) \geq 0 $ for any $ x\in\R, t\geq T. $ Choosing $ N := \frac{2}{B\e}\ln \frac{\e}{2M}, $ one has that for any $ t>T $ and $ x-f'(\ovlul)t<-N, $ it holds that
	\begin{equation}\label{ineq}
	u^\nu(x,t) \leq \ovlul+\frac{\e\nu}{2}+Me^{\frac{B \e}{2} (x-f'(\ovlul) t)} <\ovlul+\e.
	\end{equation}
	
	\textbf{(2) } On the other hand, for the initial data \eqref{ic-2-per} and any $ \e>0, $ one can let $ \tilde{u}(x,t) $ be the unique solution to \eqref{CL} with the $ L^\infty $ initial data
	\begin{equation*}
	\tilde{u}(x,0) =
	\begin{cases}
	u_0(x), \quad & x<0, \\
	\ovlul - \e/2 + \left( u_0(x) - \ovlur \right), \quad & x>0,
	\end{cases}
	\end{equation*}
	which satisfies
	\begin{equation*}
	\begin{cases}
	|\tilde{u}(x,0) - \ovlul - w_{0l}(x)| \leq C_0 e^{\beta_0 x}, & \\
	|\tilde{u}(x,0) - (\ovlul-\e/2) - w_{0r}(x)| \leq C_0 e^{-\beta_0 x}, &
	\end{cases} \quad x\in\R.
	\end{equation*}
	\cref{Thm-2-periodic} implies that as $ t\rightarrow +\infty, $ $ \tilde{u} $ tends to a viscous shock profile $ \tilde{\phi} $ connecting $ \ovlul $ as $ x\rightarrow -\infty $ and $ \ovlul - \e/2 $ as $ x\rightarrow +\infty. $ Thus, there exists $ T>0 $ such that for any $ x\in\R, t>T, $
	\begin{equation}\label{ineq12}
		\tilde{u}(x,t) \geq \tilde{\phi}(x-st) -\e/2 \geq \ovlul - \e.
	\end{equation}
	By $ \tilde{u}(x,0) \leq u_0(x) $ and the comparison principle, one has 
	\begin{equation}\label{ineq11}
		u^\nu(x,t) \geq \tilde{u}(x,t) \geq \ovlul - \e,  \qquad x\in\R, t\geq 0.
	\end{equation}
 	Then \cref{rare-unidecay-l} follows from \cref{ineq} and
	\cref{ineq11}.
\end{proof}

\vspace{0.3cm}

\begin{proof}[\textbf{Proof of Theorem \ref{Thm-rare}}]
It is equivalent to prove that for any $ \e>0, $ there exists $ T>0 $ such that 
\begin{equation}\label{rare-1}
\sup_{x \in \R} | u^\nu(x, t) - u^R(x, t) | < \e,  \qquad t>T.
\end{equation}

For the constants $ N_\e $ and $ T_\e $ in Proposition \ref{prop-rare-unidecay}, one can define two constants
$$
x_0:=N_\e \frac{f'(\ovlul)+f'(\ovlur)}{f'(\ovlul)-f'(\ovlur)} ~\text{ and }~ t_0:=\frac{-2N_\e}{f'(\ovlul)-f'(\ovlur)}>0,
$$
and the region $$\Omega_\e:=\left\lbrace (x,t): f'(\ovlul)t-N_\e <x<f'(\ovlur)t+N_\e,~t > T_\e \right\rbrace.$$
Then the shifted rarefaction wave $\tilde{u}^R(x,t) := u^R(x-x_0, t+t_0) $ satisfies that
\begin{equation*}
\tilde{u}^R(x,t) = 
\begin{cases}
\ovlul & \text{ for } x-f'(\ovlul)t \leq -N_\e, \\
(f')^{-1}(\frac{x-x_0}{t+t_0}) & \text{ for } x \in \Omega_\e, \\
\ovlur & \text{ for } x-f'(\ovlur)t \geq N_\e.
\end{cases}
\end{equation*}
Therefore, Proposition \ref{prop-rare-unidecay} implies that for any $x<f'(\ovlul)t-N_\e ~ \text{or} ~x>f'(\ovlur)t+N_\e, ~ t\geq T_\e$, one has that
\begin{equation}\label{ineq13}
|u^\nu(x,t)-\tilde{u}^R(x,t)|<\e.
\end{equation}
Define
\begin{equation}\label{def-y}
Z(x,t):=(t+t_0)^{\kappa} \left(  u^\nu(x,t)-\tilde{u}^R(x, t)\right),
\end{equation}
where $ 0< \kappa < 1 $ is a constant to be determined. Direct calculations show that
\begin{equation}\label{equ-y}
	\nu \p_x^2 Z-f'(u^\nu)\p_x Z -\p_t Z = \left( f''(v) \p_x \tilde{u}^R -\frac{\kappa}{t+t_0} \right) Z - \nu(t+t_0)^{\kappa} \p_x^2 \tilde{u}^R \quad \text{ in } \Omega_\e,
\end{equation}
where $v$ is the function satisfying
$$f'(u^\nu)-f'(\tilde{u}^R)=f''(v) (u^\nu-\tilde{u}^R).$$

For $ (x,t)\in \Omega_\e, $ it holds that $ f''(v)\p_x \tilde{u}^R = f''(v)\frac{1}{f''(\tilde{u}^R)} \frac{1}{t+t_0}>\frac{2\omega}{t+t_0}$ for some $\omega>0.$ Then $0<\kappa<1$ can be chosen small enough such that 
\begin{equation}\label{ineq14}
f''(v) \p_x \tilde{u}^R -\frac{\kappa}{t+t_0}>\frac{\omega}{t+t_0}>0.
\end{equation}

Note that for any $x\in \R, $ 
$$ | Z(x,T_\e) |\leq M_4 (T_\e+t_0)^{\kappa}, $$ 
provided that $ M_4 > \|u_0\|_{L^\infty} + \|u^R\|_{L^\infty} $ is large enough. 
Due to \eqref{ineq13}, one has that for any $t\geq T_\e,$
$$|Z(x,t)| \leq \e (t+t_0)^{\kappa},$$ 
if $x= f'(\ovlul)t-N_\e $ or $f'(\ovlur)t+N_\e. $
Now, assume that the maximum value
$ \max_{\Omega_\e} Z $ is achieved at $ (x_1,t_1) \in \Omega_\e $ and satisfies that
\begin{equation}\label{ineq15}
Z(x_1,t_1) \geq \max \Big\{ M_4(T_\e+t_0)^{\kappa}, \e(t_1+t_0)^{\kappa}, \frac{M_4}{\omega} (T_\e+t_0)^{\kappa-1} \Big\}>0.
\end{equation}
Then by \eqref{equ-y}--\eqref{ineq15}, one has that at $ (x_1,t_1), $
\begin{align*}
\nu \p_x^2 Z - f'(u^\nu)\p_x Z  -\p_t Z \geq \frac{\omega}{t_1+t_0} \frac{M_4}{\omega} (T_\e+t_0)^{\kappa-1} - \nu (t_1+t_0)^{\kappa} \frac{M_5}{(t_1+t_0)^{2}}>0,
\end{align*}
where $ M_5=\max \left|\left( (f')^{-1}\right)''\right| $ and $ M_4 > M_5 $ is large enough.
Therefore, the maximal principle implies that for any $ (x,t)\in \Omega_\e, $
$$
Z(x,t)\leq \max \Big\{ M_4(T_\e+t_0)^{\kappa}, \e (t+t_0)^{\kappa}, \frac{M_4}{\omega} (T_\e+t_0)^{\kappa-1} \Big\}.
$$
Similarly, one can verify that for any $ (x,t)\in \Omega_\e, $
$$ Z(x,t) \geq \min \Big\{ -M_4(T_\e+t_0)^{\kappa}, -\e (t+t_0)^{\kappa}, -\frac{M_4}{\omega} (T_\e+t_0)^{\kappa-1} \Big\}.
$$
As a result, by choosing a large $ T>T_\e, $ one can get that
\begin{equation}\label{rare-2}
|u^\nu(x,t)-u^R(x-x_0,t+t_0)|\leq \max \Big\{ \frac{M_4(T_\e +t_0)^{\kappa}}{(t+t_0)^{\kappa}}, \e, \frac{M_4}{(T_\e +t_0)^{1-\kappa} (t+t_0)^{\kappa}}  \Big\}\leq \e
\end{equation}
for any $ f'(\ovlul)t-N_\e < x <f'(\ovlur)t + N_\e $ and $t > T. $
Moreover, since $ u^R $ is Lipschitz continuous and $\frac{x}{t}-\frac{x-x_0}{t+t_0}\rightarrow 0$ as $t\rightarrow \infty,$ \eqref{rare-1} follows easily from combining Proposition \ref{prop-rare-unidecay} and \eqref{rare-2}, and thus \cref{Thm-rare} is proved.
\end{proof}

\vspace{2cm}


\appendix

\section{Proof of \cref{Lem-periodic}}

\begin{proof}
	For convenience, we let $ \nu=1 $ and omit the symbol $ \nu. $
	By multiplying $ u-\ovl{u} $ on each side of \eqref{CL} and integrating on $ [0,p], $ it holds that
	\begin{equation}\label{app1}
	\frac{d}{dt} \int_{0}^{p} (u-\ovl{u})^2(x,t) dx + 2 \int_{0}^{p} (\p_x u)^2(x,t) dx =0, \qquad t>0.
	\end{equation}
	By the Poincar\'{e} inequality on $[0,p],$ there exists a constant $ \alpha>0, $ which depends only on $ p, $ such that $$ \int_{0}^{p} (\p_x u)^2(x,t) dx \geq \frac{\alpha}{2} \int_{0}^{p} (u-\ovl{u})^2(x,t) dx. $$
	Then by \eqref{app1}, one has 
	\begin{equation}\label{app2}
	\int_{0}^{p} (u-\ovl{u})^2(x,t) dx \leq C_0 e^{-\alpha t}  \quad \forall t\geq 0,
	\end{equation}
	where $ C_0 = \int_{0}^{p} (u_0-\ovl{u})^2 dx $ depends on $ p $ and $ \|u_0\|_{L^\infty}. $
	
	\vspace{0.2cm}
	
	$ Claim $ 1. For any integer $ k\geq 1, $ $$ \int_{0}^{p} (\p_x^k u)^2(x,t) dx \leq C  \quad \forall  t\geq 1, $$ where $ C $ depends on $ k, p, f$ and $ \|u_0\|_{L^\infty}. $
	
	\vspace{0.2cm}
		
	In fact, for each $ k\geq 0, $ we let $ t_k:=\frac{1}{2}-\frac{1}{k+3}, $ and define smooth functions $ \zeta_k(t): [0, +\infty) \rightarrow [0,1], $ which are non-decreasing and satisfy that for all $ t\geq 0, $
	\begin{equation*}
	\zeta_k(t)=\begin{cases}
	0, & \quad t \in [0,t_k] \\
	1, & \quad t \in [1, +\infty)
	\end{cases} \quad \text{ and } \quad \zeta_k'(t)+\zeta_k(t) \leq B_k \zeta_{k-1}(t),
	\end{equation*}
	where $ B_k>0 $ is a constant depending on $ k; $ see \cref{Fig-cutoff}.
	\begin{figure}[htbp!]
		\scalebox{0.5}{\includegraphics{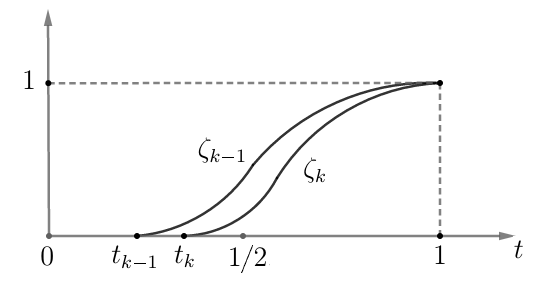}}
		\caption{}
		\label{Fig-cutoff}
	\end{figure}
	
	Then we prove $ Claim $ 1 by the induction method. We will prove that for each $ k\geq 1, $ there exists a constant $ C >0, $ depending on $ k, p, f$ and $ \|u_0\|_{L^\infty}, $  such that
	\begin{equation}\label{ineq_k}
		\int_{0}^{p} \left(\p_x^{k-1}(u-\ovl{u}) \right)^2 dx + \int_{0}^{t} \zeta_{k-1}(\tau) \int_{0}^{p} (\p_x^{k} u)^2 dx d\tau \leq C \quad \forall t>t_k.
	\end{equation} 
	
	In fact, when $ k=1, $ \eqref{ineq_k} follows from \eqref{app1}. Then we assume that \eqref{ineq_k} holds for $ k=1,2,\cdots,m $ with $ m\geq 1, $ and then we will prove that \eqref{ineq_k} also holds for $ k=m+1. $	
	By taking the derivative $ \p_x^m $ in \eqref{CL} and multiplying $ \zeta_m \p_x^m u $ on each side, one can obtain
	\begin{align*}
		& \p_t (\zeta_m(\p_x^m u)^2)-\zeta_m'(\p_x^m u)^2 +\p_x \left( \zeta_m \p_x^m u~ \p_x^m f(u) \right) - \zeta_m \p_x^{m+1} u~ \p_x^m f(u) \\
		 = & \p_x ( \zeta_m \p_x^m u ~\p_x^{m+1} u)-\zeta_m(\p_x^{m+1} u)^2.
	\end{align*}
	This, with the Cauchy-Schwartz inequality, yields that for all $ t>0, $
	\begin{equation}\label{app2_1}
	\begin{aligned}
	& \frac{d}{dt} \int_{0}^{p} \zeta_m(t)(\p_x^m u)^2 dx + \zeta_m(t)\int_{0}^{p} (\p_x^{m+1} u)^2 dx  \\
	\leq~ & C(\zeta_m'+\zeta_m) \sum_{k=1}^{m} \int_{0}^{p} (\p_x^k u)^2 dx \\
	\leq~ & CB_m \zeta_{m-1}(t) \sum_{k=1}^{m} \int_{0}^{p} (\p_x^k u)^2 dx. 
	\end{aligned}	 
	\end{equation}
	Thus one can have that for all $ t>0, $
	\begin{equation}\label{app2_2} 
	\begin{aligned}
	& \int_{0}^{p} \zeta_m(t)(\p_x^m u)^2 dx + \int_0^t \zeta_m(\tau)\int_{0}^{p} (\p_x^{m+1} u)^2 dx d\tau  \\
	\leq~ & C_m \sum_{k=1}^{m} \int_0^t\zeta_{k-1}(\tau) \int_{0}^{p} (\p_x^k u)^2 dx d\tau,
	\end{aligned}	
	\end{equation}
	where $ C_m>0 $ depends on $ f, p$ and $ \|u_0\|_{L^\infty}. $ Then by \eqref{ineq_k} for $ k=1,2,\cdots,m, $ for any $ t>t_m, $ the right hand side of \eqref{app2_2} is bounded by a constant, so \eqref{ineq_k} holds true for $ k=m+1. $ Thus, by the induction method, \eqref{ineq_k} holds true for any $ k\geq 1 $ and any $ t\geq 1, $ which completes the proof of $ Claim $ 1.
	
	\vspace{0.2cm}
	
	Then it follows from Sobolev inequality, $ Claim $ 1, and the equation \eqref{CL} that for any integers $ k,l \geq 0, $
	\begin{equation*}
		\| \p_t^l \p_x^k (u-\ovl{u}) \|_{L^\infty(\R)} \leq C_{kl}  \quad \forall ~t\geq 1.
	\end{equation*}	
	And since for each $ k\geq 0, $ $ \zeta_k(t)=1 $ and $ \zeta_k'(t)=0 $ for all $ t \geq 1, $ \eqref{app1} and \eqref{app2_1} yield that
	\begin{equation}\label{app3}
		\frac{d}{dt} \int_{0}^{p} \left(\p_x^k (u-\ovl{u}) \right)^2 dx + \int_{0}^{p} (\p_x^{k+1} u)^2 dx \leq C \sum_{l=1}^{k} \int_{0}^{p} (\p_x^l u)^2 dx  \quad \forall t\geq 1,
	\end{equation}
	where $ C>0 $ depends on $ k, p, f$ and $ \|u_0\|_{L^\infty}. $
	
	\vspace{0.3cm}
	
	$ Claim $ 2. For each $ k\geq 0, $ there holds that
	\begin{equation}\label{ineq_k2}
		\int_{0}^{p} \left(\p_x^k (u-\ovl{u})\right)^2(x,t) dx \leq C e^{-\alpha t}  \quad \forall t\geq 1,
	\end{equation}
	where $ C>0 $ depends on $ k, p, f$ and $ \|u_0\|_{L^\infty}. $
	
	\vspace{0.2cm}
	
	To prove $ Claim $ 2, we also use the induction method. For $ k=0, $ \eqref{ineq_k2} follows from \eqref{app2}. Thus, one can assume that for $ k=0,1,\cdots,m-1 $ with $ m\geq 1, $ $ Claim $ 2 is true. 
	Then for $ k=m, $ by \eqref{app3} with $ k=m, $ one has that for all $ t\geq 1, $
	\begin{equation}\label{ineq_m}
	\begin{aligned}
		\frac{d}{dt} \int_{0}^{p} (\p_x^m u)^2 dx  & \leq C_m \sum_{k=1}^{m} \int_{0}^{p} (\p_x^k u)^2 dx \\
		& \leq C e^{-\alpha t}  + C_m \int_{0}^{p} (\p_x^m u)^2 dx,
	\end{aligned}
	\end{equation}
	where $ C, C_m >0 $ depend on $ m, p, f$ and $ \|u_0\|_{L^\infty}, $ and $ C_m $ can be large enough such that $ C_m > \alpha. $
	Letting $ k=m-1 $ in \eqref{app3}, one gets that for all $ t\geq 1, $
	\begin{equation}\label{ineq_m-1}
		\frac{d}{dt} \int_{0}^{p} \left(\p_x^{m-1} (u-\ovl{u})\right)^2 dx + \int_{0}^{p} (\p_x^m u)^2 dx \leq C e^{-\alpha t}.
	\end{equation}	
	Then by multiplying $ 2C_m $ on \eqref{ineq_m-1} and then adding it to \eqref{ineq_m}, one can obtain that for all $ t\geq 1, $
	\begin{equation}\label{app33*}
	\begin{aligned}
	\frac{d}{dt} \left[2C_m \int_{0}^{p} \left(\p_x^{m-1} (u-\ovl{u})\right)^2 dx + \int_{0}^{p} (\p_x^m u)^2 dx \right] + C_m \int_{0}^{p} (\p_x^m u)^2 dx \leq C e^{-\alpha t}.
	\end{aligned}	
	\end{equation}
	Denote
	\begin{equation*}
		E_m(t) := 2 C_m \int_{0}^{p} \left(\p_x^{m-1} (u-\ovl{u})\right)^2 dx + \int_{0}^{p} (\p_x^m u)^2 dx. 
	\end{equation*}
	Then \eqref{ineq_k2} with $ k=m-1 $ and \eqref{app33*} yield that for all $ t\geq 1, $
	\begin{equation}\label{app33}
		\begin{aligned}
		E_m'(t) + C_m E_m(t) \leq C_m'  e^{-\alpha t}.
		\end{aligned}
	\end{equation}
	Since $ C_m >\alpha, $ one can easily obtain that
	$ E_m(t) \leq Ce^{-\alpha t}, $ where $ C>0 $ depends on depends on $ m, p, f$ and $ \|u_0\|_{L^\infty}. $ The proof of $ Claim $ 2 is finished.
	
	\vspace{0.3cm}

	Then by Sobolev inequality and $ Claim $ 2, and combined with the equation \eqref{CL}, one can have that for any integers $ k,l \geq 0 $ and $ t\geq 1, $
	\begin{equation*}
		\| \p_t^l \p_x^k (u-\ovl{u}) \|_{L^\infty(\R)} \leq Ce^{-\alpha t},
	\end{equation*}
	which finishes the proof of \cref{Lem-periodic}.
\end{proof}
\vspace{0.5cm}

\section{Proof of \cref{Lem-decay}}
\begin{proof}
	For convenience, we assume that $ \nu=1 $ and omit the symbol $ \nu. $ And let $ K^t(x) := \frac{1}{\sqrt{2\pi t}}e^{-\frac{x^2}{4t}} $ denote the heat kernel.
	
	As in \cite{Freistuhler1998}, the solution $S_tu_0$ can be obtained by constructing the following approximating sequence
	\begin{align*}
	u^{(1)}&=K^t* u_0,\\
	u^{(n+1)}&=K^t* u_0-\int_{0}^{t} \partial_x K^{\tau}(\cdot) * f\big(u^{(n)}(\cdot, t-\tau)\big) d\tau, \qquad n=1,2,3,\ldots
	\end{align*}
	where ``$*$'' represents the convolution operation with respect to the space variable.
	
	
	Suppose that $\{\tilde{u}^{(n)}\}_{n=1}^{\infty}$ is the approximating sequence induced by $\tilde{u}_0$, constructed in the same way as $ u^{(n)}. $ Therefore, one has that
	\begin{align*}
	|u^{(1)}-\tilde{u}^{(1)}|&=|K^t * (u_0-v_0)|\leq \int_{\R} \frac{1}{\sqrt{2\pi t}}e^{-\frac{y^2}{4t}} C e^{\delta(x-y)} dy\\
	&\leq \int_{\R} \frac{1}{\sqrt{2\pi t}}e^{-\frac{1}{4t}(y+2\delta t)^2}C e^{\delta x+\delta^2 t} dy=C e^{\delta x+\delta^2 t},\\
	|u^{(2)}-\tilde{u}^{(2)}|&\leq |K^t * (u_0-\tilde{u}_0)| + \Big| \int_{0}^{t} \partial_x K^\tau(\cdot)  *\Big[f\big(u^{(1)}(\cdot,t-\tau)\big)\\
	&\qquad\qquad\qquad\qquad\qquad\qquad\qquad\quad-f\big(\tilde{u}^{(1)}(\cdot,t-\tau)\big)\Big] d\tau \Big|  \\
	&\leq C e^{\delta x+\delta^2 t} +\int_{0}^{t}\int_{\R} \frac{1}{\sqrt{2\pi \tau}}\frac{|y|}{2\tau}e^{-\frac{y^2}{4\tau}} C_0 C e^{\delta(x-y)+\delta^2(t-\tau)} dy d\tau \\
	&\leq C e^{\delta x+\delta^2 t} + C_0 C e^{\delta x+\delta^2t} \int_{0}^{t}\int_{\R}  \frac{1}{\sqrt{2\pi \tau}}\frac{|y|}{2\tau} e^{-\frac{(y+2\delta \tau)^2}{4\tau}}  dy d\tau \\
	&\leq C e^{\delta x+\delta^2 t}+ C_0 C e^{\delta x+\delta^2t} \int_{0}^{t}\int_{\R}  \frac{1}{\sqrt{2\pi \tau}}\big(\frac{|y|}{2\tau}+|\delta| \big)e^{-\frac{y^2}{4\tau}} dy d\tau \\
	&= Ce^{\delta x+\delta^2 t}+ C_0 C e^{\delta x+\delta^2t} \int_{0}^{t} \big(\sqrt{\frac{2}{\pi \tau}}+|\delta| \big)  d\tau\\
	&\leq C e^{\delta x+\delta^2 t}\left(1+\frac{2\sqrt{2}}{\sqrt{\pi}}C_0\sqrt{t}+C_0|\delta|t\right),\\        
	\ldots,  &        
	\end{align*}
	where $ C_0:=\max\{|f'(u)|:~|u|\leq \|u_0, \tilde{u}_0\|_{L^{\infty}(\R)}\}. $
	By induction, one has that for $ t>0 $ small, 
	\begin{align*}
	|u^{(n)}-\tilde{u}^{(n)}|&\leq C e^{\delta x+\delta^2 t}\Big[1+\big(\frac{2\sqrt{2}}{\sqrt{\pi}}C_0\sqrt{t}+C_0|\delta|t\big)+\cdots+\big(\frac{2\sqrt{2}}{\sqrt{\pi}}C_0\sqrt{t}+C_0|\delta|t\big)^{n-1}\Big]\\
	&\leq C e^{\delta x+\delta^2 t}\frac{1}{1-\big(\frac{2\sqrt{2}}{\sqrt{\pi}}C_0\sqrt{t}+C_0|\delta|t\big)}.
	\end{align*}
	Therefore, by letting $n\rightarrow +\infty$, there exists a small enough $t_0=t_0(|\delta|, C_0)$ with 
	$$ e^{\delta^2 t_0}\frac{1}{1-\big(\frac{2\sqrt{2}}{\sqrt{\pi}}C_0\sqrt{t_0}+C_0|\delta|t_0\big)}<2, $$ 
	such that $|S_t u_0-S_t\tilde{u}_0|\leq 2C e^{\delta x}$ holds for any $x\in \R, t\in (0,t_0]$.
	At time $t=k t_0$, $k=1,2,3,\ldots$, one can take $S_{kt_0}u_0, S_{kt_0}\tilde{u}_0$ instead of $u_0, \tilde{u}_0$ as the initial data and then repeat the same estimates as above in the interval $[kt_0,(k+1)t_0]$. It  concludes that for any $x\in \R, t>0$,  $|S_tu_0-S_t\tilde{u}_0|\leq 2^{ \frac{t}{t_0}+1}C e^{\delta x}$.
\end{proof}

\vspace{0.5cm}

\section{Proof of \cref{Lem-g}}
\begin{proof}
	For convenience, we let $ \nu=1 $ and omit the symbol $ \nu. $
	Integrating the equation \eqref{ode1} shows that the shock profile $ \phi $ satisfies
	$$ \phi' = f(\phi)-f(\ovlul)-s(\phi-\ovlul), $$
	which implies that
	$$
	\phi'=(\phi-\ovlul)\left(\frac{f(\phi)-f(\ovlul)}{\phi-\ovlul}-s \right)=(\phi-\ovlur)\left(\frac{f(\phi)-f(\ovlur)}{\phi-\ovlur}-s \right).
	$$
	thus one has that the function $ g^\nu(x) $ defined in \eqref{def-g} satisfies the equation:
	\begin{equation}\label{def-gprime}
	\begin{aligned}
	g'&=\frac{1}{\ovlul-\ovlur}\Big[f\big((\ovlul-\ovlur)g +\ovlur \big)-f(\ovlur)\Big]-sg \\
	&=\frac{1}{\ovlul-\ovlur}\Big[f\big(\ovlul g +\ovlur(1-g) \big)-f(\ovlul)g -f(\ovlur)(1-g)\Big].
	\end{aligned}
	\end{equation}
	(i). Since $f$ is smooth, for any $x>y, 0\leq \rho \leq 1, z=\rho x+(1-\rho)y$, one has 
	\begin{align*}
	&f(z)-[\rho f(x)+(1-\rho)f(y)]\\
	=&\rho (z-x)\int_{0}^{1} f'\left(\tau z+(1-\tau)x \right) d\tau
	+(1-\rho)(z-y)\int_{0}^{1} f'\left(\tau z+(1-\tau)y \right) d\tau \\
	=&\rho(1-\rho)(x-y)\int_{0}^{1}\int_{0}^{1} (y-x)(1-\tau) \times \\ 
	& \qquad \qquad \qquad \qquad\quad f''\left(\tilde{\tau}[\tau z+(1-\tau)y] + (1-\tilde{\tau})[\tau z+(1-\tau)x]  \right)d\tilde{\tau} d\tau. 
	\end{align*}
	Therefore, 
	\begin{align*}
	\frac{1}{2}\min\limits_{u\in [y,x]}f''(u)\leq -\frac{f(z)-[\rho f(x)+(1-\rho)f(y)]}{\rho(1-\rho)(x-y)^2} \leq \frac{1}{2}\max\limits_{u\in [y,x]} f''(u).
	\end{align*}
	Then \eqref{g1} follows by substituting $x=\ovlul, y=\ovlur, \rho=g $ and $z=\ovlul g +\ovlur(1-g)$, and applying the definition \eqref{def-gprime}.
	\vspace{0.2cm}
	
	(ii). Integrating the equation \eqref{g1} yields 
	\begin{equation*}
	\begin{aligned}
	\beta_1x\leq \ln \frac{1-g(x)}{g(x)} -\beta_3\leq \beta_2 x, &\quad x>0,\\
	\beta_2 x\leq \ln \frac{1-g(x)}{g(x)} -\beta_3\leq \beta_1 x, &\quad x<0,
	\end{aligned}
	\end{equation*}
	where $\beta_3=\ln \frac{1-g(0)}{g(0)}$.
	And then 
	\begin{equation*}
	\begin{aligned}
	\frac{1}{1+e^{\beta_2x+\beta_3}}\leq g(x)\leq \frac{1}{1+e^{\beta_1x+\beta_3}},& \quad x>0,\\
	\frac{e^{\beta_2x+\beta_3}}{1+e^{\beta_2x+\beta_3}}\leq 1-g(x)\leq \frac{e^{\beta_1x+\beta_3}}{1+e^{\beta_1x+\beta_3}}, &\quad x<0.
	\end{aligned}
	\end{equation*}
	Therefore, \eqref{g2} follows, and $C$ depends on $\beta_1, \beta_2 $ and $ \beta_3$.
\end{proof}

\vspace{1.5cm}




\bibliographystyle{amsplain}
\bibliography{conservation_laws}

\providecommand{\bysame}{\leavevmode\hbox to3em{\hrulefill}\thinspace}
\providecommand{\MR}{\relax\ifhmode\unskip\space\fi MR }
\providecommand{\MRhref}[2]{%
  \href{http://www.ams.org/mathscinet-getitem?mr=#1}{#2}
}
\providecommand{\href}[2]{#2}
\begin{thebibliography}{10}

\bibitem{Dafermos1995}
C.~M. Dafermos, \emph{Large time behavior of periodic solutions of hyperbolic
  systems of conservation laws}, Journal of Differential Equations \textbf{121}
  (1995), no.~1, 183--202.

\bibitem{Dafe2016}
\bysame, \emph{Hyperbolic conservation laws in continuum physics}, fourth ed.,
  Grundlehren der Mathematischen Wissenschaften [Fundamental Principles of
  Mathematical Sciences], vol. 325, Springer-Verlag, Berlin, 2016. \MR{3468916}

\bibitem{Dalibard2010}
A.-L. Dalibard, \emph{Long time behavior of parabolic scalar conservation laws
  with space periodic flux}, Indiana Univ. Math. J. \textbf{59} (2010), no.~1,
  257--300.

\bibitem{Dalibard2017}
A.~L. Dalibard and M.~J. Kang, \emph{Existence and stability of planar shocks
  of viscous scalar conservation laws with space-periodic flux}, Journal des
  Mathematiques Pures et Appliquees \textbf{107} (2017), no.~3, 336--366.

\bibitem{Freistuhler1998}
H.~Freist\"{u}hler and D.~Serre, \emph{$l^1$ stability of shock waves in scalar
  viscous conservation laws}, Communications on Pure and Applied Mathematics
  \textbf{51} (1998), no.~3, 291--301.

\bibitem{Friedman1964}
A.~Friedman, \emph{{Partial Differential Equations of Parabolic Type}},
  Prentice-Hall, Inc., Englewood Cliffs, N.J., 1964.

\bibitem{Glimm1970}
J.~Glimm and P.~D. Lax, \emph{Decay of solutions of systems of nonlinear
  hyperbolic conservation laws}, Memoirs of the American Mathematical Society,
  No. 101, American Mathematical Society, Providence, R.I., 1970.

\bibitem{Goodman1986}
J.~Goodman, \emph{Nonlinear asymptotic stability of viscous shock profiles for
  conservation laws}, Archive for Rational Mechanics and Analysis \textbf{95}
  (1986), no.~4, 325--344.

\bibitem{Goodman1989}
\bysame, \emph{Stability of viscous scalar shock fronts in several dimensions},
  Trans. Amer. Math. Soc. \textbf{311} (1989), no.~2, 683--695.

\bibitem{Goodman1992}
J.~Goodman and Z.~Xin, \emph{Viscous limits for piecewise smooth solutions to
  systems of conservation laws}, Archive for Rational Mechanics and Analysis
  \textbf{121} (1992), no.~3, 235--265.

\bibitem{Harabetian1988}
E.~Harabetian, \emph{{Rarefactions and large time behavior for parabolic
  equations and monotone schemes}}, Communications in Mathematical Physics
  \textbf{114} (1988), no.~4, 527--536.

\bibitem{Hoff2000}
D.~Hoff and K.~Zumbrun, \emph{Asymptotic behavior of multidimensional scalar
  viscous shock fronts}, Indiana University Mathematics Journal \textbf{49}
  (2000), no.~2, 427--474.

\bibitem{Hoff2002}
\bysame, \emph{Pointwise green's function bounds for multidimensional scalar
  viscous shock fronts}, J. Differential Equations \textbf{183} (2002), no.~2,
  368--408.

\bibitem{Hopf1950}
E.~Hopf, \emph{The partial differential equation $ u_t + u u_x = \mu u_{xx} $},
  Communications on Pure and Applied Mathematics \textbf{3} (1950), no.~3,
  201--230.

\bibitem{Howard1999}
P.~Howard, \emph{Pointwise green's function approach to stability for scalar
  conservation laws}, Communications on Pure and Applied Mathematics
  \textbf{52} (1999), no.~10, 1295--1313.

\bibitem{Oleinik1960}
A.~M. Il'in and O.~A. Ole\v{\i}nik, \emph{Asymptotic behavior of solutions of
  the cauchy problem for some quasilinear equations for large values of time},
  Matematicheskii Sbornik \textbf{51(93)} (1960), no.~2, 191--216.

\bibitem{Kenig2006}
C.~E. Kenig and F.~Merle, \emph{Asymptotic stability and liouville theorem for
  scalar viscous conservation laws in cylinders}, Communications on Pure and
  Applied Mathematics \textbf{59} (2006), no.~6, 769--796.

\bibitem{Kopell1975}
N.~Kopell and L.~N. Howard, \emph{Bifurcations and trajectories joining
  critical points}, Advances in Mathematics \textbf{18} (1975), no.~3,
  306--358.

\bibitem{Kreiss1988}
H.~O. Kreiss, \emph{Fourier expansions of the solutions of the navier-stokes
  equations and their exponential decay rate}, Analyse math\'{e}matique et
  applications, Gauthier-Villars, Montrouge, 1988, pp.~245--262.

\bibitem{Kruzkov1970}
S~N Kruzkov, \emph{First order quasilinear equations with several independent
  variables}, Mat. Sb. (N.S.) \textbf{81 (123)} (1970), 228--255.

\bibitem{Kuznetsov1976}
N.~N. Kuznetsov, \emph{{Accuracy of some approximate methods for computing the
  weak solutions of a first-order quasi-linear equation}}, USSR Computational
  Mathematics and Mathematical Physics \textbf{16} (1976), no.~6, 105--119.

\bibitem{Lax1957}
P.~D. Lax, \emph{Hyperbolic systems of conservation laws ii}, Communications on
  Pure and Applied Mathematics \textbf{10} (1957), no.~4, 537--566.

\bibitem{Liu1985}
T.-P. Liu, \emph{Nonlinear stability of shock waves for viscous conservation
  laws}, Bullentin (New Series) of the American Mathematical Society
  \textbf{12} (1985), no.~2, 233--236.

\bibitem{Liu1997}
\bysame, \emph{Pointwise convergence to shock waves for viscous conservation
  laws}, Communications on Pure and Applied Mathematics \textbf{50} (1997),
  no.~11, 1113--1182.

\bibitem{Liu1988}
T.-P. Liu and Z.~Xin, \emph{{Nonlinear stability of rarefaction waves for
  compressible Navier- Stokes equations}}, Communications in Mathematical
  Physics \textbf{118} (1988), 451--465.

\bibitem{Matsumura1985}
A.~Matsumura and K.~Nishihara, \emph{On the stability of travelling wave
  solutions of a one-dimensional model system for compressible viscous gas},
  Japan J. Appl. Math. \textbf{2} (1985), no.~1, 17--25.

\bibitem{Nishihara1985}
K.~Nishihara, \emph{A note on the stability of travelling wave solutions of
  burgers' equation}, Japan Journal of Applied Mathematics \textbf{2} (1985),
  no.~1, 27--35.

\bibitem{Oleinik1957}
O.~A. Ole\v{\i}nik, \emph{Discontinuous solutions of non-linear differential
  equations}, Uspehi Mat. Nauk (N.S.) \textbf{12} (1957), no.~3(75), 3--73.

\bibitem{Szepessy1993}
A.~Szepessy and Z.~Xin, \emph{Nonlinear stability of viscous shock waves},
  Archive for Rational Mechanics and Analysis \textbf{122} (1993), no.~1,
  53--103.

\bibitem{Xin}
Z.~Xin, \emph{{Lectures on Hyperbolic Conservation Laws}}, IMS, CUHK.

\bibitem{Xin1988}
\bysame, \emph{{Asymptotic stability of rarefaction waves for $ 2 \times 2 $
  viscous hyperbolic conservation laws}}, Journal of Differential Equations
  \textbf{73} (1988), no.~1, 45--77.

\bibitem{Xin1990}
\bysame, \emph{Asymptotic stability of planar rarefaction waves for viscous
  conservation laws in several dimensions}, Trans. Amer. Math. Soc.
  \textbf{319} (1990), no.~2, 805--820.

\bibitem{Xin2019}
Z.~Xin, Q.~Yuan, and Y.~Yuan, \emph{Asymptotic stability of shock waves and
  rarefaction waves under periodic perturbations for 1-d convex scalar
  conservation laws}, SIAM Journal on Mathematical Analysis \textbf{51} (2019),
  no.~4, 2971--2994.

\bibitem{Yuan2019}
Q.~Yuan and Y.~Yuan, \emph{{On Riemann solutions under different initial
  periodic perturbations at two infinities for 1-d scalar convex conservation
  laws}}, arXiv: 1907.13043 (2019), 1--15.

\end{thebibliography}

\vspace{1cm}

\end{document}